\DeclareMathOperator{\Id}{Id}
\numberwithin{equation}{section}
\newcommand\1{\lower 9pt\hbox{\underbar{}}}
\numberwithin{equation}{section}
\newtheorem*{theorem*}{Main Theorem}
\newtheorem {theorem}{Theorem}[section]
\newtheorem {lemma}[theorem]           {Lemma}
\newtheorem {proposition}[theorem]     {Proposition}
\newtheorem {corollary}[theorem]       {Corollary}
\theoremstyle{definition}
\newtheorem {definition}[theorem]{Definition}
\newtheorem {remark}[theorem]          {Remark}
\newtheorem {example}[theorem]         {Example}
\newcommand{\pr} {\smallskip\noindent{\bf Proof\,\,}}
\newcommand{\R}{\mathbb{R}}
\begin{document}

\title{Contact structures with singularities: from local to global}
\author{Eva Miranda}\address{Eva Miranda,
Laboratory of Geometry and Dynamical Systems, Department of Mathematics, EPSEB, Universitat Polit\`{e}cnica de Catalunya-IMTech
in Barcelona and
\\ CRM Centre de Recerca Matem\`{a}tica, Campus de Bellaterra
Edifici C, 08193 Bellaterra, Barcelona}
\thanks{Eva Miranda and Cédric Oms are partially supported  by the the AEI grant PID2019-103849GB-I00 of MCIN/ AEI /10.13039/501100011033.  Eva Miranda is supported by the Catalan Institution for Research and Advanced Studies via an ICREA Academia Prize 2016 and by the Spanish State
Research Agency, through the Severo Ochoa and Mar\'{\i}a de Maeztu Program for Centers and Units
of Excellence in R\&D (project CEX2020-001084-M). Eva Miranda was supported by a \emph{Chaire d'Excellence} of the \emph{Fondation Sciences Mathématiques de Paris} when this project started and this work
has been supported by a public grant overseen by the French National Research Agency (ANR) as part of the \emph{\lq\lq Investissements d'Avenir"} program (reference:
ANR-10-LABX-0098). This material is based upon work supported by the National Science Foundation under Grant No. DMS-1440140 while the authors were in residence at the Mathematical Sciences Research Institute in Berkeley, California, during the Fall 2018 semester. Cédric Oms is partially supported by the project ANR CoSyDy (ANR-CE40-0014).}
 \email{ eva.miranda@upc.edu}
 \author{ C\'edric Oms}
\address{ C\'edric Oms, BCAM Bilbao Center of Applied Mathematics, Mazarredo Zumarkalea, 14, 48009 Bilbo, Bizkaia}
 \email{coms@bcamath.org}

\subjclass[2000]{53D22,53D25, 37J39, 53D17}

	\begin{abstract}  In this article we introduce and analyze in detail singular contact structures, with an emphasis on $b^m$-contact structures, which are tangent to a given smooth hypersurface $Z$ and satisfy certain transversality conditions. These singular contact structures are determined by the kernel of non-smooth differential forms, called $b^m$-contact forms, having an associated critical hypersurface $Z$. We provide several constructions, prove local normal forms, and study the induced structure on the critical hypersurface. The topology of manifolds endowed with such singular contact forms are related to smooth contact structures via desingularization. The problem of existence of $b^m$-contact structures on a given manifold is also tackled in this paper. We prove that a connected component of a  convex hypersurface of a contact manifold can be realized as a connected component of the critical set of a $b^m$-contact structure. \textcolor{black}{In particular, given an almost contact manifold $M$ with a hypersurface $Z$, this yields the existence of a $b^{2k}$-contact structure on $M$ realizing $Z$ as a critical set}. As a consequence of the desingularization techniques in \cite{gmw1}, we prove the existence of folded contact forms on any almost contact manifold.
	\end{abstract}

	\maketitle
	\section{Introduction}

Contact manifolds have been known for a long time to be the odd-dimensional counterpart to symplectic manifolds. As opposed to symplectic manifolds, contact manifolds are in a way more flexible: Any odd dimensional manifold satisfying an algebraic-topological condition (more precisely almost contact) admits a contact structure  \cite{bem}. The history of contact geometry dates back to Sophus Lie and the study of optics \cite{Geiges}. Contact and symplectic manifolds are closely related and come into the scene as a natural language associated to (Hamiltonian) dynamics. \textcolor{black}{The Hamiltonian dynamics induces an interesting class of vector fields when restricted to special level sets of the Hamiltonian, , namely Reeb vector fields. A particular but  outstanding source of examples of Reeb vector fields is provided by geodesic flows on a Riemannian manifold.}

 %A natural question in the long list of problems in mathematical physics and celestial mechanics is: Can we guarantee the existence of periodic orbits for these systems based on certain properties of the Hamiltonian function? This question can be posed at different levels. In the contact context the Weinstein conjecture asserts that the Reeb vector field of a closed contact manifold has a periodic orbit. There are several Hamiltonian and symplectic relatives of this conjecture such as the Hamiltonian Seifert conjecture or the Conley conjecture about the periodic orbits of Hamiltonian system on a symplectic manifold. The tools of Floer theory allow to obtain refinements of these conjectures and the whole community in Symplectic and Contact geometry has experimented a \emph{golden age period} during the last decades.

In this alluring connection between symplectic and contact manifolds a natural aspect has been neglected: Can we consider \emph{singular forms}?
This is too wild as a question as singularities can be too complicated. However in the last years, a class of singular symplectic forms called $b^m$-symplectic forms has been widely explored by several authors \cite{GMP, gmps, km}. In this article, we do a first step towards studying the local geometry and topology of the odd-dimensional counterpart of those manifolds, called $b^m$-contact manifolds. \textcolor{black}{These structures appear in a collection of problems ranging from  celestial mechanics (as  the so-called restricted planar circular three body problem, see \cite{MO2}) to fluid mechanics (see \cite{MOP}). Geodesic flows on manifolds with boundary and on pseudo-Riemannian manifolds also yield rich sources of examples.}

	Those manifolds constitute a generalization of contact manifolds, where the non-integrability condition is satisfied on a dense subset, but where the distribution is integrable on a hypersurface. Alternatively, this can be thought of as studying contact manifolds with boundary, where the distribution is tangent to the boundary. %Those structures are narrowly related to convex hypersurfaces in contact manifolds initiated by Giroux \cite{Giroux}.

	Those distributions are described as the kernel differential $1$-forms satisfying the usual contact condition away from the hypersurface, but which are singular on that given hypersurface. The language of those non-smooth forms in the case of manifolds with boundary is not new. The notion of $b$-tangent bundle was already introduced by Melrose in \cite{Me} as a framework to study differential calculus on manifolds with boundary. Recently, it regained a lot of attention in the Poisson and symplectic setting. Indeed, in the foundational work of Radko \cite{R}, a certain type of Poisson structures on closed surfaces (\emph{topologically stable Poisson surfaces}) are classified. Later, in \cite{GMP}, it was shown that those Poisson structures can be studied under the eyes of symplectic geometry associating a symplectic form over the $b$-cotangent bundle. Since then many efforts have been united to understand the local and global behavior of this generalization of symplectic manifolds, see for example \cite{BDMOP,FMM,GL} and references therein. This article can be regarded to be the first one to consider an odd-dimensional counterpart of the story.
	
	The investigation of existence of contact structures on any odd-dimensional manifold has a particularly rich history and led to many important developments in the field.  In this article we consider the singular analogue and  provide an answer in our setting by narrowly linking the existence problem of singular contact structures to convex hypersurfaces in contact geometry, thereby shedding new light on the theory of convex surfaces initiated by \cite{Giroux}. 	We also relate the topology of $b^m$-contact manifolds, depending on the parity of $m$, to the one of contact manifolds, or to the  so-called \emph{folded contact} manifolds.

	\textbf{Organization of this article:}
	
We start by \textcolor{black}{outlining some examples that motivate the study of $b$-contact structures.} We then review the basics of $b$-symplectic geometry in Section \ref{b-symp} by  explaining in greater details the construction of the $b$-tangent bundle and the extension of the de Rham exterior derivative. We also include a selection of results in $b$-symplectic geometry that are used in this article.
We then give the main definition of this article, namely the one of $b$-contact manifolds. We prove local normal forms for $b$-contact forms in Section \ref{Darboux}. We will see in Section \ref{bJacobi} that the right framework to study those geometric structures is the one of Jacobi manifolds. The induced structure by the $b$-contact structure on the boundary is explained in Section \ref{structurecriticalset}.
	We continue by explaining the relation with $b$-symplectic geometry in Section \ref{symplectization}. {In Section \ref{sec:othersingularities} we introduce contact structures admitting more general singularities, as for example $b^m$-contact structures, but also structures dual to $b$-contact structures, that we call \emph{folded contact} structures.} The existence of singular contact structures on a prescribed manifold is dealt with in Sections \ref{sec:desingularization} and \ref{sec:existence}: Namely, we explore the relation of $b^m$-contact manifolds to smooth contact structures following the techniques of \cite{gmw1} and proving existence theorems for $b^m$-contact structures on a given manifold. The constructions in Section \ref{sec:desingularization} and \ref{sec:existence} rely strongly on the existence of convex hypersurfaces on contact manifolds but also on the desingularization constructions in \cite{gmw1} and on new \emph{singularization} techniques. {We end the article with a discussion on open problems concerning the dynamical properties of the Reeb vector field associated to $b^m$-contact forms and possible applications to celestial mechanics.}	An appendix on the local analysis of Jacobi manifolds is included.
	
	\textbf{Acknowledgements:} We are grateful to Boris Khesin and  Charles-Michel Marle for several key conversations  during the preparation of this article. We are indebted to the Fondation Sciences Mathématiques de Paris for endowing the first author with a Chaire d'Excellence in 2017-2018 when this adventure started and to the Observatoire de Paris for being a source of inspiration for many of the constructions in this article and for their hospitality during the stay of both authors during the Fall and Winter of 2017-2018.

\section{Motivating Examples}\label{sec:motivating}

\textcolor{black}{As mentioned in the introduction, smooth contact manifolds appear for instance as regular level-sets for a Hamiltonian function $H$ in symplectic manifolds, whenever there is a Liouville vector field transverse to the level-set. These level-sets are called contact-type hypersurfaces. This is especially interesting as the associated Reeb vector field is a reparametrization of the Hamiltonian vector field. Thus understanding the dynamical behavior of the Hamiltonian vector field on the level-set boils down to understanding the dynamics of the Reeb vector field. As was shown in \cite{MO2}, this picture remains true in the set-up of $b$-symplectic manifolds. In this section we list two sources of examples where $b$-contact forms appear, motivating the study of the local structure of those geometric structures with singularities.}

\subsection{The restricted circular planar three body problem}

\textcolor{black}{The restricted circular planar three body problem deals with the modeling of the motion of a small body in the Euclidean $3$-dimensional space, exposed to the gravitational potential of two massive bodies that move in circles around their center of gravity. The motion of the small body  is constraint to the plane spanned by the motion of the two heavy bodies \textcolor{black}{(called primaries in the literature)}. In the last decade, \textcolor{black}{the existence of contact-type hypersurfaces has been proved in this set-up}, see \cite{contactmoon}. This has opened the door for applications of deep results coming from contact topology to this classical problem.}

More precisely, the Hamiltonian of the restricted circular planar three body problem is given by $$ H(q,p,t)= \frac{|p|^2}{2} - U(q,t), (q,p) \in \mathbb R^2\setminus \{q_1(t),q_2(t)\} \times \mathbb R^2, $$ where $U(q,t)$ denotes the gravitational potential $U(q,t)= \frac{1-\mu}{|q-q_1(t)|} + \frac{\mu}{|q-q_2(t)|}$. Here $\mu_1,\mu_2\in (0,1)$ are the relative masses of the two massive bodies \textcolor{black}{ and $q_1(t)$ and $q_2(t)$ stand for the positions of the primaries}.

The McGehee transformation is classically used when studying the manifold at infinity. To describe this change of coordinates, first pass to polar coordinates $(q, p) \mapsto (r,\alpha, P_r, P_\alpha)$, where $q=(r \cos \alpha,r\sin \alpha)$ and change the momenta accordingly $p=(P_r\cos \alpha-\frac{P_\alpha}{r}\sin \alpha,P_r\sin \alpha+\frac{P_\alpha}{r}\cos \alpha)$ to obtain a symplectic change of coordinates. In the coordinates $(r,\alpha,P_r,P_\alpha)$, the McGehee change of coordinates is given by
$$r=\frac{2}{x^2}, x \in \mathbb R^{+}.$$

This is a non-canonical transformation of the configuration space and therefore the canonical symplectic structure of the cotangent bundle is not preserved. However the induced structure is a $b^3$-symplectic structure given by
$$\omega=-\frac{4}{x^3}dx\wedge dP_r+d\alpha \wedge dP_\alpha.$$ As  proved in Theorem 4.4. in \cite{MO2}, positive level-sets of the Hamiltonian function $H$ admit a transverse Liouville vector field and therefore admit an induced $b^3$-contact structure.

\subsection{\textcolor{black}{Geodesic flows revisited: The $b$-geodesic flow and geodesic flows on pseudo-Riemannian manifolds}}

 \textcolor{black}{\textcolor{black}{ The geodesic flow  of  a Riemannian manifold $(M,g)$} is a special case of Hamiltonian flow. As  shown for instance in \cite{Geiges}, Theorem 1.5.2, the geodesic flow can be seen as the Hamiltonian flow  restricted to the level-set $H^{-1}(2)$ on the cotangent bundle $T^*M$ with Hamiltonian given by $H(q,p)=\frac{1}{2}||{p}||^2$, where $q$ are the position variables and $p$ the associated momenta  (the norm here is induced by the dual metric). This is thus equivalent to the Reeb flow on the unit cotangent bundle $S^*M=H^{-1}(2)$, as the vector field that is radial in the momenta is transverse to the level-set $H^{-1}(2)$.}

Whenever the manifold exhibits asymptotically cylindrical ends, it is often convenient to consider its compactification, as is done in Chapter 2 in \cite{Me}. The resulting manifold is a manifold with boundary and the Riemannian metric ceases to be smooth on the boundary, but one obtains what is called a \emph{b-metric}. This is a bundle metric on the $b$-tangent bundle, which will be precisely defined in the next section. The geodesic flow of this $b$-metric can be seen to be the Reeb flow of the $b$-contact manifold $S^*M$. This is explained in further detail in Example \ref{ex:bgeod}.

\textcolor{black}{This rich connection between geometry and bundles can be taken to the pseudo-Riemannian realm. The following results are contained in \cite{KT}.}

\textcolor{black}{Let $M$ be a smooth manifold with a pseudo-Riemannian metric $g$. The geodesic flow in $T^*M$ is the Hamiltonian vector field $X_H$ of $H(q,p)=g(p,p)/2$.
The set of all oriented geodesic lines $\mathcal{L}$ can be decomposed as  ${\mathcal{L}}={\mathcal{L}}_+\cup   {\mathcal{L}}_-\cup  {\mathcal{L}}_0$  with ${\mathcal{L}}_+, {\mathcal{L}}_-, {\mathcal{L}}_0$  the space of oriented non-parameterized space-, time- and light-like geodesics (that is, $H=const>0, <0$ or $=0$, respectively).  The set  ${\mathcal{L}}_0$ is the common boundary of ${\mathcal{L}}_\pm$.}

\textcolor{black}{We recall from \cite{KT} the following:}

\begin{theorem}[Khesin-Tabachnikov]
The manifolds ${\mathcal{L}}_\pm$ carry symplectic structures which are described  from $T^*M$ by Hamiltonian reduction on the level hypersurfaces $H=\pm 1$. The manifold ${\mathcal{L}}_0$ carries a contact structure whose symplectization is the Hamiltonian reduction of the symplectic structure in $T^*M$ (without the zero section)
on the level hypersurface $H=0$.
\end{theorem}

\textcolor{black}{In \cite{KT} the authors prove that in general the symplectic structures on ${\mathcal{L}}_\pm$  do not extend to a Poisson structure. However in the $2$-dimensional case there is a Poisson structure extending it. This structure can be described with the $b$-glasses as a $b$-symplectic form.}

\textcolor{black}{In the case of the Lorentzian plane  with the (pseudo-Riemannian) metric
$ds^2=dxdy$ .
The light-like lines  can be identified as the horizontal and vertical lines. The space-like lines are lines with positive slope and  the time-like  ones have negative slope. The spaces ${\mathcal{L}}_+$ and ${\mathcal{L}}_-$ have two components each.
Let us compute the symplectic form on ${\mathcal{L}}_+$. Take coordinates on the first coordinate quadrant
such that the unit directing vector of a line reads $(e^{-u},e^u),\ u\in \mathbb R$.  Write as $r(e^{-u},-e^u),\ r\in \R$, the perpendicular to the line from the origin. Then $(u,r)$ are coordinates in ${\mathcal{L}}_+$. Symmetrically,  it can be done for ${\mathcal{L}}_-$.}

\textcolor{black}{As proved in \cite{birman}, the symplectic structures can be computed by the following lemma.}

\begin{lemma}  \label{boris}
 The symplectic form $\omega$ on  ${\mathcal{L}}_+$ can be written as  $2 du\wedge dr$, and as $-2 du\wedge dr$ on ${\mathcal{L}}_-$.
 \end{lemma}
This symplectic structure extends to the (singular) $2$-form on $\mathcal{L}$ which is described in Remark 2.8 in \cite{KT}.
 In the coordinates mentioned above the extension form can be explicitly computed. We follow \cite{KT} for the computations.  Use now the standard notation for a line $(1,\epsilon)$. By imposing the equality of the  slope of $(1,\epsilon)$ to the slope of the line $(e^{-u},e^u)$ expressed in the above coordinates the following relation $u=\frac{1}{2}\ln\epsilon$ is obtained. Now let us make the appropriate substitutions in the symplectic form computed in Lemma \ref{boris} and the form
$\omega =2du\wedge dr$  becomes $d\ln\epsilon\wedge dr=\frac{1}{\epsilon}\, d\epsilon\wedge dr$. Thus the form $\omega\to\infty$ as $\epsilon\to 0 $ and the symplectic structure on each of the spaces ${\mathcal{L}}_+$  and ${\mathcal{L}}_-$  goes to infinity
as one approaches the one-dimensional manifold ${\mathcal{L}}_0$.  This structure blows up when it gets close to ${\mathcal{L}}_0$ but its inverse  minutely extends the symplectic structures
across  $\mathcal{L}_0$ to a smooth Poisson structure.

 \textcolor{black}{Both examples, the unit $b$-cotangent bundle and the $b$-symplectic structures, show that geodesic flows are a source of singular examples. We will go back to this pseudo-Riemmannian structures in higher dimensions. In higher dimensions the set $\mathcal{L}$ is not endowed with a Poisson structure but it can be seen as the critical set of a singular contact structure ($b$-contact structures).}

\section{Preliminaries: $b$-Symplectic survival kit}\label{b-symp}
	
	Let $(M^n,Z)$ be a smooth manifold of dimension $n$ with a hypersurface $Z$. In what follows, the hypersurface $Z$ will be called \emph{critical hypersurface} and the couple $(M,Z)$ $b$-manifold. Assume that there exists a global defining function for $Z$, that is $f:M\to \R$ such that $Z=f^{-1}(0)$, $0$ being a regular value of $f$. A vector field is said to be a $b$-vector field\footnote{The letter $b$ stands for \emph{boundary} as introduced by \cite{Me}.} if it is everywhere tangent to the hypersurface $Z$. The space of $b$-vector fields is a Lie sub-algebra of the Lie algebra of vector fields on $M$. A natural question to ask is whether or not there exists a vector bundle such that its sections are given by the $b$-vector fields. A coordinate chart of a neighbourhood around a point $p \in Z$ is given by $\{(x_1,\dots,x_{n-1},f)\}$ and the $b$-vector fields restricted to this neighbourhood form a locally free $C^\infty$-module with basis
	$$(\frac{\partial}{\partial x_1},\dots,\frac{\partial}{\partial x_{n-1}}, f\frac{\partial}{\partial f}).$$
	
	By the Serre--Swan theorem \cite{S}, there exists an $n$-dimensional vector bundle whose sections are given by the $b$-vector fields. We denote this vector bundle by ${^b}TM$, the \emph{$b$-tangent bundle}. \textcolor{black}{Alternatively, the $b$-tangent bundle at $p\in Z$ can be defined, following \cite{Me} (Lemma 2.5) as the set of $b$-vector fields modulo the product of  $b$-vector fields with the ideal of functions vanishing at $p$. This shows that the definition is coordinate independent.}
	
	\textcolor{black}{Given a $b$-vector field $v$, the restriction of $v$ to $Z$ is every tangent to the hypersurface. The restriction of $b$-vector fields to the critical set given by $\Gamma({^b}TM) \to \Gamma(TZ)$ is a morphism of $C^\infty(Z)$-modules and is thus induced by a vector bundle morphism
	\begin{equation}\label{eq:restrictionbbundle}
	{^b}TM|\textcolor{black}{_Z} \to TZ.
	\end{equation}
As shown in \cite{GMP}, Proposition 4, the kernel of Equation (\ref{eq:restrictionbbundle}) is a line-bundle, denoted by $\mathbb{L}_Z$, with a canonical non-vanishing section. In the above coordinates, this canonical non-vanishing section is spanned around the critical hypersurface by the vector field $f\frac{\partial}{\partial f}$. This vector field is called the \emph{normal $b$-vector field}. } \textcolor{black}{It is shown in \cite{GMP}, Proposition 4, that the splitting of Equation \ref{eq:btangentsplitting} does not depend on the choice of the defining function $f$ of the critical set.}

{We hence obtain the splitting at $p\in Z$ given by}
\begin{equation}\label{eq:btangentsplitting}
{^b}T_pM=T_pZ+\mathbb{L}_p
\end{equation}

\begin{remark}\label{rem:vanishing}
\textcolor{black}{For the sake of clarity, we emphasize the following. Note that the normal $b$-vector field does not vanish as a section of ${^b}TM$. However, the $b$-vector field can be viewed as a smooth vector field by the inclusion $\Gamma({^b}TM) \to \Gamma(TM)$. Thus when viewed as a section of the tangent bundle, it does vanish. \textcolor{black}{This will be of importance in Definition \ref{def:singularReeb}}.}
\end{remark}

	We now adopt the classical construction to obtain differential forms for this vector bundle. We denote the dual of this vector bundle by ${^b}T^*M:=({^b}TM)^*$ and call it the \emph{$b$-cotangent bundle}. \textcolor{black}{Away from the critical hypersurface, the $b$-cotangent bundle agrees with the cotangent bundle, ${^b}T^*M=T^*M$, and on the critical set, there is an embedding $T_p^*Z \to {^b}T_p^*M$, $p\in Z$, whose image is given by $\textcolor{black}{E_p}:=\{l\in {^b}T_p^*M | l(w_p)=0\}$, where $w_p$ is the normal $b$-vector field. Given a defining function $f$, the differential form $\frac{df}{f}$ is not well-defined at the critical hypersurface, but the evaluation $\frac{df}{f}(v)$, where $v\in \Gamma({^b}TM)$, extends smoothly to $Z$ and therefore $\frac{df}{f}$ yields a section of ${^b}T^*M$. At the critical hypersurface, the splitting
$${^b}T_p^*M=\big(\frac{df}{f}\big)_p+\textcolor{black}{E_p}$$ holds.} \textcolor{black}{Note that the splitting depends on the choice of the defining function $f$: let $\tilde{f}$ be a different defining function for the critical set $Z$, so $\tilde{f}=g{f}$ for some non-vanishing function $g$. Then $\dfrac{d\tilde{f}}{\tilde{f}}=\dfrac{df}{f}+dg$.}    %before insteqd of E_p I wrote TpZ which is indead not correct.
	
	A $b$-form of degree $k$ is a section of the $k$th exterior wedge product of the $b$-cotangent bundle: $\omega \in \Gamma\big(\Lambda^k({^b}T^*M)\big):= {^b}\Omega^k(M)$. To extend the de Rham differential to an exterior derivative for $b$-forms, we need a decomposition lemma.
	
	\begin{lemma}[Proposition 5 in \cite{GMP}]\label{decomposition}
		Let $\omega \in {^b}\Omega^k(M)$ be a $b$-form of degree $k$. Then $\omega$ decomposes as follows:
		$$ \omega = \frac{df}{f}\wedge \alpha +\beta, \quad \alpha \in \Omega^{k-1}(M),\ \beta \in \Omega^k(M)$$
		\textcolor{black}{and at $p\in Z$, $\alpha_p$ and $\beta_p$ are unique.}
	\end{lemma}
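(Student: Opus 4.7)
The statement is local in nature, so the plan is to first establish the decomposition in a coordinate chart adapted to $Z$ and then globalize via a partition of unity. For a point $p \in Z$, pick coordinates $(x_1,\dots,x_{n-1},f)$ on a neighbourhood $U_p$ of $p$, where $f$ is (the restriction of) the global defining function. By the very construction of the $b$-tangent bundle recalled just before the lemma, the frame $(\partial/\partial x_1,\dots,\partial/\partial x_{n-1},\,f\,\partial/\partial f)$ trivializes ${}^bTM|_{U_p}$, hence its dual frame
$$
\Bigl(dx_1,\dots,dx_{n-1},\,\frac{df}{f}\Bigr)
$$
trivializes ${}^bT^*M|_{U_p}$. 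Consequently, any $b$-form $\omega$ of degree $k$ is, on $U_p$, a smooth $C^\infty(U_p)$-linear combination of wedge products of these basis $1$-forms. Collecting the summands that contain the distinguished factor $df/f$ and those that do not yields the local decomposition
$$
\omega|_{U_p} = \frac{df}{f}\wedge \alpha_p + \beta_p,
$$
with $\alpha_p \in \Omega^{k-1}(U_p)$ and $\beta_p \in \Omega^k(U_p)$ honest smooth forms.

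To globalize, cover $M$ by the open sets $\{U_p\}_{p\in Z}$ together with $V:=M\setminus Z$, and let $\{\rho_p\}\cup\{\rho_V\}$ be a subordinate partition of unity. On $V$ the form $df/f$ is itself smooth, so we may take $\alpha_V:=0$ and $\beta_V:=\omega|_V$, which is a genuine smooth $k$-form on $V$. Because the closed support of $\rho_V$ is contained in the open set $V$, there is a neighbourhood of $Z$ on which $\rho_V$ vanishes identically, hence $\rho_V\beta_V$ extends by zero to a smooth $k$-form on $M$; the analogous extension by zero applies to $\rho_p\alpha_p$ and $\rho_p\beta_p$. Setting
$$
\alpha := \sum_{p} \rho_p\,\alpha_p \in \Omega^{k-1}(M), \qquad
\beta := \rho_V\,\omega + \sum_{p} \rho_p\,\beta_p \in \Omega^k(M),
$$
and using that $\sum_p \rho_p + \rho_V \equiv 1$, one obtains $\omega = \frac{df}{f}\wedge \alpha + \beta$ on all of $M$, as required.

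I do not anticipate a real obstacle here: the content of the lemma is essentially a bookkeeping statement about a local frame of ${}^bT^*M$, and the only subtlety is making sure the globalization does not introduce singularities. That is handled cleanly by insisting that the partition of unity member $\rho_V$ attached to $M\setminus Z$ have support strictly away from $Z$, so that the product $\rho_V\,\omega$ (which a priori only makes sense on $V$) truly defines a smooth form on $M$. Note that the decomposition is not unique (one may shift $\alpha$ by any form proportional to $df$ at the price of altering $\beta$), but the lemma only asserts existence, which is all the above construction requires.
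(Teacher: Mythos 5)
The paper states this lemma as a citation from \cite{GMP} and gives no proof of its own, so there is nothing internal to compare against; your argument (splitting $\omega$ in the local dual frame $dx_1,\dots,dx_{n-1},\frac{df}{f}$ of ${}^bT^*M$ and globalizing by a partition of unity, with the member supported in $M\setminus Z$ handled by extension by zero) is correct and is essentially the standard proof given in the cited reference. Your closing remark on non-uniqueness of the decomposition is also accurate and harmless, since only existence is claimed.
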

\textcolor{black}{Note that $\beta_p$ depends on the choice of the defining function $f$, as the expression $\frac{df}{f}$ also depends on this choice.}
	Equipped with this decomposition lemma, we extend the exterior derivative by putting
	$$d\omega:= \frac{df}{f}\wedge d\alpha+d\beta \quad \text{ for } \omega \in {^b}\Omega^k(M).$$
	
	\textcolor{black}{The right hand side is well defined and agrees with the De Rham differential on $M\setminus Z$. It also extends smoothly over $M$ as a section of $\Lambda^{k+1}({^b}T^*M)$} and is therefore indeed an extension of the usual exterior derivative and it holds that $d^2=0$. \textcolor{black}{Furthermore, we say that a $b$-form at $p\in Z$ is an \emph{honest} $b$-form if $\alpha|_p \neq 0$, where $\alpha$ is as in the decomposition lemma (Lemma \ref{decomposition}). This means that if $\omega_p$ is not an honest $b$-form, then $\omega_p$ is a smooth de Rham form at $p$.}
	
	\begin{definition}
		An even-dimensional $b$-manifold $M^{2n}$ with a $b$-form $\omega \in {^b}\Omega^2(M)$ is $b$-symplectic if $d\omega =0$ and $\omega^n \neq 0$ as element of $\Lambda^{2n}({^b}T^*M)$.
	\end{definition}
	
 {Taking into account the decomposition as in Lemma \ref{decomposition}, the condition $\omega^n \neq 0$ can be written as $\frac{df}{f}\wedge \alpha \wedge \beta^{n-1}\neq 0$.}
	
	Outside of the critical set $Z$, we are dealing with symplectic manifolds. On the critical set, the local normal form of the $b$-symplectic form is given by the following theorem.
	
	\begin{theorem}[$b$-Darboux theorem, Theorem 37 in \cite{GMP}]\label{thm:symplecticbdarboux}
		Let $\omega$ be a $b$-symplectic form on $(M^{2n},Z)$. Let $p\in Z$. Then we can find a local coordinate chart $(x_1,y_1,\ldots,x_n,y_n)$ centered at $p$ such that hypersurface $Z$ is locally defined by $y_1=0$ and
		$${\omega=dx_1\wedge\frac{d y_1}{y_1}+\sum_{i=2}^n dx_i\wedge dy_i.}$$
	\end{theorem}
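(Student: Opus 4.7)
The plan is to reduce the statement to the classical Darboux theorem in two stages: first produce adapted coordinates on the critical hypersurface $Z$ from the cosymplectic structure induced there, and then propagate them transversally via a Moser interpolation carried out in the $b$-category.

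Step 1 (decomposition and induced structure on $Z$). Choose a local defining function $f$ of $Z$ near $p$ and apply Lemma \ref{decomposition} to write
\[
\omega \;=\; \frac{df}{f}\wedge \alpha + \beta,
\]
with $\alpha \in \Omega^1$, $\beta \in \Omega^2$ smooth near $p$. Matching singular and smooth parts in $d\omega = 0$ gives $d\alpha = 0$ and $d\beta = 0$. Expanding $\omega^n$ shows that its singular part is $n\,\frac{df}{f}\wedge\alpha\wedge\beta^{n-1}$, so nondegeneracy as a $b$-form forces $\alpha|_Z\wedge (\beta|_Z)^{n-1}$ to be a volume form on $Z$. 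Hence $(\alpha|_Z, \beta|_Z)$ is a closed cosymplectic structure on the $(2n-1)$-manifold $Z$.

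Step 2 (coordinates on $Z$ and transverse extension). By the standard cosymplectic Darboux theorem, there exist coordinates $(x_1, x_2, y_2, \ldots, x_n, y_n)$ on $Z$ centered at $p$ such that $\alpha|_Z = -dx_1$ and $\beta|_Z = \sum_{i=2}^n dx_i\wedge dy_i$. Extend these functions arbitrarily to a neighborhood of $p$ in $M$ and set $y_1 := f$; then $(x_1, y_1, x_2, y_2, \ldots, x_n, y_n)$ is a chart near $p$ in which $Z = \{y_1=0\}$, and the $b$-symplectic form $\omega$ already agrees, along $Z$, with the target model
\[
\omega_0 \;:=\; dx_1\wedge\frac{dy_1}{y_1} + \sum_{i=2}^n dx_i\wedge dy_i.
\]

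Step 3 ($b$-Moser interpolation). Consider $\omega_t := (1-t)\omega_0 + t\omega$ for $t\in[0,1]$. Since $\omega_t|_Z = \omega|_Z$ is nondegenerate as a $b$-form, $\omega_t$ remains $b$-symplectic on a neighborhood of $p$ for all $t$. A relative $b$-Poincaré lemma produces a $b$-$1$-form $\mu$ with $d\mu = \omega - \omega_0$ and $\mu$ vanishing on $Z$. The equation $\iota_{X_t}\omega_t = -\mu$ then uniquely determines a $b$-vector field $X_t$ (which is automatically tangent to $Z$ and vanishes along it). Its time-one flow $\varphi$ is defined on a neighborhood of $p$, preserves $Z$, and satisfies $\varphi^*\omega = \omega_0$; composing the chart of Step 2 with $\varphi$ yields the claimed coordinates.

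The main obstacle is Step 3: one must ensure that the Moser primitive $\mu$ can be chosen within the $b$-category and vanishing on $Z$, so that the resulting Moser vector field is a genuine $b$-vector field whose flow preserves $Z$ and integrates on a full neighborhood of $p$. Both ingredients—the relative $b$-Poincaré lemma and the persistence of $b$-nondegeneracy along the interpolation—are precisely the technical input that the $b$-tangent bundle formalism is designed to supply, and they are what distinguishes the argument from the usual smooth Darboux theorem.
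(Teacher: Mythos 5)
Your argument is correct and follows the Moser-path route that the paper itself names as the first of the two standard proofs of this theorem (which it states with a citation to \cite{GMP} rather than reproving; the alternative it sketches is to dualize to a $b$-Poisson bivector and invoke Weinstein's splitting theorem with the transversality condition). The only imprecision is in Step 1: closedness of $\omega$ only forces $d(\alpha|_Z)=0$ and $d(\beta|_Z)=0$ rather than $d\alpha=0$ and $d\beta=0$ on a whole neighbourhood, since the decomposition is not unique away from $Z$ — but the restrictions to $Z$ are all that the cosymplectic Darboux step and the subsequent Moser interpolation require.
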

	
	The $b$-Darboux theorem for $b$-symplectic forms has been proved using two different approaches. The first proof follows Moser path method, that can be adapted in the $b$-setting. Another way of proving it is to show that a $b$-form of degree $2$ on a $2n$-dimensional $b$-manifold is $b$-symplectic if and only if its dual bi-vector field is a Poisson vector field $\Pi$ whose maximal wedge product is transverse to the zero section of the vector bundle $\Lambda^{2n}({^b}TM)$, that is $\Pi^n \pitchfork 0$. A Poisson manifold satisfying this condition is called a \emph{$b$-Poisson manifold}. Using the transversality condition in Weinstein's splitting theorem (see \cite{Weinstein}), one sees that the Poisson structure is of the form
	\begin{equation}
	\Pi = y_1\frac{\partial}{\partial x_1}\wedge
	\frac{\partial}{\partial y_1}+\sum_{i=2}^n \frac{\partial}{\partial x_i}\wedge
	\frac{\partial}{\partial y_i}.
	\end{equation}
	
	Furthermore, Weinstein splitting theorem implies that the critical set of a $b$-symplectic manifold is a regular codimension one foliation of symplectic leaves. Even better, it is proved in \cite{GMP} that the critical set is a cosymplectic manifold\footnote{A cosymplectic manifold is manifold $M^{2n+1}$ together with a closed one-form $\eta$ and a closed two-form $\omega$ such that $\eta\wedge\omega^n$ is a volume form.}.
	
 {Higher order singularities were considered in \cite{Scott}, called $b^m$-symplectic forms (see Section \ref{sec:othersingularities}).} The relation of $b^m$-symplectic manifolds to symplectic manifolds and the less well-known folded  symplectic manifolds was investigated in \cite{gmw1}.
	\begin{theorem}[Theorems 3.1 and  5.1 in \cite{gmw1}]\label{symplecticdesingularization}
		{ Let $\omega$ be a} $b^m$-symplectic structure  on a manifold $M$  and let $Z$ be its critical hypersurface.
		\begin{itemize}
			\item If $m=2k$, there exists  a family of symplectic forms ${\omega_{\epsilon}}$ which coincide with  the $b^{m}$-symplectic form
			$\omega$ outside an $\epsilon$-neighborhood of $Z$ and for which  the family of bivector fields $(\omega_{\epsilon})^{-1}$ converges in
			the $C^{2k-1}$-topology to the Poisson structure $\omega^{-1}$ as $\epsilon\to 0$ .
			\item If $m=2k+1$, there exists  a family of folded symplectic forms ${\omega_{\epsilon}}$ which coincide with  the $b^{2k+1}$-symplectic form
			$\omega$ outside an $\epsilon$-neighborhood of $Z$.
		\end{itemize}
	\end{theorem}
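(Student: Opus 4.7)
The plan is to work in a tubular neighborhood of $Z$, replace the singular ``radial'' factor of $\omega$ by a smooth approximation, and show that the resulting form is (i) smooth and closed, (ii) of the correct type (symplectic for $m$ even, folded symplectic for $m$ odd), and (iii) equal to $\omega$ outside an $\epsilon$-neighborhood of $Z$. I would begin by invoking the $b^m$-analogue of the $b$-Darboux theorem to put $\omega$ in a tubular neighborhood $U$ of $Z$ into the local model
\begin{equation*}
\omega = \frac{df}{f^m}\wedge \alpha + \beta,
\end{equation*}
where $f$ is a defining function of $Z$ and $\alpha,\beta$ are smooth forms; the nondegeneracy condition on $\omega$ then becomes $\alpha\wedge\beta^{n-1}|_Z\neq 0$.

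Next, I would choose a one-parameter family of smooth functions $\chi_\epsilon\colon\R\to\R$ with $\chi_\epsilon(t)=1/t^m$ for $|t|\geq \epsilon$, and set
\begin{equation*}
\omega_\epsilon := \chi_\epsilon(f)\,df\wedge \alpha + \beta.
\end{equation*}
This $\omega_\epsilon$ is manifestly smooth and closed, and equals $\omega$ outside the $\epsilon$-neighborhood of $Z$. The parity of $m$ dictates the choice of $\chi_\epsilon$ near $t=0$: when $m=2k$ is even, $1/t^m>0$, so $\chi_\epsilon$ can be taken smooth and strictly positive, in which case $\omega_\epsilon^n$ is a nowhere-vanishing volume form and $\omega_\epsilon$ is genuinely symplectic; when $m$ is odd, $1/t^m$ changes sign at the origin, so $\chi_\epsilon$ must vanish somewhere in $(-\epsilon,\epsilon)$, and choosing $\chi_\epsilon$ odd and vanishing transversely at $0$ forces $\omega_\epsilon^n$ to vanish transversely along $Z$ while the pull-back to $Z$ still has maximal rank $2n-2$, which is precisely the defining property of a folded symplectic form.

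To globalize the construction I would use a collar of $Z$ together with the observation that $df/f^m$ is a canonical section of the $b^m$-cotangent bundle modulo smooth forms, so that the replacement $df/f^m\rightsquigarrow \chi_\epsilon(f)\,df$ is coordinate-invariant. For the $C^{2k-1}$-convergence of $(\omega_\epsilon)^{-1}$ to the Poisson bivector $\omega^{-1}$ in the even case it then suffices to require that the first $2k-1$ derivatives of $\chi_\epsilon$ match those of $1/t^m$ at $t=\pm\epsilon$, since outside the $\epsilon$-collar the bivectors already coincide. The main obstacle I would expect is the global well-definedness of the replacement: one must verify that the outcome is independent of the choices of $f$ and of the identification with the local model, which reduces to the observation that a change of defining function $f\mapsto gf$ with $g$ smooth and positive alters $df/f^m$ only by strictly lower-order singular terms that can be absorbed into $\alpha$ and $\beta$. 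Producing $\chi_\epsilon$ with the prescribed derivative-matching at $t=\pm\epsilon$ is then a routine bump-function construction.
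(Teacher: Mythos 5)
This theorem is not proved in the present paper: it is quoted verbatim from \cite{gmw1}, and the only place the construction is carried out here is Section \ref{sec:desingularization}, where the same replacement of $\frac{dz}{z^m}$ by $df_\epsilon$ is performed for contact forms. Your outline does follow the strategy of \cite{gmw1} (smooth the radial factor in a collar; the parity of $m$ decides symplectic versus folded), but three steps that you declare ``manifest'' or ``routine'' are precisely where the work lies, and as written each one has a gap.

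First, closedness of $\omega_\epsilon=\chi_\epsilon(f)\,df\wedge\alpha+\beta$ is not automatic: $d\omega_\epsilon=-\chi_\epsilon(f)\,df\wedge d\alpha+d\beta$, while closedness of $\omega$ only gives $\frac{df}{f^m}\wedge d\alpha=d\beta$, so you still need $\bigl(\chi_\epsilon(f)-f^{-m}\bigr)\,df\wedge d\alpha=0$, i.e.\ $df\wedge d\alpha=0$ on the collar. This is supplied by Scott's semi-global Laurent decomposition ($\alpha=\sum_{i}z^{i}\pi^*\alpha_i$ with each $\alpha_i$ a closed form on $Z$), not by the $b^m$-Darboux theorem, which is a pointwise normal form and does not by itself produce a tubular-neighbourhood model. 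Second, strict positivity of $\chi_\epsilon$ does not make $\omega_\epsilon$ symplectic: $\omega_\epsilon^n=n\chi_\epsilon(f)\,df\wedge\alpha\wedge\beta^{n-1}+\beta^n$, and the bounded term $\beta^n$ can cancel the first one wherever $\chi_\epsilon$ is merely of order one. This is why \cite{gmw1} (and Theorem \ref{thm:desingularization} in this paper) use the rescaled family $f_\epsilon(x)=\epsilon^{-(2k-1)}f(x/\epsilon)$, which forces $f_\epsilon'$ to be of order $\epsilon^{-2k}$ on $[-\epsilon,\epsilon]$ so that the singular term dominates for $\epsilon$ small enough; the same domination argument is what confines the vanishing of $\omega_\epsilon^n$ to $Z$ (and makes it transverse) in the odd case. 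Third, the $C^{2k-1}$-convergence is a statement about $\epsilon\to 0$ \emph{inside} the collar; matching derivatives of $\chi_\epsilon$ with those of $t^{-m}$ at $t=\pm\epsilon$ is automatic once $\chi_\epsilon$ is smooth and equals $t^{-m}$ outside $[-\epsilon,\epsilon]$, and it only guarantees smoothness of each individual $\omega_\epsilon$. What must actually be estimated is the $C^{2k-1}$-norm of $1/f_\epsilon'-z^{2k}$ on $[-\epsilon,\epsilon]$, which the scaling above makes $O(\epsilon)$. With these three points supplied, your argument becomes the proof of \cite{gmw1}.
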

	We say that $(M,\omega_\epsilon)$ is the $f_\epsilon$-desingularization of $(M,\omega)$.
	A direct consequence of this theorem is that any orientable manifold admitting a $b^{2k}$-symplectic structure admits a symplectic structure.

	\section{$b$-Contact manifolds}\label{b-contact}
	In this section we introduce the main objects of this article. Inspired by the definition of $b$-symplectic manifolds, we define the contact case as follows:
	
	\begin{definition}
		Let $(M,Z)$ be a (2n+1)-dimensional $b$-manifold. A $b$-contact structure is the distribution given by the kernel of a $b$-form of degree $1$, $\xi=\ker \alpha \subset {^b}TM$, $\alpha \in {^b\Omega^1(M)}$, that satisfies $\alpha \wedge (d\alpha)^n \neq 0$ as a section of $\Lambda^{2n+1}(^bT^*M)$. We say that $\alpha$ is a $b$-contact form and the pair $(M,\xi)$ a $b$-contact manifold.
	\end{definition}
	
	The hypersurface $Z$ is called \emph{critical hypersurface}. In what follows, we always assume that $Z$ is non-empty. Away from the critical set $Z$ the $b$-contact structure is  a smooth contact structure. The former definition fits well with what is standard in contact geometry where coorientable contact manifolds are considered (i.e. there exists a defining contact form with kernel given contact structure).
	
	\begin{example}\label{extendedphasespace}
		Let $(M,Z)$ be a $b$-manifold of dimension $n$. Let $z,y_i, i=2,\dots,n$ be the local coordinates for the manifold $M$ on a neighbourhood of a point in $Z$, with $Z$ defined locally by $z=0$ and $x_i, i=1,\dots,n$ be the fiber coordinates on $^{b}T^*M$, then the Liouville one-form is given in these coordinates by
		$$x_1\frac{dz}{z}+\sum_{i=2}^{n}x_idy_i.$$
		The bundle $\R\times{^{b}T^*M}$ is a $b$-contact manifold with $b$-contact structure defined as the kernel of the one-form
		$$dt+x_1\frac{dz}{z}+\sum_{i=2}^{n}x_idy_i,$$
		where $t$ is the coordinate on $\R$. The critical set is given by $\tilde{Z}=Z\times \R$. Using the definition of the extended de Rham derivative, one checks that $\alpha\wedge (d\alpha)^n \neq 0$. Away from $\tilde{Z}$, $\xi = \ker \alpha $ is a non-integrable hyperplane field distribution, as in usual contact geometry. On the critical set however, $\xi$ is tangent to $\tilde{Z}$. This comes from the definition of $b$-vector fields. As a smooth distribution, that is a viewing $\xi$ as a distribution of $TM$, the rank can drop by $1$ on $\tilde{Z}$, and hence we cannot say that $\xi$ defines a smooth hyperplane field in $TM$.
	\end{example}
	
	As we will see in the next example, the rank does not necessarily drop.
	
	\begin{example}\label{singularReebexample}
		Let us take $\R^{2n+1}$ with coordinates $(z,x_1,\dots,x_n,y_1,\dots,y_n)$. We consider the distribution of the kernel of $\alpha=\frac{dz}{z}+\sum_{i=1}^n x_i dy_i$. The critical set is given by $z=0$ and the rank does not drop on the critical set: on the critical set, the distribution is spanned by $\{\frac{\partial}{\partial x_i},\frac{\partial}{\partial y_i}, i=1,\dots n\}$.
	\end{example}
	
	Using the two last examples and a generalization of Moebius transformations, we can construct $b$-contact structures on the unit ball with critical set given by the unit sphere minus a point.
	
	\begin{example}\label{unitball}
		Let us denote the unit ball of dimension $n$ by $D^n$ and the half-space, that is $\R^n$ where the first coordinate is positive, by $\R^n_+$. The Moebius transformation maps the open half-space diffeomorphically to the open $2$-disk minus a point by the following map:
		\begin{align*}
		\Phi: \{z\in \mathbb{C}| \Re(z)>0\} &\to D^2 \setminus \{(1,0)\} \\
		z &\mapsto \frac{z-1}{z+1}.\\
		\end{align*}
		This map can easily be generalized to all dimensions and the inverse is given by
		\begin{align*}
		\Psi: D^{n} \setminus \{(1,0,\dots,0)\} &\to \R^n_+ \\
		(x_1,\dots,x_{n})&\mapsto \frac{1}{(x_1-1)^2+\sum_{i=2}^{n}x_i^2}\big( 1-\sum_{i=1}^{n}x_i^2,2x_2,\dots,2x_{n}\big).
		\end{align*}
		We now endow $\R^{2n+1}_+$ with the $b$-contact structures described in Example \ref{extendedphasespace} (respectively \ref{singularReebexample}) and pull-back the $b$-contact form. We obtain hence two different $b$-contact structures on the unit ball minus a point and the critical set is given by the unit sphere $S^{2n-2}$ minus the point $(1,0,\dots,0)$.
	\end{example}

	It is not possible to compactify this example by adding the point. This can be seen when computing the hyperplane distribution of the pushforward under $\Phi$. Alternatively, this follows as we will see from Corollary \ref{corHam3dim}. However, we will see that the $3$-sphere does admit a $b$-contact structure, induced by a $b$-symplectic structure, see Example \ref{example:S3}.

	\begin{example}\label{compactexample}
		A compact example admitting a $b$-contact structure is given by $S^2 \times S^1$. Let us consider the $2$-sphere $S^2$, with coordinates $(\theta,h)$ where $\theta\in [0,2\pi]$ is the angle and $h\in [-1,1]$ is the height, and the $1$-sphere $S^1$ with coordinate $\varphi \in [0,2\pi]$. Then $(S^2 \times S^1,\alpha=\sin \varphi d \theta + \cos \varphi \frac{dh}{h})$ is a $b$-contact manifold. Once more, the rank on the critical set changes when $\cos \varphi =0$, where instead of a plane-distribution, we are dealing with a line distribution.
	\end{example}
	
	\begin{example}[Non-orientable example]\label{ex:nonorientable}
	 Contact manifolds with a coorientable contact structure are always orientable as $\alpha\wedge (d\alpha)^n$ is a volume form. There are $b$-contact forms on non-orientable manifolds. Consider the example of the $b$-contact form on the $3$-torus given by $(\mathbb{T}^2\times S^1,\alpha=\cos\theta \frac{dx}{\sin 2\pi x}+\sin \theta dy)$. Consider the group  $\mathbb{Z}/2\mathbb{Z}$ that acts on $(x,y)\in \mathbb{T}^2$ by $\Id \cdot (x, y) = (x, y)$ and $-\Id \cdot (x, y) = (1-x, y)$. Its quotient space is the Klein bottle. The $b$-contact form is invariant under the action of the group and therefore descends to $\mathbb{K}\times S^1$ where $\mathbb{K}$ is the Klein bottle. The manifold $\mathbb{K}\times S^1$ is of course non-orientable.
	\end{example}

	\begin{example}[Product examples] Let $(N^{2n+1}, \alpha)$ be a $b$-contact manifold and let $(M^{2m}, d\lambda)$ be an exact symplectic manifold, then $(N\times M, \alpha+\lambda)$ is a $b$-contact manifold. It is easy to check that $\tilde{\alpha}=\alpha+\lambda$ satisfies $\tilde{\alpha}\wedge (d\tilde{\alpha})^{n+m}\neq 0$.
		
		In the same way  if $(N^{2n+1}, \alpha)$ is a contact manifold and  $(M^{2m}, d\lambda)$ an exact $b$-symplectic manifold (where exactness is understood in the $b$-complex), then $(N\times M, \alpha+\lambda)$ is a $b$-contact manifold.
		These product examples can even be endowed with additional structures such as group actions or integrable systems. For instance we can produce examples of toric $b$-contact manifolds combining  the product of toric contact manifolds in \cite{lerman} with (exact) toric $b$-symplectic manifolds (see \cite{gmps}).
		We can also combine the techniques in \cite{km} for $b$-symplectic manifolds and \cite{Boyer} (among others) for contact manifolds to produce examples of integrable systems on these manifolds.
		
	\end{example}
	
{Besides these examples, more geometric in nature, $b$-contact structures appear when performing regularization techniques in well-known problems in celestial mechanics. More precisely, in \cite{MO2}, positive energy level-sets of the Hamitonian are shown to admit $b^3$-contact structures (see Section \ref{sec:othersingularities} for the definition of $b^3$-contact structures).}
	
	\section{The $b$-contact Darboux theorem} \label{Darboux}
	
	In usual contact geometry, the Reeb vector field $R_\alpha$ of a contact form $\alpha$ is given by the equations
	$$\begin{cases}
	\iota_{R_\alpha}d\alpha=0 \\ \alpha(R_\alpha)=1.
	\end{cases}$$
	In the case where we change the tangent bundle by ${^b}TM$, the existence is given by the same reasoning: $d\alpha$ is a bilinear, skewsymmetric $2$-form on the space of $b$-vector fields $^bTM$, hence the rank is an even number. As $\alpha \wedge (d\alpha)^{n}$ is non-vanishing and of maximum degree, the rank of $d\alpha$ must be $2n$, its kernel is $1$-dimensional and $\alpha$ is non-trivial on that line field. So a global vector field is defined by the normalization condition.
	
	By the same reasoning, we can define the $b$-contact vector fields: for every function $H \in C^\infty(M)$, there exists a unique $b$-vector field $X_H$ defined by the equations
	\begin{equation}\label{eq:Hamiltonianvf}
	\begin{cases}
	\iota_{X_H}\alpha=H \\
	\iota_{X_H}d\alpha=-dH+R_\alpha(H)\alpha.
	\end{cases}
	\end{equation}
	
	A direct computation yields that in Example \ref{extendedphasespace}, the Reeb vector field is given by $\frac{\partial}{\partial t}$. In Example \ref{singularReebexample}, the Reeb vector field is given by $z\frac{\partial}{\partial z}$ and \textcolor{black}{hence it vanishes when viewed as a smooth vector field on $Z$. We emphasize that $z\frac{\partial}{\partial z}$ does not vanish as a section of ${^b}TM$, see also Remark \ref{rem:vanishing}.} \textcolor{black}{In order to state the local classification of $b$-contact forms, we introduce the following definitions.}
	
	\begin{definition}\label{def:singularReeb}
	\textcolor{black}{We say that the Reeb vector field $R_\alpha$ associated to a $b$-contact form $\alpha$ is singular at a point $p\in Z$ if $i(R_\alpha)_p=0$, where $i:\Gamma({^b}TM) \to \Gamma(TM)$.}
		\end{definition}
		
		\textcolor{black}{Similarly, the $b$-contact distribution given by the kernel of a $b$-contact form defines a distribution of codimension $1$ in ${^b}TM$, however the image under the inclusion $i$ of the vector fields that span the kernel of $\alpha$ may vanish.}

\begin{definition}\label{def:singularcontactstructure}
The $b$-contact distribution $\xi=\ker \alpha$ is said to be singular at $p\in Z$ if the rank of the distribution drops by one at $p$ when viewed as a smooth distribution. In other words it is singular if there exists $v \in \xi$ such that $ \Gamma({^b}T_pM) \ni v_p\neq 0$ but \textcolor{black}{$i(v_p)=0$}, where
	\begin{equation}\label{eq:binclusion}
		i:\Gamma({^b}TM) \to \Gamma(TM)
	\end{equation}
 denotes the inclusion. If $\xi_p$ is not singular, it is said to be \emph{regular} at $p$.
\end{definition}

\begin{example}
\textcolor{black}{Consider the $b$-contact form $dx+y\frac{dz}{z}$ on $\mathbb{R}^3$. The $b$-vector field $y\frac{\partial }{\partial x}-z\frac{\partial}{\partial z}$ belongs to the $b$-contact distribution is non-vanishing as a $b$-vector field but zero as a smooth vector field on the critical set intersecting $\{y=0\}$. The $b$-contact distribution $\ker \alpha$ is singular at $(x,0,0)$.}
\end{example}
	
	We now prove a Darboux theorem for $b$-contact forms. We will see that, roughly speaking, the Reeb vector field locally classifies $b$-contact structures.  The proof follows the one of usual contact geometry as in \cite{Geiges}. More precisely, it makes use of Moser's path method. There are two differences from the standard Darboux theorem: the first one is that there exist two local models, depending on whether or not the Reeb vector field is vanishing on the critical set $Z$. The second one is that in the case where the Reeb vector field is singular, the local expression of the contact form only holds \textcolor{black}{at the point $p\in Z$ and not in a local neighbourhood around $p$}, see for instance Example \ref{Exampleonepointsingular}. Furthermore, in this case, \textcolor{black}{the expression of the $b$-contact form at $p$ holds only up to multiplication of a non-vanishing function. }The proof is not following Moser's path method in this case as the flow of the Reeb vector field is stationary.
	
	\begin{theorem}\label{contactDarbouxthm}
		Let $\alpha$ be a $b$-contact form inducing a $b$-contact structure $\xi$ on a $b$-manifold $(M,Z)$ of dimension $(2n+1)$ and $p\in Z$. We can find a local chart $(\mathcal{U},z,x_1,y_1,\dots,x_n,y_n)$ centered at $p$ such that on $\mathcal{U}$ the hypersurface $Z$ is locally defined by $z=0$ and
		
		\begin{enumerate}
			\item if $i(R_p) \neq 0$, \textcolor{black}{where $i$ denotes the inclusion given by Equation \ref{eq:binclusion}},
			\begin{enumerate}
				\item  and $\xi_p$ is singular, then $$\alpha|_\mathcal{U}=dx_1 +y_1\frac{dz}{z}+ \sum_{i=2}^n x_i dy_i,$$
				\item  and $\xi_p$ is regular, then $$\alpha|_\mathcal{U}=dx_1 +y_1\frac{dz}{z}+\frac{dz}{z}+ \sum_{i=2}^n x_i dy_i,$$
			\end{enumerate}
			\item if  $i(R_p)=0$, then $\tilde{\alpha}=f\alpha$ for $f(p)\neq 0$, where $$\tilde{\alpha}_p=\frac{dz}{z} +  \sum_{i=1}^n x_i dy_i.$$
		\end{enumerate}
	\end{theorem}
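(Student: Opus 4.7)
My plan is to split the argument into two regimes according to whether the smooth Reeb vector field $R_\alpha$ vanishes at $p$: when $R_p\neq 0$ (cases 1a and 1b) the flow box of $R_\alpha$ drives a direct construction in the spirit of the usual contact Darboux theorem \cite{Geiges}, while when $R_p=0$ (case~2) the Reeb flow is stationary and the result must be pointwise, obtained by pure linear algebra after a rescaling of $\alpha$.

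For case~1, since $R_\alpha$ is a $b$-vector field it is automatically tangent to $Z$, so the flow-box theorem at $p$ produces coordinates $(x_1,z,y_1,x_2,\ldots,y_n)$ centered at $p$ in which $R_\alpha=\partial/\partial x_1$ and $Z=\{z=0\}$. On the transverse slice $N=\{x_1=0\}$, the identity $L_{R_\alpha}\alpha=0$ shows that $d\alpha$ is basic, and the $b$-contact condition together with $\alpha(R_\alpha)=1$ forces $(d\alpha|_N)^n\neq 0$, making $d\alpha|_N$ a $b$-symplectic form. I would then apply the $b$-Darboux theorem on $N$ to normalize $d\alpha|_N=dy_1\wedge \tfrac{dz}{z}+\sum_{i=2}^n dx_i\wedge dy_i$. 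A canonical $b$-primitive is $\alpha_0:=y_1\tfrac{dz}{z}+\sum_{i=2}^n x_i\,dy_i$; the difference $\alpha|_N-\alpha_0$ is closed, and by the $b$-Poincaré lemma it has the local form $df+c\,\tfrac{dz}{z}$ for some smooth $f$ (with $f(p)=0$ after subtracting a constant) and a constant $c$. The shear $x_1\mapsto x_1+f$ preserves $R_\alpha=\partial_{x_1}$ and absorbs $df$; extending the coordinates along the Reeb flow then yields $\alpha=dx_1+y_1\tfrac{dz}{z}+c\,\tfrac{dz}{z}+\sum_{i=2}^n x_i\,dy_i$. The residue $c$ is exactly the pointwise $\tfrac{dz}{z}$-component of $\alpha_p$: it vanishes iff $\xi_p$ contains the distinguished $b$-direction $z\partial_z|_p$, separating case~1a ($c=0$, $\xi_p$ singular) from case~1b ($c\neq 0$, $\xi_p$ regular), and in the latter a suitable rescaling of $\alpha$ normalizes $c$ to $1$.

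For case~2, the vanishing $R_p=0$ as a smooth vector field together with $\alpha(R_\alpha)\equiv 1$ forces $R_\alpha|_p$ to be a non-zero $b$-multiple of $z\partial_z|_p$, so $\alpha_p$ has a non-zero $\tfrac{dz}{z}$-component. I would pick a smooth $g$ with $g(p)\neq 0$ so that $\tilde\alpha:=g\alpha$ has $\tfrac{dz}{z}$-coefficient equal to $1$ at $p$, and decompose $\tilde\alpha=\tfrac{dz}{z}+\beta$ with $\beta$ smooth. The $b$-contact condition at $p$ reduces to the non-degeneracy of $d\beta_p$ as a linear symplectic form on the $2n$-dimensional quotient of $T_pM$ by the $b$-direction $z\partial_z|_p$. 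Linear symplectic Darboux on $T_pM$ then supplies coordinates $(x_1,y_1,\ldots,x_n,y_n)$ with $d\beta_p=\sum_{i=1}^n dx_i\wedge dy_i$, and a final affine translation eliminates the constant value $\beta_p$, giving the pointwise identity $\tilde\alpha_p=\tfrac{dz}{z}+\sum_{i=1}^n x_i\,dy_i$.

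The main obstacle is the $b$-Poincaré analysis of case~1: one must identify the obstruction class $c$ with the pointwise $\tfrac{dz}{z}$-content of $\alpha_p$ and verify that the shear $x_1\mapsto x_1+f$ absorbing the exact part is compatible with the Reeb coordinate. If one prefers to run Moser's path method between $\alpha$ and the local model $\alpha_0$ instead, the same obstacle reappears as the requirement that $d\alpha_t|_{\ker\alpha_t}$ stay non-degenerate as a $b$-$2$-form along the interpolation, so that the homotopy equation $\iota_{X_t}d\alpha_t=\alpha_0-\alpha$ with $X_t\in\ker\alpha_t$ remains solvable; this is the $b$-analogue of the classical contact non-degeneracy argument.
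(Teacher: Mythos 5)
Your treatment of case 1 is correct but takes a genuinely different route from the paper. The paper works pointwise: it builds a basis of ${^b}T_pM$ adapted to $\alpha_p$ and $d\alpha_p$ by linear algebra, writes down a model $\alpha_0$ agreeing with $\alpha$ to first order at $p$, and runs Moser's path method on the linear interpolation $\alpha_t=(1-t)\alpha_0+t\alpha$, splitting the unknown vector field as $X_t=H_tR_{\alpha_t}+Y_t$. You instead take the flow box of $R_\alpha$ (legitimate, since $R_\alpha$ is a smooth vector field tangent to $Z$ and non-vanishing at $p$), reduce to the $b$-symplectic Darboux theorem on a transverse slice, and use the $b$-Poincar\'{e} lemma ${^b}H^1(\mathrm{ball})\cong \R\cdot[\frac{dz}{z}]$ to identify the primitive up to $df+c\,\frac{dz}{z}$. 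This buys a conceptual explanation of the dichotomy 1a/1b: the residue $c=\alpha_p(z\frac{\partial}{\partial z}\big|_p)$ is a genuine invariant because $z\frac{\partial}{\partial z}\big|_p$ spans the kernel of the anchor map and is independent of the choice of defining function. The price is that you quote the $b$-Darboux theorem and the $b$-Poincar\'{e} lemma as black boxes where the paper is self-contained. Do note that your final rescaling sending $c$ to $1$ in case 1b changes $\alpha$ by a constant, so you prove that statement only up to rescaling; the paper's proof hides the same wrinkle in ``the case $\xi_p$ regular works analogously'', since its linear-algebra step silently assumes $\alpha_p(z\frac{\partial}{\partial z})=1$, which cannot in general be arranged by a $Z$-preserving change of coordinates.

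Case 2 has a genuine gap: the ``final affine translation'' cannot eliminate $\beta_p$. Translating the $y_i$ does not change the covector $\beta_p$ at all, and translating the $x_i$ so that $\sum_i x_i(p)\,dy_i$ matches $\beta_p\big|_{T_pZ}$ moves the origin of the chart away from $p$, contradicting ``centered at $p$''. Concretely, for $\tilde{\alpha}=\frac{dz}{z}+dy_1+\sum_{i=1}^n x_i\,dy_i$ one has $R=z\frac{\partial}{\partial z}$, so $R_p=0$, yet $\tilde{\alpha}_p(\frac{\partial}{\partial y_1})=1$ and no linear change of the $(x_i,y_i)$ fixing the origin kills this term. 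The missing move, which is exactly how the paper closes this case, is to absorb the smooth exact remainder into the singular term by changing the \emph{defining function}: write $\tilde{\alpha}-\frac{dz}{z}-\sum_i x_i\,dy_i=dg$ by the smooth Poincar\'{e} lemma and set $\tilde{z}=e^{g}z$, so that $\frac{d\tilde{z}}{\tilde{z}}=\frac{dz}{z}+dg$. The point is that $\frac{dz}{z}\big|_p$ is canonical only modulo smooth covectors (only its pairing with $z\frac{\partial}{\partial z}\big|_p$ is invariant), and this freedom is precisely what eliminates $\beta_p$. With that replacement your case 2 closes; the preceding steps (that $R_p=0$ forces $R_p$ to be a non-zero multiple of $z\frac{\partial}{\partial z}\big|_p$, hence $u(p)\neq 0$, justifying the division by $u$) agree with the paper.
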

	
	We call the $b$-contact form \textit{regular} at $p\in M$ when the Reeb vector field is not vanishing at $p$ and \textit{singular} otherwise. \textcolor{black}{The proof we give here of the Darboux theorem for $b$-contact forms relies on the Darboux theorem for $b$-symplectic forms, by considering a contractible hypersurface transverse to the Reeb vector field and follows the similar arguments as in \cite{Vogel}. Alternatively, the theorem can be proved by constructing local coordinates and use Moser's path method, following closely the standard proof of the Darboux theorem for contact forms as in \cite{Geiges}.}

		\begin{proof}
	%\textcolor{black}{Assume $\dim M =3$, the higher dimensional case writes down similarly. Let $z$ be \textcolor{black}{a local}  defining function for the critical hypersurface $Z$.}
	Let $p \in  Z$ and assume that $i(R_p) \neq 0$. It follows that $i(R_p)\neq 0$ in an open neighbourhood around $p$. The Reeb flow preserves the contact form as $\mathcal{L}_{R_\alpha}\alpha=0$. Let $H$ be a hypersurface transverse to the Reeb vector field. As $R_\alpha$ is tangent to $Z$ it follows that $H$ is transverse to $Z$. By transversality, $(H,d\alpha|_H)$ is a $b$-symplectic manifold. By the $b$-Darboux theorem for $b$-symplectic manifolds (Theorem 37 in \cite{GMP}), there exists a coordinates $(y,z,\textcolor{black}{x_i,y_i})$ on $H$ such that $d\alpha|_H=dy\wedge \frac{dz}{z}\textcolor{black}{+\sum_{i=2}^{n}dx_i\wedge dy_i}$. Here $z$ is just the defining coordinate for $Z$ restricted to $H$. Then $\sigma:= \alpha|_H-y\frac{dz}{z}\textcolor{black}{-\sum_{i=2}^{n}x_idy_i}$ is a closed $b$-form of degree $1$ on $H$. We now apply Poincaré lemma for $b$-forms \textcolor{black}{(see for instance Definition 3 in \cite{gmps} or Definition 9 in \cite{kms} and the subsequent discussion therein).} We assume here that $H$ is contractible. There are two subcases: either $\sigma$ is a smooth $1$-form or $\sigma$ is an honest $b$-form of degree $1$.
	
	In the first case, the Poincaré lemma implies that $\sigma= ds$ for some smooth function $s\in C^\infty(H)$. Choose $\epsilon>0$ such that the time-$t$-flow $\phi_t$  of $R_\alpha$ is defined for $t\in (-\epsilon,\epsilon)$ on a neighbourhood of $p$. Let
	\begin{align*}
	    \psi:(-\epsilon,\epsilon) \times H &\to M \\
	    (x,(y,z\textcolor{black}{,x_i,y_i})) &\mapsto \phi_x((y,z\textcolor{black}{,x_i,y_i})).
	\end{align*}
	The image of $\psi$ is a neighbourhood of $p\in M$ because $R_\alpha$ is transversal to $H$. By the implicit function theorem, $\psi$ defines a system of local coordinates on some open neighbourhood $U$ of $p$ in $M$ and by the definition of $\psi$, we have that $\frac{\partial}{\partial x}=R_\alpha$. The expression of $\alpha$ is invariant under the flow of $R_\alpha$. Let us denote the projection of $U$ to $H$ along the flow lines of $\phi_t$ by $pr$. Hence $\alpha$ in the given coordinates is given by
	$$\alpha=dx+ y \frac{dz}{z}\textcolor{black}{+\sum_{i=2}^nx_idy_i}+pr^*(ds)=d(x+s\circ pr) +y\frac{dz}{z}\textcolor{black}{+\sum_{i=2}^nx_idy_i}.$$
	Since $s\circ pr$ does not depend on the coordinate $x$, $x'=x+s\circ pr$ is a change of coordinate and in the local system of coordinates we obtain that $$\alpha=dx'+y\frac{dz}{z}\textcolor{black}{+\sum_{i=2}^nx_idy_i}.$$
	At the point $p$, the $b$-contact distribution $\ker \alpha$ is singular.
	
	In the second case, $\sigma$ is a honest $b$-form of degree $1$. By the Poincaré lemma for $b$-forms, $\sigma$ is the differential for a $b$-function defined on $H$, given by $s+k\log|z|$, where $s$ is a smooth function and $k$ is a constant $k\in \mathbb{R}\setminus \{0\}$. The proof goes along the same lines as in the previous subcase, namely flowing along the Reeb vector field, which preserves the coordinate $z$. In the local coordinates, $\alpha$ is given by the following expression
	$$\alpha=x+ y \frac{dz}{z}\textcolor{black}{+\sum_{i=2}^nx_idy_i}+pr^*(ds+k\frac{dz}{z})=d(x+s\circ pr) +y\frac{dz}{z}+k\frac{dz}{z}\textcolor{black}{+\sum_{i=2}^nx_idy_i}.$$
	By the same reasoning as before, the change $x'=x+s\circ pr$, $y'=y+k$ is a change of coordinates. In the new coordinates, we obtain that
	$$\alpha=dx+y\frac{dz}{z}+\frac{dz}{z}\textcolor{black}{+\sum_{i=2}^nx_idy_i}.$$
	At the point $p$, the $b$-contact distribution $\ker \alpha$ is regular.

		We finally consider the case where $i(R_p)=0$, which corresponds to the case where $d\alpha$ is a smooth de Rham form. By Lemma \ref{decomposition}, a $b$-form decomposes as $f\frac{dz}{z}+\beta$, where $z$ is a defining function for $Z$. As $d\alpha$ is smooth, \textcolor{black}{the function $f$ is given by $f(z,x_i,y_i)=a+zh(z,x_i,y_i)$ where $a$ is a constant and $h$ a smooth function. As $f(p) \neq 0$ (otherwise we would be in the smooth case), $a \neq 0$.} We choose a neighbourhood $\mathcal{U}$ around $p$ such that $f$ is non-vanishing on that neighbourhood. By dividing by $f$, the $b$-form $\tilde{\alpha}=\frac{dz}{z}+\tilde{\beta}$ defines the same distribution.
		Now take a $2n$-dimensional disk $D^{2n} \ni p$ in $\mathcal{U}$, \textcolor{black}{transverse to $z\frac{\partial}{\partial z}$. Then $d\alpha$ is symplectic on $D^{2n}$ due to the $b$-contact condition.} As $(D^{2n},d\tilde{\alpha})$ is symplectic, we know by applying Darboux theorem for symplectic forms (we assume the disk $D^{2n}$ small enough), that there exist $2n$ functions $x_i,y_i$ such that locally $d\tilde{\alpha}=\sum_{i=1}^n dx_i\wedge dy_i$. Now consider the $b$-form $\tilde{\alpha}-\sum_{i=1}^n x_idy_i-\frac{dz}{z}$. This form is closed and smooth. Hence by the Poincar\'e lemma for smooth forms, there exists a smooth function $g$ such that
		$$\tilde{\alpha}=\frac{dz}{z}+dg+\sum_{i=1}^n x_i dy_i.$$
		We can change the defining function to $\tilde{z}=e^{g}z$, so that $\frac{d\tilde{z}}{\tilde{z}}=\frac{dz}{z}+dg$. Now $$\tilde{\alpha}=\frac{d\tilde{z}}{\tilde{z}}+\sum_{i=1}^n x_i dy_i.$$
	As $\tilde{\alpha}\wedge(d\tilde{\alpha})^n=n\frac{d\tilde{z}}{\tilde{z}}\wedge dx_1\wedge dy_1 \wedge \dots \wedge dx_n \wedge dy_n \neq 0$, the functions $(\tilde{z},x_1,\dots,,x_n,y_1,\dots,y_n)$ form a local system of coordinates.

		\end{proof}	
	
	\begin{remark} \label{rmkregularopen}
		It follows from the $b$-Darboux theorem that if $(M,\ker \alpha)$ be a $b$-contact manifold and $\ker \alpha_p$ is regular for $p\in Z$, then there is an open neighbourhood around $p$ where $\ker \alpha$ is regular. Similarly, it follows that the set where the distribution $\ker\alpha$ is singular is of codimension $1$ in $Z$.
	\end{remark}

	The following example shows that it is possible to have both local models appearing on one connected component of the critical set. Furthermore, it shows in the case where the Reeb vector field is singular, we can only prove the normal form pointwise and the result does not hold in a local neighbourhood as in the case when the Reeb vector field is regular.

	\begin{example}\label{Exampleonepointsingular}
		$(S^2\times S^1, \alpha=\sin \varphi d\theta+\cos\varphi \frac{dh}{h})$ where $(\theta,h)$ are the polar coordinates on $S^2$ and $\varphi$ the coordinate on $S^1$. The Reeb vector field is given by $R_\alpha= \sin \varphi \frac{\partial}{\partial \theta}+\cos \varphi h\frac{\partial}{\partial h}$.
	\end{example}

	 We will prove that there are at least two points where the Reeb vector field is singular in the compact, $3$-dimensional case. This will be a corollary of the following. By definition of the $b^m$-tangent bundle, the Reeb vector field is tangent to the critical set. We can prove that in dimension $3$, the Reeb vector field is in fact Hamiltonian with respect to the induced area form from the contact condition. We will prove the following theorem:
	
	\begin{theorem}
		Let $(M,\alpha=u\frac{dz}{z}+\beta)$ be a $b$-contact manifold of dimension $3$. Then the restriction to $Z$ of the $2$-form $\Theta= u d\beta+\beta\wedge du$ is symplectic and the Reeb vector field is Hamiltonian with respect to $\Theta$ with Hamiltonian function $u$, i.e. $\iota_R \Theta= du$.
	\end{theorem}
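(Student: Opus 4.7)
The plan is to exploit the identity $\iota_R(\alpha\wedge d\alpha) = d\alpha$, which follows from $\iota_R\alpha = 1$ and $\iota_R d\alpha = 0$, and then read off tangential information on $Z$. First I would fix local coordinates $(z,x,y)$ near a point of $Z=\{z=0\}$ and, by absorbing any smooth $dz$-component of $\beta$ into $u$, assume $\beta$ involves only $dx$ and $dy$. A direct computation with the $b$-exterior derivative gives
\[
\alpha\wedge d\alpha \;=\; \frac{dz}{z}\wedge \Theta \;+\; \beta\wedge d\beta .
\]
The term $\beta\wedge d\beta$ is a smooth $3$-form, hence equals $z$ times a multiple of $\frac{dz}{z}\wedge dx\wedge dy$, and therefore vanishes on $Z$. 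Hence the $b$-contact condition forces $\Theta|_Z$ to be non-vanishing; since $Z$ is two-dimensional, this is exactly the symplectic condition.

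For the Hamiltonian identity, I would apply $\iota_R$ to both sides of the above decomposition. Using the Leibniz rule, the normalization $\iota_R\alpha = 1$, and the relation $\iota_R d\beta = \bigl(\iota_R\tfrac{dz}{z}\bigr)\,du - (R\cdot u)\tfrac{dz}{z}$ obtained from $\iota_R d\alpha = 0$ applied to $d\alpha = -\tfrac{dz}{z}\wedge du + d\beta$, the algebra collapses to
\[
\frac{dz}{z}\wedge\bigl(\iota_R\Theta - du + (R\cdot u)\,\beta\bigr) = 0 .
\]
Since $\tfrac{dz}{z}, dx, dy$ form a local frame of ${}^bT^*M$, the smooth $1$-form in parentheses must be a multiple of $dz$, say $h\,dz$ for some smooth function $h$.

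It remains to restrict to $Z$. The $h\,dz$ term pulls back to zero there, and the correction $(R\cdot u)\beta$ also vanishes because $(R\cdot u)|_Z = 0$: writing $R = Az\partial_z + B\partial_x + C\partial_y$, the $\tfrac{dz}{z}$-coefficient of $\iota_R d\alpha = 0$ reads $Bu_x + Cu_y + z\, p_z = 0$ once smooth $dz$-terms are rewritten in the $b$-basis, which on $Z$ yields $Bu_x|_Z + Cu_y|_Z = 0$. Combined with $R|_Z = B|_Z\partial_x + C|_Z\partial_y$ (the $Az\partial_z$ component drops upon smooth restriction) and the equality $(\iota_R\Theta)|_Z = \iota_{R|_Z}\Theta|_Z$, one obtains $\iota_{R|_Z}\Theta|_Z = du|_Z$, as required.

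The main delicate point, and likely the source of bookkeeping errors, is the dual role of $dz$ and $\tfrac{dz}{z}$ in ${}^bT^*M$: these are \emph{not} independent as sections (indeed $dz = z\cdot \tfrac{dz}{z}$), so extracting coefficients from $b$-forms requires consistently rewriting smooth $dz$-terms in the $b$-basis. This conversion mechanism is precisely what produces the key identity $(R\cdot u)|_Z = 0$, reflecting geometrically that a Hamiltonian vector field preserves its own Hamiltonian.
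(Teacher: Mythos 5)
Your proof is correct, and it reaches the result by a slightly different computational route than the paper. The paper writes $R=g\,z\frac{\partial}{\partial z}+X$ with $X\in\mathfrak{X}(Z)$, extracts the three scalar identities $gu+\beta(X)=1$, $-g\,du+\iota_Xd\beta=0$, $\iota_Xdu=0$ from the defining equations of the Reeb field, and then gets $\iota_X\Theta=u\,\iota_Xd\beta+\beta(X)\,du-(\iota_Xdu)\,\beta=(gu+\beta(X))\,du=du$ in one line. You instead contract the single volume identity $\iota_R(\alpha\wedge d\alpha)=d\alpha$ against the decomposition $\alpha\wedge d\alpha=\frac{dz}{z}\wedge\Theta+\beta\wedge d\beta$ and compare coefficients in the $b$-coframe $\{\frac{dz}{z},dx,dy\}$; your key auxiliary fact $(R\cdot u)|_Z=0$ is exactly the paper's third Reeb equation $\iota_Xdu=0$ in disguise. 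The two arguments use the same ingredients (the decomposition lemma and $\iota_R\alpha=1$, $\iota_Rd\alpha=0$); yours is more roundabout in the bookkeeping but has the merit of making explicit the point you flag at the end — that $dz=z\cdot\frac{dz}{z}$, so smooth terms must be rewritten in the $b$-basis before coefficients can be compared — and of justifying the symplectic claim for $\Theta|_Z$ (via the vanishing of $\beta\wedge d\beta$ on $Z$ as a $b$-form), which the paper asserts without detail. One cosmetic remark: your normalization absorbing the $dz$-component of $\beta$ into $u$ is harmless precisely because it changes $u$ by a multiple of $z$ and $\beta$ by a multiple of $dz$, neither of which affects $u|_Z$, $du|_Z$, or the pullback of $\Theta$ to $Z$; it would be worth saying this in one sentence since the decomposition $\alpha=u\frac{dz}{z}+\beta$ is not unique.
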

	
	\begin{proof}
		In the decomposition, $\alpha$ is given by $\alpha= u\frac{dz}{z}+\beta$. The contact condition implies that $\Theta := ud\beta+\beta \wedge du$ is an area form, so it is symplectic. In the same decomposition, let us write the Reeb vector field as $R_\alpha= g \cdot z \frac{\partial}{\partial z}+X$, where $g \in C^\infty(M)$ and $ X\in \mathfrak{X}(Z)$. As $R_\alpha$ is the Reeb vector field, we obtain the following equations:
		\begin{align*}
		g\cdot u +\beta(X)=1, \\
		-g du +\iota_X d\beta=0, \\
		\iota_X du =0.
		\end{align*}
		A straightforward computation using those equations yields that $\iota_{X} \Theta= du$, hence the restriction of $R_\alpha$ to $Z$ is the Hamiltonian vector field for the function $-u$.
	\end{proof}
	
	In the compact case, we obtain:
	
	\begin{corollary}\label{corHam3dim}
		Let $(M,\alpha)$ be a $3$-dimensional compact $b$-contact manifold. Then there are at least two points where the local normal form of $\alpha$ is described by the singular model of the Darboux theorem.
	\end{corollary}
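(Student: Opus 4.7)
The plan is to combine the Hamiltonian interpretation of the Reeb flow on $Z$ from the previous theorem with the elementary fact that a smooth function on a closed surface has at least two critical points. Since $M$ is compact and $Z \subset M$ is a closed embedded hypersurface, each connected component of $Z$ is a closed $2$-surface (compact, without boundary); orientability is automatic because $\Theta|_Z$ is a symplectic, hence area, form.

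More concretely, writing $\alpha = u\frac{dz}{z} + \beta$ in a tubular neighborhood of $Z$ and decomposing the Reeb vector field as $R_\alpha = g\,z\partial_z + X$ with $X \in \mathfrak{X}(Z)$, the previous theorem gives $\iota_X(\Theta|_Z) = du|_Z$. Thus $X$ is the Hamiltonian vector field on the symplectic surface $(Z,\Theta|_Z)$ with Hamiltonian $u|_Z$. Since $u|_Z$ is a smooth function on a compact surface, it attains both a maximum and a minimum, producing at least two critical points (and if $u|_Z$ were constant, then every point of $Z$ would be critical, which is even stronger). By nondegeneracy of $\Theta|_Z$, $X$ vanishes at each such critical point.

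Finally, I would translate $X(p)=0$ into the hypothesis of case~(2) of the $b$-contact Darboux theorem. At such a point, the normalization $g(p)u(p) + \beta(X(p)) = 1$ forces $u(p) \neq 0$ and $g(p) = 1/u(p)$, so $R_\alpha(p) = g(p)\, z\partial_z|_p$ is nonzero as an element of ${}^bT_pM$ but vanishes as a smooth vector field in $T_pM$. This is exactly the hypothesis $R_p = 0$ of case~(2) of Theorem \ref{contactDarbouxthm}, so up to multiplication by a non-vanishing function $\alpha$ is locally modeled at $p$ by the singular normal form $\tilde\alpha_p = \frac{dz}{z} + \sum x_i\, dy_i$. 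The main point to keep in mind, and really the only subtlety, is that ``$R_p = 0$'' in the Darboux theorem is to be read as vanishing in the ordinary tangent space $T_pM$ rather than in ${}^bT_pM$: the normalization $g u + \beta(X) = 1$ shows that $R_\alpha$ can never vanish in ${}^bTM$ along $Z$.
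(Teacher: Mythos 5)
Your argument is correct and is exactly the route the paper intends: the corollary is presented as an immediate consequence of the preceding theorem, namely that $R_\alpha|_Z$ is the Hamiltonian vector field of $u|_Z$ on the compact symplectic surface $(Z,\Theta|_Z)$, so the maximum and minimum of $u|_Z$ give at least two zeros of $X$, which are precisely points where the singular Darboux model applies. Your extra care in checking $g(p)u(p)=1$ and in reading ``$R_p=0$'' as vanishing of the anchor image in $T_pM$ rather than in ${}^bT_pM$ fills in details the paper leaves implicit, but the approach is the same.
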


	\begin{remark}
		As shown in Example \ref{unitball}, there is a $b$-contact structure on the unit disk under the pull-back under the Moebius transformation of the regular local model. It follows from the last corollary, that this example can not be compactified.
	\end{remark}

	\begin{example}
		As before, consider $(S^2 \times S^1, \alpha= \sin \varphi d\theta+\cos \varphi \frac{dh}{h})$. The Reeb vector field on the critical set is given by Hamiltonian vector field of the function $-\cos \varphi$ with respect to the area form $d\varphi \wedge d\theta$. Hence, on the critical set, the Reeb vector field vanishes when $\sin \varphi=0$ and there are no periodic orbits of the Reeb vector field on the critical set.
	\end{example}

	A well known result in contact geometry is Gray's stability theorem, asserting that on a closed manifold, smooth families of contact structures are isotopic. The proof uses Moser's path method that works well in $b$-geometry. One proves along the same lines the following stability result for $b$-contact manifolds.
	
	\begin{theorem}\label{thm:gray}
		Let $(M,Z)$ compact $b$-manifold and let $(\xi_t)$, $t\in [0,1]$ be a smooth path of $b$-contact structures. Then there exists an isotopy $\phi_t$ preserving the critical set $Z$ such that $(\phi_t)_* \xi_0 =\xi_t$, or equivalently,
		\begin{equation}\label{eq:tobederived}
				\phi_t^*\alpha_t=\lambda_t\alpha_0
		\end{equation}
		 for a non-vanishing function $\lambda_t$.
	\end{theorem}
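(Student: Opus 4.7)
The plan is to imitate the standard Gray stability proof via Moser's path method, working throughout with $b$-forms and $b$-vector fields so that the resulting isotopy automatically preserves~$Z$. Choose $b$-contact forms $\alpha_t$ with $\ker\alpha_t=\xi_t$ (locally they exist; globally one patches using a partition of unity, since the space of defining forms of a fixed $b$-contact structure is convex). We look for an isotopy $\phi_t$ with $\phi_0=\mathrm{Id}$ generated by a time-dependent $b$-vector field $X_t$, satisfying
\[
\phi_t^*\alpha_t=\lambda_t\alpha_0,\qquad \lambda_t>0.
\]
Differentiating and pulling back to time $t$, this is equivalent to
\[
\dot\alpha_t+\mathcal L_{X_t}\alpha_t=\mu_t\,\alpha_t,
\]
where $\mu_t=(\dot\lambda_t/\lambda_t)\circ\phi_t^{-1}$.

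Decompose $X_t=H_tR_{\alpha_t}+Y_t$ with $Y_t\in\ker\alpha_t\subset{}^bTM$. Using Cartan's formula and $\iota_{R_{\alpha_t}}d\alpha_t=0$ we get
\[
dH_t+\iota_{Y_t}d\alpha_t+\dot\alpha_t=\mu_t\alpha_t.
\]
Evaluating on $R_{\alpha_t}$ (and using $\alpha_t(R_{\alpha_t})=1$, $\iota_{R_{\alpha_t}}d\alpha_t=0$) yields
\[
R_{\alpha_t}(H_t)+\dot\alpha_t(R_{\alpha_t})=\mu_t.
\]
The simplest solution is to set $H_t\equiv 0$ and take $\mu_t:=\dot\alpha_t(R_{\alpha_t})$. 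Note that $\mu_t$ is a smooth function on $M$ (in particular smooth across $Z$), since $\dot\alpha_t$ is a $b$-$1$-form and $R_{\alpha_t}$ is a $b$-vector field, so their pairing descends to a smooth function. With this choice, the equation to solve becomes
\[
\iota_{Y_t}d\alpha_t=\mu_t\alpha_t-\dot\alpha_t,
\]
and the right-hand side vanishes on $R_{\alpha_t}$ by construction.

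The main point of the proof, and the step that uses the $b$-contact condition in an essential way, is the unique solvability of this last equation in the class of $b$-vector fields tangent to $\ker\alpha_t$. Away from $Z$ this is the standard argument: $d\alpha_t|_{\ker\alpha_t}$ is a symplectic bilinear form, and the annihilator of $R_{\alpha_t}$ inside $T^*M$ is exactly the image of $\iota_{(\cdot)}d\alpha_t$ restricted to $\ker\alpha_t$, giving a unique $Y_t$. On $Z$, the identical argument goes through because $\alpha_t\wedge(d\alpha_t)^n$ is nowhere-vanishing as a section of $\Lambda^{2n+1}({}^bT^*M)$, so $d\alpha_t$ has rank $2n$ on the fibers of $\ker\alpha_t\subset{}^bTM$; hence $Y_t$ is determined as a genuine section of ${}^bTM$, and by definition of the $b$-tangent bundle it is automatically tangent to $Z$. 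Smoothness of $Y_t$ in $(t,p)$ follows from smoothness of the data and the linear-algebraic nature of the equation.

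Finally, since $M$ is compact and $X_t=Y_t$ is a smooth, time-dependent $b$-vector field, it integrates to a global isotopy $\phi_t:M\to M$ which preserves $Z$ (because $Y_t\in\Gamma({}^bTM)$ is tangent to $Z$). Setting $\lambda_t=\exp\!\bigl(\int_0^t\mu_s\circ\phi_s\,ds\bigr)$ produces the required positive conformal factor, and by construction $\phi_t^*\alpha_t=\lambda_t\alpha_0$, hence $(\phi_t)_*\xi_0=\xi_t$. The only delicate point one must watch is that every choice in the Moser scheme respects the $b$-category; once this is verified, the proof is formally identical to the classical one in \cite{Geiges}.
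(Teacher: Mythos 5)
Your proposal is correct and follows essentially the same route as the paper: Moser's path method with the ansatz $X_t\in\ker\alpha_t$, the conformal factor determined by $\mu_t=\dot\alpha_t(R_{\alpha_t})$, and unique solvability of $\iota_{X_t}d\alpha_t=\mu_t\alpha_t-\dot\alpha_t$ from the nondegeneracy of $d\alpha_t$ on $\ker\alpha_t\subset{}^bTM$, with tangency to $Z$ coming for free from working in the $b$-category. The extra care you take about smoothness of $\mu_t$ across $Z$ and the rank argument on $Z$ is a welcome elaboration of points the paper leaves implicit, but it is not a different proof.
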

	
	\begin{proof}
		Assume that $\phi_t$ is the flow of a time dependent vector field $X_t$.\textcolor{black}{The flow exists for all time by the assumption that the manifold is compact.} Deriving Equation \ref{eq:tobederived}, we obtain
		$$d\iota_{X_t}\alpha_t+\iota_{X_t}d\alpha_t+\dot{\alpha}_t=\mu_t \alpha_t$$
		where $\mu_t=\frac{\dot{\lambda_t}}{\lambda_t}\circ\phi_t^{-1}$. If $X_t$ belongs to $\xi_t$, the first term of the last equation vanishes and applying then the Reeb vector field yields
		$$\dot{\alpha_t}(R_{\alpha_t})=\mu_t.$$
		The equation given by	
		$$\iota_{X_t}d\alpha_t=\mu_t\alpha_t-\dot{\alpha_t}$$
		then defines $X_t$ because $(\mu_t\alpha_t-\dot{\alpha_t})(R_{\alpha_t})=0$. We integrate the vector field $X_t$ to find $\phi_t$ and as $X_t$ is a vector field, tangent to the critical set, the flow preserves it.
	\end{proof}
	
	The compactness condition is necessary as it is shown in the next example.
	
	\begin{example}
		Consider the path of $b$-contact structures on $\R^3$ given by $\ker \alpha_t$ where $\alpha_t= (\cos \frac{\pi}{2} t -y\sin \frac{\pi}{2} t)\frac{dz}{z}+(\sin \frac{\pi}{2} t +y\cos \frac{\pi}{2} t)dx$. As $\alpha_0= \frac{dz}{z}+y dx$ and $\alpha_1= dx-y\frac{dz}{z}$, the two $b$-contact structures cannot be isotopic. This of course does not contradict  Theorem \ref{thm:gray} as this example is non-compact.
	\end{example}
	
	Along the same lines, we prove the following semi-local result.
	
	\begin{theorem}\label{semi-local}
		Let $(M,Z)$ be a $b$-manifold and assume $Z$ compact. Let $\xi_0=\ker \alpha_0$ and $\xi_1=\ker \alpha_1$ be two $b$-contact structures such that $\alpha_0|_Z=\alpha_1|_Z$. Then there exists a local isotopy $\psi_t$, $t\in [0,1]$ in an open neighbourhood $\mathcal{U}$ around $Z$ such that $\psi^*_t\alpha_t=\lambda_t \alpha_0$ and $\psi_t|_Z=\Id$ where $\lambda_t$ is a family of non-vanishing smooth functions.
	\end{theorem}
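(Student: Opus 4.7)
The plan is to adapt Moser's path method, already used in the proof of the preceding Gray-type stability theorem, working only in a suitable neighbourhood of $Z$ and exploiting the hypothesis $\alpha_0|_Z=\alpha_1|_Z$ to guarantee that the isotopy fixes $Z$ pointwise.

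First I would consider the linear interpolation $\alpha_t=(1-t)\alpha_0+t\alpha_1$. Since $\alpha_0|_Z=\alpha_1|_Z$, the restriction $\alpha_t|_Z$ is independent of $t$, so the $b$-contact condition $\alpha_t\wedge(d\alpha_t)^n\neq 0$ holds along $Z$ for every $t\in[0,1]$. Because $Z$ is compact and the $b$-contact condition is open, one obtains an open neighbourhood $\mathcal{U}\supset Z$ (independent of $t$, by a standard uniform argument using $[0,1]$ compact) on which $\alpha_t$ is a smooth family of $b$-contact forms.

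Next, exactly as in the proof of Gray stability above, I would search for a time-dependent $b$-vector field $X_t\in\xi_t=\ker\alpha_t$ whose flow $\psi_t$ satisfies $\psi_t^*\alpha_t=\lambda_t\alpha_0$. Differentiating in $t$ and using Cartan's formula together with $\iota_{X_t}\alpha_t=0$ reduces the problem to
\begin{equation*}
\iota_{X_t}d\alpha_t+\dot\alpha_t=\mu_t\alpha_t,\qquad \mu_t=\tfrac{\dot\lambda_t}{\lambda_t}\circ\psi_t^{-1}.
\end{equation*}
Evaluating at the Reeb vector field $R_{\alpha_t}$ gives $\mu_t=\dot\alpha_t(R_{\alpha_t})$, and then the residual equation $\iota_{X_t}d\alpha_t=\mu_t\alpha_t-\dot\alpha_t$ uniquely determines $X_t$ inside $\xi_t$ since $d\alpha_t|_{\xi_t}$ is non-degenerate. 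Note that $\dot\alpha_t=\alpha_1-\alpha_0$ is a well-defined $b$-form of degree one, hence $X_t$ is automatically a $b$-vector field and is tangent to $Z$.

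The key observation, which is precisely what the hypothesis $\alpha_0|_Z=\alpha_1|_Z$ buys us, is that $\dot\alpha_t|_Z=0$ as a $b$-form, so $\mu_t|_Z=0$ and the right-hand side $\mu_t\alpha_t-\dot\alpha_t$ vanishes along $Z$; the uniqueness of $X_t$ in $\xi_t$ then forces $X_t|_Z\equiv 0$. Compactness of $Z$ together with $X_t|_Z=0$ ensures, by the standard escape-time argument for ODEs, that the flow $\psi_t$ exists for all $t\in[0,1]$ on some possibly smaller tubular neighbourhood $\mathcal{U}'\subset\mathcal{U}$ of $Z$, preserves $Z$ pointwise, and fixes $Z$ as desired. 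Finally, $\lambda_t$ is recovered by integrating the ODE $\dot\lambda_t/\lambda_t=\mu_t\circ\psi_t$ with $\lambda_0=1$, giving a nowhere-vanishing smooth family. The only genuine subtlety, as opposed to bookkeeping, is ensuring that $X_t$ is a genuine $b$-vector field and that its flow is complete on a uniform neighbourhood of $Z$; both follow from compactness of $Z$ and from the fact that $d\alpha_t$ is a $b$-form whose kernel on $\xi_t$ is trivial.
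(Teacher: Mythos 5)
Your proposal is correct and follows essentially the same route as the paper: the linear interpolation $\alpha_t=(1-t)\alpha_0+t\alpha_1$, openness of the contact condition plus compactness of $Z$ to get a uniform neighbourhood, Moser's trick with $X_t\in\xi_t$, evaluation on the Reeb vector field to determine $\mu_t=\dot\alpha_t(R_{\alpha_t})$, and the observation that $\dot\alpha_t|_Z=0$ forces $\mu_t|_Z=0$ and hence $X_t|_Z=0$. Your added remarks on completeness of the flow near the compact zero set $Z$ and on recovering $\lambda_t$ by integrating $\dot\lambda_t/\lambda_t=\mu_t\circ\psi_t$ only make explicit details the paper leaves implicit.
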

	
	\begin{proof}
		The proof is done following Moser's path method. Put $\xi_t=(1-t)\xi_0+t\xi_1$, $t\in [0,1]$. Because the non-integrability condition is an open condition and $\xi_t|_Z=\xi_0|_Z=\xi_1|_Z$, there exists an open neighbourhood $\mathcal{U}$ containing $Z$ such that $\xi_t$ is a family of $b$-contact structures. We will prove that there exists an isotopy $\psi_t:\mathcal{U} \mapsto M$ such that  $\psi_t^*\alpha_t=\lambda_t \alpha_0$, where $\lambda_t$ is a non-vanishing smooth function and $\lambda_t|_Z=1$. Assume that $\psi_t$ is the flow of a vector field $X_t$ and differentiating, we obtain the following equation:
		$$d\iota_{X_t}\alpha_t+\iota_{X_t}d\alpha_t+\dot{\alpha}_t= \mu_t \alpha_t,$$ where $\mu_t= \frac{d}{dt}(\log|\lambda_t|)\circ \psi_t^{-1}$.
		Taking $X_t \in \xi_t$, this equation writes down
		\begin{equation}\label{stabilityeq}
		\dot{\alpha}_t+\iota_{X_t}d\alpha_t=\mu_t \alpha_t.
		\end{equation}
		Applying the Reeb vector field to both sides, we obtain the equation that defines $\mu_t$:
		$$\mu_t=\dot{\alpha}_t(R_{\alpha_t}).$$ As $\dot{\alpha}_t|_Z=0$, $\mu_t|_Z=0$ and hence $X_t$ is zero on $Z$.
		By non-degeneracy of $d\alpha_t$ on $\xi_t$ there exists a unique $X_t\in \xi_t$ solving Equation \ref{stabilityeq}. Integrating $X_t$\textcolor{black}{, which we can do as $Z$ is compact, }yields the desired result.
	\end{proof}
	Note that this proof fails if one wants to prove stability of $b$-contact forms, that is we cannot assume that $\lambda_t= 1$ in a neighbourhood of $Z$.

	\section{$b$-Jacobi manifolds}\label{bJacobi}
	In the symplectic case, it is often helpful to look at $b$-symplectic manifolds as being the dual of a particular case of Poisson manifold. In contact geometry, Jacobi manifolds play this role.
	
	Recall that a Jacobi structure on a manifold $M$ is a triplet $(M,\Lambda,R)$ where $\Lambda$ is a smooth bi-vector field and $R$ a vector field satisfying the following compatibility conditions:
	\begin{align}
	[\Lambda,\Lambda]=2R \wedge \Lambda, && [\Lambda,R]=0,
	\end{align}
	where the bracket is the Schouten--Nijenhuis bracket. We refer the reader to \cite{Vaisman} and references therein for further information on Jacobi manifolds.

	\begin{definition}\label{defbJacobi}
		Let $(M,\Lambda,R)$ be a Jacobi manifold of dimension $2n+1$. We say that $M$ is a $b$-Jacobi manifold if $\Lambda^n\wedge R$ cuts the zero section of $\Lambda^{2n+1}(TM)$ transversally.
	\end{definition}

	Note that this definition is similar to the one of $b$-Poisson manifolds, in the sense that it also asks the top wedge power to be transverse to the zero section. We denote the hypersurface given by the zero section of $\Lambda^{2n+1}(TM)$ by $Z$ and we call it the \emph{critical set.}
	
	It is well-known that contact manifolds are a particular case of odd-dimensional Jacobi manifolds. A particular case of even-dimensional Jacobi manifolds are given by \emph{locally conformally symplectic manifolds.}
	
	\begin{definition}\label{def:locallyconformallysymplectic}
		A locally conformally symplectic manifold is a manifold $M$ of dimension $2n$ equipped with a non-degenerate two-form $\omega \in \Omega^2(M)$ that is locally \textcolor{black}{conformally} closed, which is equivalent to the existence of a closed $1$-form $\alpha \in \Omega^1(M)$ such that $d\omega=\alpha \wedge \omega$.
	\end{definition}
	
	Locally conformally symplectic manifold regained recent attention, notably in the work \cite{ChMu}.
	
	We will prove that $b$-contact manifolds and $b$-Jacobi manifolds are dual in some sense, as will be explained in the next two propositions. Before doing so, let us note that in the case where the dimension of the Jacobi manifold is $\dim M=2n$, we can given an similar definition to the one of Definition \ref{defbJacobi} by asking that $\Lambda^{2n}$ cuts the zero-section of $\Lambda^{2n}(TM)$ transversally. It should be possible to prove in the same lines that this case corresponds to locally conformally $b$-symplectic manifold.
	
%	\begin{proposition}
%		Let $(M,\alpha)$ be a $b$-contact manifold. Let $\Lambda$ be the bi-vector field computed as in Equation \ref{eq:Jacobibivector} in Appendix \ref{app:Jacobi} and let $R$ be the Reeb vector field. Then $(M,\Lambda,R)$ is a $b$-Jacobi manifold.
%	\end{proposition}
%	
%	\begin{proof}
%		As  $b$-Jacobi is a local condition, we can work in a local coordinate chart. Outside of the critical set, $\alpha$ is a contact form. Hence we can compute $\Lambda$ as in Equation \ref{eq:Jacobibivector} in both local models of the Darboux theorem and $\Lambda$ can smoothly be extended to the critical set $Z$. A straightforward computation now yields that for both local models $\Lambda^n \wedge R \pitchfork 0$.
%	\end{proof}
%	
%	
%	\textcolor{black}{I propose to kill the appendix Lemma and to rewrite this proposition as follows:}
%	
		\begin{proposition}\label{prop:bcontacttobjacobi}
		Let $(M,\alpha)$ be a $b$-contact manifold. Let $\Lambda$ be the bi-vector field given by $\Lambda(df,dg)=d\alpha(X_f,X_g)$ where $X_f,X_g$ are the Hamiltonian $b$-vector fields associated to the smooth functions $f$ and $g$ (see Equation \ref{eq:Hamiltonianvf}) and let $R_\alpha$ be the Reeb vector field. Then $(M,\Lambda,R_\alpha)$ is a $b$-Jacobi manifold.
	\end{proposition}
	
	\begin{proof}
	\textcolor{black}{	As  $b$-Jacobi is a local condition, we can work in a local coordinate chart. Outside of the critical set, $\alpha$ is a contact form and the statement follows from the well-known fact that the associated Jacobi structure is defined by the bi-vector field $\Lambda$ defined by $\Lambda(df,dg)=d\alpha(X_f,X_g)$ and the Reeb vector field $R_\alpha$. Along the critical set, we use the local normal forms of $\alpha$ proved in Theorem \ref{contactDarbouxthm}. A straightforward computation now yields that for $\alpha=\frac{dz}{z}+\sum_{i=1}^n x_idy_i$,
	 $$\Lambda=\sum_{i=1}^n[\frac{\partial }{\partial x_i}\wedge \frac{\partial}{\partial y_i}+ z\frac{\partial}{\partial z}\wedge (x_i \frac{\partial }{\partial x_i})]$$
		and it follows from this expression that $[\Lambda,\Lambda]=2\Lambda \wedge R_\alpha$, $[\Lambda,R_\alpha]=0$ and $\Lambda^n\wedge R_\alpha \pitchfork 0$. The computation of the associated bi-vector field for the other local normal forms given in Theorem \ref{contactDarbouxthm} is done in a similar fashion.
		 Hence $(M,\Lambda,R_\alpha)$ is a $b$-Jacobi manifold.}
	\end{proof}
	
	Recall that to every Jacobi manifold $(M,\Lambda,R)$, one can associate a homogeneous Poisson manifold. Indeed, $(M\times \R,\Pi:=e^{-\tau}(\Lambda+\frac{\partial}{\partial\tau}\wedge R))$ is a Poisson manifold because
	\begin{align*}
	[\Pi,\Pi ]&= [ e^{-\tau}\Lambda,e^{-\tau}\Lambda]+2[e^{-\tau}\Lambda,e^{-\tau}\frac{\partial}{\partial \tau}\wedge R ]+[e^{-\tau}\frac{\partial}{\partial \tau} \wedge R,e^{-\tau}\frac{\partial}{\partial \tau}\wedge R]\\
	&=2e^{-2\tau}[\Lambda,\Lambda]+2(-e^{-\tau}\Lambda \wedge R)=0.
	\end{align*}
	Furthermore, the later is said to be \emph{homogeneous} because the vector field $T=\frac{\partial}{\partial\tau}$ satisfies
	$$\mathcal{L}_T \Pi=-\Pi.$$
	This construction is called Poissonization. The same stays true in the $b$-scenario, although we need to assume that the $b$-Jacobi manifold is of odd dimension, as $b$-Poisson manifold are defined only for even dimensions.
	\begin{lemma}
		The Poissonization of a $b$-Jacobi manifold of odd dimension is a homogeneous $b$-Poisson manifold.
	\end{lemma}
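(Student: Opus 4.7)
The computation displayed immediately before the lemma already shows that $[\Pi,\Pi]=0$ and that $L_T\Pi=-\Pi$ with $T=\partial/\partial\tau$, so $(M\times\mathbb{R},\Pi)$ is a homogeneous Poisson manifold. The only thing left is to verify the $b$-Poisson transversality condition, namely that $\Pi^{n+1}$ cuts the zero section of $\Lambda^{2n+2}\bigl(T(M\times\mathbb{R})\bigr)$ transversally, where $\dim M=2n+1$.

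The plan is to compute $\Pi^{n+1}$ explicitly and reduce the transversality of this top wedge power to the transversality assumption already built into the $b$-Jacobi hypothesis on $(M,\Lambda,R)$. Since $\partial/\partial\tau\wedge\partial/\partial\tau=0$, the binomial expansion of $(\Lambda+\partial/\partial\tau\wedge R)^{n+1}$ collapses to
\begin{equation*}
\Bigl(\Lambda+\tfrac{\partial}{\partial\tau}\wedge R\Bigr)^{\!n+1}=\Lambda^{n+1}+(n+1)\,\Lambda^{n}\wedge\tfrac{\partial}{\partial\tau}\wedge R.
\end{equation*}
Because $\Lambda$ is a bivector field on a manifold of dimension $2n+1$, the first term $\Lambda^{n+1}$ is forced to vanish for degree reasons. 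Hence
\begin{equation*}
\Pi^{n+1}=(n+1)\,e^{-(n+1)\tau}\,\Lambda^{n}\wedge R\wedge\tfrac{\partial}{\partial\tau}.
\end{equation*}

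The prefactor $(n+1)e^{-(n+1)\tau}$ is nowhere zero, and $\partial/\partial\tau$ is a nowhere-vanishing section of $T(M\times\mathbb{R})$ complementary to $TM$. Under the canonical splitting $\Lambda^{2n+2}T(M\times\mathbb{R})\cong\Lambda^{2n+1}TM\otimes\langle\partial/\partial\tau\rangle$, the section $\Pi^{n+1}$ is identified, up to a nowhere-vanishing factor, with the pullback of $\Lambda^{n}\wedge R$ from $M$ to $M\times\mathbb{R}$. Transversality to the zero section is preserved under multiplication by nowhere-vanishing functions and under pullback by a submersion, so $\Pi^{n+1}\pitchfork 0$ is equivalent to $\Lambda^{n}\wedge R\pitchfork 0$ on $M$, which is precisely Definition~\ref{defbJacobi}. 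In particular the critical hypersurface of the Poissonization is $Z\times\mathbb{R}$.

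No step is really hard; the main thing to get right is the bookkeeping that $\Lambda^{n+1}$ vanishes by dimension, so that the top wedge power has a single clean term that factors through $\Lambda^{n}\wedge R$, together with the observation that the exponential weight and the $\partial/\partial\tau$ factor do not affect transversality.
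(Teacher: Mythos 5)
Your proposal is correct and follows essentially the same route as the paper: compute $\Pi^{n+1}$, observe it equals a nowhere-vanishing multiple of $\frac{\partial}{\partial\tau}\wedge\Lambda^{n}\wedge R$, and conclude transversality from the $b$-Jacobi hypothesis. You simply spell out the details (the vanishing of $\Lambda^{n+1}$ for degree reasons and the irrelevance of the exponential prefactor) that the paper leaves as a ``straightforward computation.''
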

	\begin{proof}
		The proof is a straightforward computation:
		$$\Pi^{n+1}=-e^{-(n+1)\tau}\frac{\partial}{\partial\tau}\wedge\Lambda^n\wedge R.$$
		It follows from the definition of $b$-Jacobi that $\Pi$ is transverse to the zero-section.
	\end{proof}
	
	\begin{proposition}\label{bJacobicorrespondance}
		Let $(M^{2n+1},\Lambda,R)$ be a $b$-Jacobi manifold. Then $M$ is a $b$-contact manifold.
	\end{proposition}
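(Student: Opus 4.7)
The plan is to exploit the Poissonization construction and the duality between $b$-symplectic and $b$-Poisson structures, reducing the proposition to a symplectization-type argument in the $b$-category. By the lemma immediately preceding this proposition, the Poissonization $(M\times\R,\, \Pi = e^{-\tau}(\Lambda + \partial_\tau \wedge R))$ is a homogeneous $b$-Poisson manifold of even dimension $2(n+1)$; from the explicit formula $\Pi^{n+1} = -e^{-(n+1)\tau}\,\partial_\tau \wedge \Lambda^n \wedge R$ one reads off that its critical hypersurface is $Z\times \R$, where $Z$ denotes the critical hypersurface of $\Lambda$.

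The first step is to dualize $\Pi$ into a $b$-symplectic form $\omega \in {}^b\Omega^2(M\times\R)$ with critical hypersurface $Z\times\R$, using the correspondence between $b$-Poisson and $b$-symplectic manifolds recalled in Section \ref{b-symp}. The homogeneity condition $\mathcal{L}_T \Pi = -\Pi$ for $T = \partial_\tau$ dualizes to $\mathcal{L}_T \omega = \omega$, and since $d\omega = 0$, Cartan's magic formula yields a $b$-primitive $\lambda := \iota_T \omega \in {}^b\Omega^1(M\times\R)$ with $d\lambda = \omega$.

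The second step is to restrict. Let $j : M \hookrightarrow M\times \R$ be the inclusion $x \mapsto (x,0)$ and set $\alpha := j^*\lambda$. Since $T$ is transverse to $M\times\{0\}$ and the critical hypersurface $Z\times\R$ meets $M\times\{0\}$ transversally in $Z$, the pullback $\alpha$ is a genuine $b$-form on $(M, Z)$ with $d\alpha = j^*\omega$. To verify the $b$-contact condition, I would use the identity $\iota_T\omega^{n+1} = (n+1)\,\lambda \wedge \omega^n$: because $\omega^{n+1}$ is a $b$-volume form on $M\times\R$ by $b$-symplecticity, and $T$ is transverse to $M\times\{0\}$, the pullback $j^*(\lambda \wedge \omega^n) = \alpha \wedge (d\alpha)^n$ is nowhere vanishing as a section of $\Lambda^{2n+1}({}^bT^*M)$, which is exactly the $b$-contact condition.

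The main technical subtlety I anticipate is ensuring that each operation performed --- dualization, Lie derivative, contraction, and pullback --- respects the $b$-category and preserves the critical hypersurface at every stage, given that all of the objects involved are singular along $Z\times\R$. This can be checked by unwinding the definitions in local $b$-Darboux coordinates on $M\times\R$ adapted to $Z\times\R$, where every claim collapses to an explicit coordinate computation. An alternative and more direct route would be to define $\alpha$ as the unique $b$-form satisfying $\alpha(R) = 1$ and $\ker \alpha = \mathrm{Im}(\Lambda^\sharp)$ and then verify the $b$-contact condition from the Jacobi compatibility, but I expect the Poissonization approach to be the cleanest and most consistent with the constructions already developed in the paper.
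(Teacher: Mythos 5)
Your argument is sound and proves the statement by a genuinely different, global route. The paper instead argues locally: it fixes $p\in Z$, notes that the transversality condition forces the characteristic leaf through $p$ to have dimension $2n$ or $2n-1$, and in each case invokes the Dazord--Lichnerowicz--Marle local structure theorems to write $(\Lambda,R)$ explicitly in coordinates, recognizing the result as the Jacobi structure associated to one of the two Darboux models of Theorem \ref{contactDarbouxthm} (the $2n$-dimensional leaves corresponding to a singular Reeb vector field, the $(2n-1)$-dimensional ones to a regular one). That case analysis buys finer information used later in Section \ref{structurecriticalset}, namely which local model occurs at which point of $Z$ and how it matches the leaf dimension; your Poissonization argument buys a globally defined $b$-contact form $\alpha=j^*\iota_T\omega$ in one stroke and avoids the DLM machinery entirely, and all the individual steps (dualization of a homogeneous $b$-Poisson structure, Cartan's formula in the $b$-complex, pullback along a hypersurface transverse to $Z\times\R$) do go through. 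Two points to tighten: first, the nonvanishing of $j^*(\lambda\wedge\omega^n)$ at points of $Z$ deserves the explicit remark that $T=\frac{\partial}{\partial\tau}$ completes a basis of ${^b}T_pM$ to a basis of ${^b}T_{(p,0)}(M\times\R)$, so that $\omega^{n+1}(T,e_1,\dots,e_{2n+1})\neq 0$ gives exactly the $b$-contact condition; second, since the point of the proposition in context is the duality between $b$-Jacobi and $b$-contact structures, you should close the loop by verifying that the Jacobi structure induced by $\ker\alpha$ is the original $(\Lambda,R)$ (for instance by comparing both with $\Pi$ along $M\times\{0\}$, or via your suggested alternative of characterizing $\alpha$ by $\alpha(R)=1$ and $\ker\alpha\supset\mathrm{Im}\,\Lambda^\sharp$), a step your write-up defers but which the paper's local identification delivers automatically.
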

	
	\begin{proof}
		The proposition is based on the local normal form of Jacobi structures, which are proved in \cite{DLM}. The main result is recalled in Appendix \ref{app:localJacobi}. Let $(M,\Lambda,R)$ be the $b$-Jacobi structure, so that $\Lambda^n \wedge R \pitchfork 0$. As usual, denote the critical hypersurface by $Z=(\Lambda^n \wedge R)^{-1}(0)$. First note that outside of $Z$, the leaf of the characteristic foliation is maximal dimensional. This is saying that outside of $Z$, the Jacobi structure is equivalent to a contact structure.
		
		Consider a point $p\in Z$ and denote the leaf of the characteristic foliation by $L$. By the transversality condition, the dimension of the leaf needs to be of dimension $2n$ or $2n-1$. Indeed, as $(M \times \R, e^{-\tau}(\frac{\partial}{\partial \tau}\wedge R +\Lambda))$ is $b$-Poisson, the critical set of $M\times \R$ is foliated by symplectic manifolds of codimension $2$, that is of dimension $2n$. Hence the critical set restricted to the hypersurface $\{\tau =0\}$, which is identified to be the critical set $Z$ of the initial manifold $M$, is foliated by codimension $1$ and codimension $2$ leaves.
		
		Let us first consider the case where at the point $x\in Z$, the leaf is of dimension $2n$. We will prove that this case corresponds to the case where the $R$ is singular, vanishing linearly. Let us apply Theorem 5.9 of \cite{DLM}. Hence the Jacobi manifold $(N,\Lambda_N,E_N)$ (see Theorem 5.9) is of dimension $1$, hence $\Lambda_N$ is zero. Hence $\Lambda$ is given by
		$$\Lambda= \sum_{i=1}^n \frac{\partial}{\partial x_i} \wedge \frac{\partial}{\partial x_{i+n}}-\sum_{i=1}^{n} x_{i+n}\frac{\partial}{\partial x_{i+n}}\wedge E_N.$$
		
		We now use the transversality condition on $\Lambda^n\wedge E_N$ to conclude that $E_N=z\frac{\partial}{\partial z}$ which is the same expression for the $b$-Jacobi structure associated to the $b$-contact form $\alpha=\frac{dz}{z}+\sum_{i=1}^n x_idx_{i+n}$.
		
		Let us consider the case where the leaf is of dimension $2n-1$. We will see that this corresponds to the case where the Reeb vector field is regular.
		% $R$ at the point $x$ is tangent to $Z$, so $E=\frac{\partial}{\partial x_0}$. We need to distinguish two cases, depending whether or not the leaf at $x$ has dimension $2n$ or $2n-1$. Assume that the leaf at $x$ has dimension $2n-1$.
		According to Theorem 5.11 in \cite{DLM}, the bi-vector field is given by
		$$\Lambda=\Lambda_{2n-1}+ \Lambda_N+E\wedge Z_N$$
		where $(N,\Lambda_N,Z_N)$ is a homogeneous $2$-dimensional Poisson manifold and 
  
  $\Lambda_{2n-1}=\sum_{i=1}^{n-1}(x_{i+n-1}\frac{\partial}{\partial x_0}-\frac{\partial}{\partial x_i})\wedge \frac{\partial}{\partial x_{i+q}}$. The transversality condition implies that $\Lambda_{2n-1}^{n-1}\wedge\Lambda_N\wedge \frac{\partial}{\partial x_0}\pitchfork 0$, hence $\Lambda_N$ is a $b$-Poisson manifold. By \cite{GMP}, $\Lambda_N=z\frac{\partial}{\partial z} \wedge \frac{\partial}{\partial y}$. The homogeneous vector field $Z_N$ is determined by equation $\mathcal{L}_{Z_N}\Lambda_N=-\Lambda_N$. Hence $Z_N= y\frac{\partial}{\partial y}$. Hence the Jacobi structure is given by $E=\frac{\partial}{\partial x_0}$ and $$\Lambda= \sum_{i=1}^{n-1}(x_{i+n-1}\frac{\partial}{\partial x_0}-\frac{\partial}{\partial x_i})\wedge \frac{\partial}{\partial x_{i+q}}+z\frac{\partial}{\partial z} \wedge \frac{\partial}{\partial y}+ \frac{\partial}{\partial x_0}\wedge y \frac{\partial}{\partial y}, $$
		which is the Jacobi structure associated to the contact form $\alpha=dx_0+y\frac{dz}{z}+\sum_{i=1}^{n-1}x_idx_{i+q}$.
	\end{proof}
	
	\section{Geometric structure on the critical set}\label{structurecriticalset}
	
	To determine the induced structure of the $b$-contact structure on the critical set, we compute the associated Jacobi structure. Let us briefly review some results on Jacobi manifolds, which can all be found in \cite{Vaisman}. The Hamiltonian vector fields of a Jacobi manifold $(M,\Lambda,R)$ are defined by $X_f= \Lambda^\sharp(df)+fR$. It can be shown that the distribution $\mathcal{C}(M)=\{X_f|f\in C^\infty(M)\}$ is involutive and invariant under the Hamiltonian flow. The Stefan--Sussmann theorem asserts that $\mathcal{C}(M)$ integrates to a singular foliation, denoted by $\mathcal{F}$. As $\mathcal{C}(M)= \text{Im} \Lambda^\sharp +\langle R \rangle $, the leaves of $\mathcal{F}$ are even-dimensional when $R\in \text{Im} \Lambda^\sharp$ and odd dimensional in the other case. The induced structure on odd-dimensional leaves of $\mathcal{F}$ turns out to be a contact structure. For even dimensional leaves, one obtains locally conformally symplectic leaves. The definition of locally conformally symplectic manifolds is recalled in Definition \ref{def:locallyconformallysymplectic}.
	
	The computation of a Jacobi structure associated to a $b$-contact structure is \textcolor{black}{given as in Proposition \ref{prop:bcontacttobjacobi} by the bi-vector field defined by $\Lambda(df,dg)=d\alpha(X_f,X_g)$ and the Reeb vector field}. As we have proved a local norm form theorem, we can use the two local models to compute the associated Jacobi structure and check in both cases if $R\in \Lambda^\sharp$. We will prove
	
	\begin{theorem}
		Let $(M^{2n+1}, \alpha)$ be a $b$-contact manifold and $p\in Z$. We denote $\mathcal{F}_p$ the leaf of the singular foliation $\mathcal{F}$ going through $p$. Then
		\begin{enumerate}
			\item if $\xi_p$ is regular (as in Definition \ref{def:singularcontactstructure}), that is, $\mathcal{F}_p$ is of dimension $2n$, then the induced structure on $\mathcal{F}_p$ is locally conformally symplectic;
			\item if $\xi_p$ is singular (as in Definition \ref{def:singularcontactstructure}), that is, $\mathcal{F}_p$ is of dimension $2n-1$, then the induced structure on $\mathcal{F}_p$ is contact.
		\end{enumerate}
	\end{theorem}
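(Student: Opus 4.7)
The plan is to apply the local normal forms from Theorem \ref{contactDarbouxthm}, compute the characteristic distribution of the induced $b$-Jacobi structure at $p$ in each case, and invoke the standard dichotomy for leaves of a Jacobi manifold.

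As explained at the start of this section and in Appendix \ref{app:Jacobi}, the $b$-contact form $\alpha$ gives rise to a smooth Jacobi pair $(\Lambda, R)$ on $M$. Its characteristic distribution $\mathcal{C} = \mathrm{Im}\,\Lambda^\sharp + \langle R\rangle$ integrates to the singular foliation $\mathcal{F}$, and by \cite{Vaisman} an odd-dimensional leaf (i.e.\ one along which $R\notin \mathrm{Im}\,\Lambda^\sharp$) inherits a contact structure, whereas an even-dimensional leaf ($R\in \mathrm{Im}\,\Lambda^\sharp$) inherits a locally conformally symplectic structure. It therefore suffices to compute $\dim\mathcal{C}_p$ and to check whether $R_p\in \mathrm{Im}\,\Lambda^\sharp_p$ in each of the two cases of the statement.

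I would then use Theorem \ref{contactDarbouxthm} to place $\alpha$ in a normal form centered at $p$ and evaluate $\Lambda^\sharp_p$ on the coordinate differentials via the identity $\Lambda^\sharp(df) = X_f - fR$, where $X_f$ is the Hamiltonian $b$-vector field determined by $\iota_{X_f}\alpha = f$ and $\iota_{X_f} d\alpha = -df + R(f)\alpha$. When $\xi_p$ is singular, one lands in the model $\alpha = dx_1 + y_1\tfrac{dz}{z} + \sum_{i\ge 2} x_i\,dy_i$ with $R = \partial_{x_1}$; a direct calculation shows that $X_{x_1}|_p = X_{y_1}|_p = X_z|_p = 0$ as smooth vectors, since the only nonzero ${}^b$-components of these Hamiltonians lie along $z\partial_z$, which vanishes as a smooth tangent vector on $Z$, while $X_{x_i}|_p = \partial_{y_i}$ and $X_{y_i}|_p = -\partial_{x_i}$ for $i\ge 2$. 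Hence $\mathrm{Im}\,\Lambda^\sharp_p = \mathrm{span}\{\partial_{x_i}, \partial_{y_i} : i\ge 2\}$ has rank $2n-2$ and is transverse to $R_p = \partial_{x_1}$, so $\dim\mathcal{C}_p = 2n-1$ and the leaf is contact. When $\xi_p$ is regular we are in one of the two remaining Darboux normal forms, and the analogous computation yields $\mathrm{Im}\,\Lambda^\sharp_p$ of rank $2n$ containing $R_p$ (trivially when $R_p = 0$), so $\dim\mathcal{C}_p = 2n$ and the leaf is locally conformally symplectic.

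The main obstacle is organising the Hamiltonian computations in the presence of the singular $\tfrac{dz}{z}$ terms. The underlying mechanism making the rank of $\mathrm{Im}\,\Lambda^\sharp$ drop along $Z$ is the fact that $z\partial_z$ is a nonvanishing section of ${}^bTM$ but vanishes as a smooth tangent vector on $Z$; this is precisely what causes the smooth bivector $\Lambda$ to degenerate along the critical set and what produces the lower-dimensional leaves of $\mathcal{F}$ there.
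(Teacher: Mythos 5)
Your proposal is correct and follows essentially the same route as the paper: reduce to the normal forms of Theorem \ref{contactDarbouxthm}, compute the associated Jacobi pair $(\Lambda,R)$, determine the leaf dimension by checking whether $R_p\in \mathrm{Im}\,\Lambda^\sharp_p$, and invoke the standard contact/locally conformally symplectic dichotomy for Jacobi leaves. The only (immaterial) difference is that you compute $\Lambda^\sharp$ via Hamiltonian vector fields through $\Lambda^\sharp(df)=X_f-fR$, whereas the paper writes $\Lambda=\Pi+R\wedge X$ with $\Pi$ dual to $d\alpha$ and $X$ a Liouville vector field, as in Appendix \ref{app:Jacobi}; both computations yield the same degeneration of $\Lambda$ along $Z$ coming from the vanishing of $z\partial_z$ as a smooth vector field.
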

	
	\begin{proof}
		By Theorem \ref{contactDarbouxthm}, if $\xi_p=\ker \alpha_p$ is singular, the Reeb vector field is not singular and the contact form can be written locally as
		$\alpha=dx_1+y_1\frac{dz}{z}+\sum_{i=2}^{n} x_i dy_i$. The Reeb vector field is given by $R=\frac{\partial}{\partial x_1}$,
%		\textcolor{black}{TAKE THIS OUT? the dual of $d\alpha$ by $\Pi=z\frac{\partial}{\partial y_1}\wedge\frac{\partial}{\partial z}+\sum_{i=2}^{n} \frac{\partial}{\partial x_i} \wedge \frac{\partial}{\partial y_i}$. As Liouville vector field with respect to $d\alpha$, we take $X=\sum_{i=1}^{n}y_i\frac{\partial}{\partial y_i}$. The Jacobi structure associated to this $b$-contact structure is given by $\Lambda=\Pi+R \wedge X$.}
		and the computation of the associated bi-vector field yields
		$$\Lambda=z\frac{\partial }{\partial y_1}\wedge \frac{\partial}{\partial z}+ \sum_{i=2}^{n} \frac{\partial}{\partial x_i}\wedge \frac{\partial}{\partial y_i}+\sum_{i=1}^{n}y_i\frac{\partial}{\partial x_1}\wedge\frac{\partial}{\partial y_i}.$$
		
		Hence, on the critical set, the bi-vector field is given by
		$$\Lambda\big|_Z=\sum_{i=2}^{n} \frac{\partial}{\partial x_i}\wedge \frac{\partial}{\partial y_i}+\sum_{i=1}^{n}y_i\frac{\partial}{\partial x_1}\wedge\frac{\partial}{\partial y_i}.$$
		Let us check if we can find a one form $\alpha$ such that $\Lambda\big|_Z^\#(\alpha)=\frac{\partial}{\partial x_1}$.
		For $y_1=0$, this cannot be solved, hence the set $\{z=0,y_1=0\}$ is a leaf with an induced contact structure.
		
		If $\xi_p$ is not singular and the Reeb vector is regular, the contact form can be written locally as $\alpha=dx_1+y_1\frac{dz}{z}+\frac{dz}{z}+\sum_{i=2}^{n} x_i dy_i$. A direct computation implies that the Reeb vector field lies in the distribution spanned by the bi-vector field $\Lambda$, hence the $b$-contact structure induces a locally conformally symplectic structure on the set $\{z=0,y_1\neq 0\}$.

		Last, if $\xi_p$ is not singular and the Reeb vector is singular, Theorem \ref{contactDarbouxthm} yields that the Reeb vector field can be written as $z\frac{\partial}{\partial z}$. As the Reeb vector field is vanishing, the critical set equals the $2n$-dimensional leaf spanned by $\text{Im}\Lambda^\sharp$. The induced structure on $\mathcal{F}_p$ is locally conformally symplectic.
	\end{proof}
	
	\begin{remark}
		Let us consider the case where $\dim M =3$ and the distribution $\xi$ is singular. Then the induced structure on the critical set is given by $\Lambda|_Z= y_1 \frac{\partial}{\partial y_1}\wedge\frac{\partial}{\partial x_1}$. As the critical set is a surface, it is clear that this is a Poisson structure and furthermore, that it is transverse to the zero section. Hence we obtain an induced $b$-symplectic structure on the critical set. Note that this is not true for higher dimensions.
	\end{remark}
	
	\begin{example}
	\textcolor{black}{$b$-Jacobi manifolds appear naturally in the study of geodesics of Lorentzian, or more generally pseudo-Riemannian manifolds.
	Given a manifold $M$ equipped with a pseudo-Riemannian metric $g$, denote by $\mathcal{L}$ the space of all oriented non-parametrized geodesics. The space $\mathcal{L}$ splits as $\mathcal{L}_{\pm}$, the space of space- and time-like geodesics, which are the geodesics $\gamma$ such that $g(\dot{\gamma},\dot\gamma)>0$ respectively $g(\dot{\gamma},\dot\gamma)<0$ , and $\mathcal{L}_0$, the space of light-like geodesics, which corresponds to $g(\dot\gamma,\dot\gamma)=0$. As was shown in \cite{khta}, $\mathcal{L}_\pm$ is even dimensional and admits an induced symplectic structure, obtained by Hamiltonian reduction on the level-set of the Hamiltonian $H=\pm1$ defined on the cotangent bundle of $M$, equipped with the standard symplectic structure.
	  The space $\mathcal{L}_0$ can be seen as the common boundary of $\mathcal{L}_\pm$ and has an induced contact structure.  These structures can be regrouped into a Jacobi structure on $\mathcal{L}$, having two leaves of maximal dimension given by $\mathcal{L}_+$ and $\mathcal{L}_-$ and one leaf of codimension $1$ given by $\mathcal{L}_0$. In the special case when $\dim \mathcal{L}=2$, this is indeed a $b$-Poisson structure (see Remark 2.8 in \cite{khta}). The described structure of the space of all oriented non-parametrized geodesics thus resembles the induced structure of a $b$-contact structure on the critical set.}
	\end{example}
	
	\section{Symplectization and contactization}\label{symplectization}
	
	Symplectic and contact manifolds are related to each other as follows. It is well-known that a contact manifold can be transformed into a symplectic one by \emph{symplectization}: if $(M,\alpha)$ is a contact manifold, then $(M\times \R, d(e^t\alpha))$ (where $t$ is the coordinate on $\R$) is a symplectic manifold. On the other hand, hypersurfaces of a symplectic manifold $(M,\omega)
	$ are contact, provided that there exists a vector field satisfying $\mathcal{L}_X \omega= \omega$ that is transverse to the hypersurface. Such a vector field is called Liouville vector field. The contact form on the hypersurface is given by the contraction of the symplectic form with the Liouville vector field, i.e. $\alpha= \iota_X \omega$.
	
	We will show that the same holds in the $b$-category.
	
	\begin{example}
		Let $(\R^4,\omega=\frac{1}{z}dz \wedge dt + dx \wedge dy)$ be a $b$-symplectic manifold. A Liouville vector field is given by $X=\frac{1}{2}(z\frac{\partial}{\partial z} +2t \frac{\partial}{\partial t}+ x \frac{\partial}{\partial x} +y\frac{\partial}{\partial y})$. Note that Liouville vector fields are defined up to addition of symplectic vector fields, that is a vector field $Y$ satisfying $\mathcal{L}_Y \omega=0$. Another Liouville vector field is for example given by $t\frac{\partial}{\partial t}+x\frac{\partial}{\partial x}$.
	\end{example}
	
	Let us take a $b$-symplectic manifold $(W,\omega)$ of dimension $(2n+2)$ and a Liouville vector field $X$ on $W$ that is transverse to a hypersurface $H$ of $W$. Then $(H,\iota_X \omega)$ is a $b$-contact manifold of dimension $(2n+1)$ as $\iota_X \omega \wedge (d\iota_X\omega)^{n}=\frac{1}{n+1}\iota_X(\omega^{n+1})$ is a volume form provided that $X$ is transverse to $H$. If $H$ does not intersect the critical set, one obtains of course a smooth contact form. Due to the $b$-Darboux theorem, there are two local models for $b$-contact manifolds and we will see that we can obtain both structures, depending on the relative position of the hypersurface with the Reeb vector field on it.
	
	%The following is a well-known construction in contact geometry, which works also in the $b$-setting.
	%Let $(W^{2n+2},\omega)$ be a symplectic manifold. Taking a submanifold $M$ of dimension $2n+1$ and transverse to a Liouville vector field, we obtain a contact manifold with contact form $\alpha=i_X \omega$. Indeed, a direct computation yields that $i_X \omega \wedge (di_X\omega)^{n}=\frac{1}{n+1}i_X(\omega^{n+1})$, which is a volume form provided that $M$ is transverse to $W$.
	
	\begin{example} \label{ex:bgeodesic}
		Let us take $(W=\R^4,\omega=\frac{1}{z}dz \wedge dt + dx \wedge dy)$ and the Liouville vector field $X=t\frac{\partial}{\partial t}+x\frac{\partial}{\partial x}$. The contraction of $X$ with the $b$-symplectic form yields $\iota_X \omega=-\frac{t}{z}dz +xdy$. Let us take different hypersurfaces transverse to $X$ and compute the induced $b$-contact form.
		\begin{itemize}
			\item   If we take as hypersurface the hyperplane $M_1=\{(1,y,-t,z),y,t,z\in \R\}$, which is transverse to $X$, we obtain $\alpha=dy+t\frac{dz}{z}$, which is the regular local model.
			\item   If we take as hypersurface the hyperplane $M_2=\{(x,y,-1,z),x,y,z\in \R\}$, which is transverse to $X$, we obtain $\alpha=\frac{dz}{z}+xdy$, which is the singular local model.
		\end{itemize}
	\end{example}
	
    \begin{example}\label{example:S3}
		The three dimensional sphere admits a $b$-contact structure. Consider the $\R^4$ with the standard $b$-symplectic structure $\omega=\frac{dx_1}{x_1}\wedge dy_1 + dx_2\wedge dy_2$ and denote by $S^3$ the unit sphere in $\R^4$. The Liouville vector field $X=\frac{1}{2}x_1\frac{\partial}{\partial x_1}+y_1\frac{\partial}{\partial y_1}+\frac{1}{2}(x_2 \frac{\partial}{\partial x_2}+y_2\frac{\partial }{\partial y_2})$ is transverse to the sphere and hence $\iota_X \omega$ defines a $b$-contact form on $S^3$. The critical set is a $2$ dimensional sphere, $S^2$, given by the intersection of the sphere with the hyperplane $z=0$.
	\end{example}
	
	\begin{example}\label{ex:bgeod}
		The unit cotangent bundle of a $b$-manifold has a natural $b$-contact structure. Let $(M,Z)$ be a $b$-manifold of dimension $n$ with coordinates $z,y_i, i=2,\dots,n$ as in Example \ref{extendedphasespace}. It is shown in \cite{GMP} that the cotangent bundle has a natural $b$-symplectic structure defined by the $b$-form given by the exterior derivative $d\lambda=d\big(x_1\frac{dz}{z} +\sum_{i=2}^{n} x_i dy_i\big)$. The unit $b$-cotangent bundle is given by ${^bT^*_1M}=\{(z,y_2,\dots,y_n,x_1,\dots,x_n)\in {^bT^*M} | \  \sum_{i=1}^n x_i^2=1 \}$. The vector field $\sum_{i=1}^n x_i \frac{\partial}{\partial {x_i}}$ defined on the $b$-cotangent bundle ${^bT^*M}$ is a Liouville vector field, and is transverse to the unit $b$-cotangent bundle, and hence induces a $b$-contact structure on it.

 From a Riemannian point of view, the Reeb vector field describes the geodesic flow associated to a $b$-metric, that is a metric a bundle metric on ${^b}TM$. %In the coordinate chart $(z,y_i)$, a $b$-metric is given by $g=\frac{1}{z^2}dz\otimes dz+\sum_{i=2}^n dy_i\otimes dy_i$ and
 It induces a bundle metric $g^*$ on ${^b}T^*M$. The unit cotangent bundle is alternatively described by ${^bT^*_1M}=\{X\in {^b}T^*M | g^*(X,X)=1\}$ and the associated Reeb vector field to the associated contact form as described above, is the push-forward under the bundle isomorphism of the geodesic vector field on ${^b}TM$.
	\end{example}
	
	We will compute a particular case of the unit cotangent bundle of a $b$-manifold.
	
	\begin{example}\label{example:3torus}
	Consider the torus $\mathbb{T}^2$ as a $b$-manifold where the boundary component if given by two disjoint copies of $S^1$. The unit cotangent bundle $S^*\mathbb{T}^2$, diffeomorphic to the $3$-torus $\mathbb{T}^3$ is a $b$-contact manifold with $b$-contact form given by $\alpha=\sin\phi\frac{dx}{\sin(x)}+\cos \phi dy$, where $\phi$ is the coordinate on the fiber and $(x,y)$ the coordinates on $\mathbb{T}^2$.
	\end{example}
	
	We saw that hypersurfaces of $b$-symplectic manifolds that are transverse to a Liouville vector field have an induced $b$-contact structure. The next lemma describes which model describes locally the $b$-contact structure.
	
	\begin{lemma}
		Let $(W,\omega)$ be a $b$-symplectic manifold and $X$ a Liouville vector field transverse to a hypersurface $H$. Let $R$ be the Reeb vector field defined on $H$ for the $b$-contact form $\alpha=i_X \omega$. Then $R \in H^\perp$, where $H^\perp$ is the symplectic orthogonal of $H$.
	\end{lemma}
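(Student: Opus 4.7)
The strategy is direct: compute $d\alpha$ explicitly using the Liouville condition, then match the defining equations of the Reeb vector field with the definition of the symplectic orthogonal.

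First, I would compute the exterior derivative of $\alpha = \iota_X\omega$. Since $\omega$ is $b$-closed and $X$ is a Liouville vector field, i.e. $\mathcal{L}_X\omega = \omega$, Cartan's magic formula (which extends verbatim to the $b$-complex, as the extended de Rham differential commutes with contractions by $b$-vector fields in the usual way) gives
$$d\alpha \;=\; d(\iota_X\omega) \;=\; \mathcal{L}_X\omega - \iota_X(d\omega) \;=\; \omega.$$

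Next, recall from Section \ref{Darboux} that the Reeb vector field $R$ associated to the $b$-contact form $\alpha$ on $H$ is the unique $b$-vector field on $H$ characterized by
$$\iota_R\bigl(d\alpha|_H\bigr)=0 \quad\text{and}\quad \alpha(R)=1.$$
Pulling the identity $d\alpha=\omega$ back to $H$, the first condition becomes $\omega(R,v)=0$ for every $v\in {^b}T_pH$ and every $p\in H$. This is by definition the statement that $R$ belongs to the symplectic orthogonal $H^\perp$ of $H$ in ${^b}TW$. Finally, the normalization $\alpha(R)=\omega(X,R)=1$ merely fixes $R$ within the line $H^\perp$, which is one-dimensional because $\omega$ is non-degenerate on ${^b}TW$ and $H$ is of codimension one; note that $H^\perp\subset {^b}TH$ since $\omega|_H$ has odd rank and its kernel coincides with $H^\perp$, so $R$ is indeed tangent to $H$ as required.

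The argument is essentially algebraic and contains no real obstacle; the only point requiring attention is book-keeping along the critical hypersurface $Z$. Since $\omega$ and the contraction by $X$ are defined as sections of $\Lambda^\bullet({^b}T^*W)$, and $\alpha$ is a genuine $b$-contact form, both the identity $d\alpha=\omega$ and the characterization of $R$ remain valid pointwise on ${^b}TW|_H$, including at points of $H\cap Z$, so no additional care is needed there.
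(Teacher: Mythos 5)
Your proof is correct and follows essentially the same route as the paper: both compute $d\alpha = d(\iota_X\omega) = \mathcal{L}_X\omega - \iota_X d\omega = \omega$ via Cartan's formula and then read off $\iota_R\omega|_H = 0$ from the defining equation of the Reeb vector field, which is exactly the statement $R\in H^\perp$. The extra remarks on the normalization and on behaviour along $Z$ are fine but not needed; the paper's proof is the same one-line computation.
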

	
	\begin{proof}
		The Reeb vector field defined on $H$ satisfies $i_R(d\alpha)|_H=i_R (di_X\omega)|_H=i_R\omega|_H=0$.
	\end{proof}
	
	Hence if $H^\perp$ is generated by a singular vector field, the contact manifold $(H,\alpha)$ is locally of the second type as in the $b$-Darboux theorem. In the other case, the local model is given by the first type.

	We now come back to the contactization of a $b$-symplectic manifold.
	
	\begin{theorem}
		Let $(M,\alpha)$ be a $b$-contact manifold. Then $(M\times \R, \omega=d(e^t\alpha))$ is a $b$-symplectic manifold.
	\end{theorem}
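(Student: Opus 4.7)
The plan is to verify directly that $\omega$ is closed (which is immediate since it is exact) and that its top wedge power does not vanish as a section of $\Lambda^{2n+2}({}^bT^*(M\times\R))$, where $\dim M = 2n+1$. The first step is to fix the $b$-structure on $M\times\R$: if $Z\subset M$ is the critical hypersurface of $\alpha$ with defining function $f$, then the same function $f$ (pulled back along the projection) defines the critical hypersurface $Z\times\R$ of $M\times\R$, so $\alpha$ pulls back to a $b$-form on $M\times\R$ and $dt$ is a smooth form there. In particular, $e^t\alpha$ is a well defined $b$-form of degree $1$ on $M\times\R$, and so is $\omega = d(e^t\alpha) = e^t\, dt\wedge\alpha + e^t\, d\alpha$.

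Next I would compute $\omega^{n+1}$ via the binomial expansion. Since $(dt\wedge\alpha)^2 = 0$, only the two terms of degree $\leq 1$ in $dt\wedge\alpha$ survive:
\begin{equation*}
\omega^{n+1} = e^{(n+1)t}\bigl((d\alpha)^{n+1} + (n+1)\, dt\wedge\alpha\wedge(d\alpha)^n\bigr).
\end{equation*}
The first summand vanishes for purely dimensional reasons: $(d\alpha)^{n+1}$ involves no $dt$, so it is a section of $\Lambda^{2n+2}$ of the sub-bundle of ${}^bT^*(M\times\R)$ pulled back from ${}^bT^*M$, which has rank $2n+1$. Hence
\begin{equation*}
\omega^{n+1} = (n+1)\, e^{(n+1)t}\, dt\wedge\alpha\wedge(d\alpha)^n .
\end{equation*}

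The $b$-contact condition gives $\alpha\wedge(d\alpha)^n \neq 0$ as a section of $\Lambda^{2n+1}({}^bT^*M)$. Wedging with the smooth, nowhere vanishing $1$-form $dt$ yields a nowhere vanishing section of $\Lambda^{2n+2}({}^bT^*(M\times\R))$, and multiplication by $(n+1)e^{(n+1)t}$ preserves this. Hence $\omega^{n+1}\neq 0$ everywhere, and combined with $d\omega=0$ this shows that $(M\times\R,\omega)$ is $b$-symplectic with critical hypersurface $Z\times\R$.

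I do not anticipate a genuine obstacle: the argument is a direct transcription of the classical symplectization computation into the $b$-de Rham complex. The only point that deserves care, and which I would state explicitly, is the compatibility of the $b$-structures on the two factors, namely that the $b$-calculus on $M\times\R$ with critical set $Z\times\R$ restricts on the $M$-factor to the original $b$-calculus on $(M,Z)$, so that wedge and exterior derivative commute with the obvious inclusions of forms.
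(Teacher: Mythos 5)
Your proposal is correct and follows essentially the same route as the paper: both verify closedness (immediate from exactness) and compute $\omega^{n+1}=(n+1)e^{(n+1)t}\,dt\wedge\alpha\wedge(d\alpha)^n$, which is nonvanishing by the $b$-contact condition. Your write-up is in fact slightly more careful than the paper's one-line computation, since you explicitly justify discarding the $(d\alpha)^{n+1}$ term and fix the $b$-structure on $M\times\R$ as $Z\times\R$.
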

	
	\begin{proof}
		It is clear that $\omega$ is a closed $b$-form. Furthermore, a direct computation yields
		$$\big((e^td\alpha)\big)^{n+1}=e^{t(n+1)}dt\wedge \alpha \wedge (d\alpha)^n,$$
		which is non-zero as a $b$-form by the non-integrability condition.
	\end{proof}
	
	It is easy to see that $\frac{\partial}{\partial t}$ is a Liouville vector field of the symplectization $(M\times \R,d(e^t\alpha))$, which is clearly transverse to the submanifold $M \times \{0\}$. Hence, we obtain the initial contact manifold $(M,\alpha)$. This gives us the following proposition.

	\begin{proposition}
		Every $b$-contact manifold can be obtained as a hypersurface of a $b$-symplectic manifold.
	\end{proposition}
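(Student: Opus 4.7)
The plan is to read the proposition as an immediate corollary of the contactization theorem just proved, combined with the remark that $\partial/\partial t$ is a Liouville vector field on the symplectization transverse to $M\times\{0\}$. So my strategy is to explicitly exhibit $M$ as such a hypersurface and then verify that the induced $b$-contact form on it recovers the original $\alpha$.

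First, starting from a $b$-contact manifold $(M,\alpha)$, I form the symplectization $(W,\omega) := (M\times\mathbb{R},\, d(e^{t}\alpha))$, which the preceding theorem guarantees is $b$-symplectic (with critical hypersurface $Z\times\mathbb{R}$, where $Z$ is the critical set of $M$). I then take the embedded hypersurface $H := M\times\{0\} \hookrightarrow W$, which clearly meets the critical set of $W$ along $Z\times\{0\}$, so $H$ is itself a $b$-manifold with critical set $Z$.

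Second, I check that $X := \partial/\partial t$ is a Liouville vector field for $\omega$: using Cartan's formula and $\iota_{X}\omega = \iota_{X}(e^{t}dt\wedge\alpha + e^{t}d\alpha) = e^{t}\alpha$ (since $\iota_X\alpha = 0$ and $\iota_X d\alpha = 0$), one gets $\mathcal{L}_X\omega = d(e^{t}\alpha) = \omega$. Transversality to $H$ is obvious since $X$ is nowhere tangent to the slice $t=0$. Applying the symplectization-to-contactization procedure recalled just before the proposition, the hypersurface $H$ inherits a $b$-contact form $\iota_{X}\omega|_{H}$, and evaluating at $t=0$ yields $e^{0}\alpha = \alpha$. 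Hence $(M,\alpha)$ is realized as a hypersurface of the $b$-symplectic manifold $(W,\omega)$.

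There is essentially no obstacle here beyond making sure the computations are valid in the $b$-setting; the only subtle point is that $\alpha$ is a genuine $b$-form, so $d\alpha$ is the extended exterior derivative, but $\partial/\partial t$ is a smooth vector field whose contractions commute with this extension by Lemma \ref{decomposition} (the decomposition $\alpha = (df/f)u + \beta$ restricts cleanly to each slice $t = \mathrm{const}$). Thus all identities above hold verbatim in $b$-calculus, and the proof is complete.
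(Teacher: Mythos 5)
Your argument is correct and is essentially the paper's own proof: the paper likewise forms the symplectization $(M\times\R, d(e^t\alpha))$, observes that $\partial/\partial t$ is a Liouville vector field transverse to $M\times\{0\}$, and recovers $\alpha$ as $\iota_{\partial/\partial t}\,\omega$ restricted to that slice. Your version merely spells out the computation $\iota_{\partial/\partial t}\omega=e^t\alpha$ and $\mathcal{L}_{\partial/\partial t}\omega=\omega$ that the paper leaves implicit.
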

	
	\begin{remark}
		Another close relation between the symplectic and the contact world is the contactization: take an exact symplectic manifold, i.e. $(M,d\beta)$, then $(M\times \R,\beta+dt)$, where $t$ is the coordinate on $\R$, is contact. This remains true in the $b$-case. Furthermore, it is clear that by this construction, we obtain $b$-contact forms of the first type, as the Reeb vector field is given by $\frac{\partial}{\partial t}$.
	\end{remark}
	
	\section{Other singularities}\label{sec:othersingularities}

	In what follows, we consider contact structures with higher order singularities. Let $(M^n,Z)$ be a manifold with a distinguished hypersurface and let us assume that $Z$ is the zero level-set of a function $z$. The $b^m$-tangent bundle, which we denote by ${^{b^m}}TM$, can be defined to be the vector bundle whose sections are generated by
	$$\Big\{z^m\frac{\partial}{\partial z},\frac{\partial}{\partial x_1},\dots, \frac{\partial}{\partial x_{n-1}}\Big\}.$$
	The de Rham differential can be extended to this setting. The notion of $b$-symplectic manifolds then naturally extends and we talk about $b^m$-symplectic manifolds, see \cite{Scott,gmw1}. {In the same fashion, we can extend the notion of $b$-tangent and $b$-cotangent bundle, and thus also $b$-contact manifolds to the $b^m$-setting}\footnote{We remark here that further generalizations as contact structures over different Lie algebroids are possible.  The symplectic analogue is being treated in \cite{mirandascott}.} we say that a $b^m$-form $\alpha \in {^{b^m}}\Omega^n(M)$ is $b^m$-contact if $\alpha \wedge (d\alpha)^n \neq 0$ where the dimension of $M$ is $2n+1$. The proofs of the theorems of the previous sections, in particular Theorem \ref{contactDarbouxthm} and Proposition \ref{bJacobicorrespondance} and the construction carried out in Section \ref{symplectization}, generalize directly to this setting. For the sake of a clear notation, we do not write down the statements of the generalization, but only assert informally, that $b$ can be replaced by $b^m$ in the statements.
	
	In the next section, we are going to relate the topology of $b^m$-contact structures to smooth contact structures, but also a generalization of confoliations that are given by the following definition.
	
	\begin{definition}\label{def:foldedcontact}
	A smooth $1$-form $\alpha \in \Omega^1(M^{2n+1})$ is a \emph{folded contact form} if $\alpha \wedge (d\alpha)^n$ intersect the zero-section of the bundle $\bigwedge^{2n+1} T^*M$ transversally.
	\end{definition}

	 By the transversality, the set where the contact condition fails is given by a hypersurface, that we call the \emph{folding hypersurface}. 	As mentioned before, this is a a generalization of confoliation, see \cite{et}, where the $1$-form is asked to satisfy $\alpha \wedge (d\alpha)^n \geq 0$ (for positive confoliation).
	
	We do not enter in a more detailed study of folded contact forms but only remark that
folded contact structures appear in the work of \cite{martinet2,jazh, jz}, \textcolor{black}{where folded contact forms appeared under the name of \emph{singular contact form with structurally smooth hypersurface}}.
	
	\section{Desingularization of $b^m$-contact structures}\label{sec:desingularization}

	In this section, we desingularize singular contact structures and consequently explain the relation to smooth contact structures. The proof is based on the idea of \cite{gmw1}. However, in contrast to the symplectic case, we need an additional assumption in order to desingularize the $b^m$-contact form.
	
	Recall that from Lemma \ref{decomposition}, it follows that a $b^m$-form $\alpha \in {^{b^m}}\Omega^1(M)$ decomposes $\alpha=u\frac{dz}{z^m}+\beta$ where $u\in C^{\infty}(M)$ and $\beta \in \Omega^1(M)$. In order to desingularize the $b^m$-contact forms, we will assume that $\beta$ is the pull-back under the projection of a one-form defined on $Z$.
	
	\begin{definition}\label{def:bcontactconvex}
		We say that a $b^m$-contact form $ \alpha$ is almost convex if $\beta= \pi^*\tilde{\beta}$, where $\pi:\mathcal{N}(Z) \to Z$ is the projection from a tubular neighbourhood of $Z$ to the critical set and $\tilde{\beta}\in \Omega^1(Z)$. We will abuse notation and write $\beta \in \Omega^1(Z)$. We say that a $b^m$-contact \textcolor{black}{form} is convex if $\beta \in \Omega^1(Z)$ and $u \in C^\infty(Z)$.
	\end{definition}
	
	Note that the this notion is to be compared to the one of convex hypersurfaces, which we will recall in the next section. As we will see in the next lemma, almost convex $b^m$-contact structures are semi-locally isotopic to convex ones.
	
	\begin{lemma}
		Let $(M, \alpha)$ be an almost convex $b^m$-contact manifold and let the critical hypersurface $Z$ be compact. Then there exists a neighbourhood around the critical set $\mathcal{U}\supset Z$, such that $\alpha$ is isotopic to a convex $b^m$-contact form relative to $Z$ on $\mathcal{U}$.
	\end{lemma}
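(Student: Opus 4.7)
The plan is to apply the $b^m$-version of the semi-local stability Theorem \ref{semi-local} to a linear interpolation between $\alpha$ and a natural convex candidate. Write the almost-convex form as $\alpha = u\,\frac{dz}{z^m} + \beta$ with $u\in C^\infty(M)$ and $\beta \in \Omega^1(Z)$ (identified with its pull-back via $\pi:\mathcal{N}(Z)\to Z$). Define the convex candidate
\begin{equation*}
\alpha' := (\pi^*(u|_Z))\,\frac{dz}{z^m} + \beta,
\end{equation*}
obtained by replacing $u$ by the pull-back of its restriction to $Z$. By construction $\alpha'$ satisfies Definition \ref{def:bcontactconvex}.

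Next I would consider the straight-line path
\begin{equation*}
\alpha_t := (1-t)\alpha + t\alpha' = u_t\,\frac{dz}{z^m} + \beta, \qquad u_t := (1-t)u + t\,\pi^*(u|_Z), \qquad t\in[0,1].
\end{equation*}
Since $u_t|_Z = u|_Z$ for every $t$, we have $\alpha_t|_Z = \alpha|_Z$ as $b^m$-forms. The $b^m$-contact condition $\alpha_t\wedge(d\alpha_t)^n \ne 0$ is open, and because $Z$ is compact I can choose one tubular neighbourhood $\mathcal{U}\supset Z$ on which $\alpha_t$ is $b^m$-contact \emph{uniformly} in $t\in[0,1]$. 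This compactness step is where the hypothesis on $Z$ is really used; it converts the pointwise openness of the contact condition into a uniform one.

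Finally, I would invoke the $b^m$-analogue of Theorem \ref{semi-local} (the excerpt explicitly states that the $b$-theorems, including the semi-local stability result, transfer verbatim to the $b^m$-setting). Applied to the path $\alpha_t$ on $\mathcal{U}$, it produces an isotopy $\psi_t : \mathcal{U} \to M$ with $\psi_t|_Z = \Id$ and $\psi_t^*\alpha_t = \lambda_t\,\alpha$ for a family of nowhere-vanishing smooth functions $\lambda_t$. Specialising to $t=1$ gives $\psi_1^*\alpha' = \lambda_1\,\alpha$, so $\alpha$ is isotopic, relative to $Z$, to the convex $b^m$-contact form $\alpha'$ on $\mathcal{U}$, as required.

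The only mildly non-trivial point is ensuring the path stays $b^m$-contact throughout on a common neighbourhood of $Z$; everything else is bookkeeping. There is no genuine analytic obstacle because the correction $u_t - u = t(\pi^*(u|_Z) - u)$ vanishes along $Z$, so $\alpha_t$ agrees with $\alpha$ on $Z$ in the $b^m$-sense and the compactness of $Z$ gives a uniform neighbourhood on which the Moser equation of Theorem \ref{semi-local} is solvable with the required boundary behaviour $\psi_t|_Z = \Id$.
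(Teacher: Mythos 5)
Your proof is correct and follows essentially the same route as the paper: replace $u$ by the pull-back of $u|_Z$ to get the convex candidate, take the linear path (which is a path of $b^m$-contact forms near $Z$ since the two forms agree on $Z$), and conclude via the $b^m$-version of the semi-local stability theorem. Your extra remarks on the uniform choice of neighbourhood via compactness of $Z$ only make explicit what the paper leaves implicit.
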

	
	\begin{proof}
		Let $\alpha= u\frac{dz}{z^m}+\beta$ where $u \in C^\infty(M)$ and $\beta \in \Omega^1(Z)$. Put $\tilde{\alpha}=u_0\frac{dz}{z^m}+\beta$, where $u_0=u| _Z \in C^\infty(Z)$, which is convex. Take the linear path between the two $b^m$-contact structures, which is a path of $b^m$-contact structures because $\xi$ and $\tilde{\xi}$ equal on $Z$. Applying Theorem \ref{semi-local}, we obtain that there exist a local diffeomorphism $f$ preserving $Z$ and a non-vanishing function $\lambda$ such that on a neighbourhood of $Z$, $f^*{\alpha}=\lambda \tilde{\alpha}$.
	\end{proof}
	
	The next lemma gives intuition on this definition and gives a geometric characterization of the almost-convexity in terms of the $f_{\epsilon}$-desingularized symplectization.
	
	\begin{lemma}\label{convexLiouville}
		A $b^m$-contact manifold $(M, \alpha)$ is almost-convex if and only if the vector field $\frac{\partial}{\partial t}$ is a Liouville vector field in the desingularization of the $b^m$-symplectic manifold obtained by the symplectization of $(M, \alpha)$. Here $t$ denotes the coordinate of the symplectization.
	\end{lemma}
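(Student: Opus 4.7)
The plan is to compute the symplectization and its $f_\epsilon$-desingularization explicitly using the decomposition $\alpha = u\frac{dz}{z^m} + \beta$, with $u\in C^\infty(M)$ and $\beta\in\Omega^1(M)$, and then track the Liouville condition on $\frac{\partial}{\partial t}$. Using the fact that $\frac{dz}{z^m}$ is $b^m$-closed and the Leibniz rule, the symplectic form decomposes as
$$\omega = d(e^t\alpha) = d(e^t u)\wedge\tfrac{dz}{z^m} + d(e^t\beta),$$
which realizes the canonical form of Lemma \ref{decomposition}. The desingularization of Theorem \ref{symplecticdesingularization} then replaces $\tfrac{dz}{z^m}$ by $df_\epsilon$, giving $\omega_\epsilon = d(e^t u)\wedge df_\epsilon + d(e^t\beta)$. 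Using $d^2 f_\epsilon = 0$, this rewrites as $\omega_\epsilon = d(e^t\alpha_\epsilon)$ with $\alpha_\epsilon := u\, df_\epsilon + \beta$.

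By Cartan's magic formula and closedness of $\omega_\epsilon$, the Liouville condition $\mathcal{L}_{\partial/\partial t}\omega_\epsilon = \omega_\epsilon$ is equivalent to $\iota_{\partial/\partial t}\omega_\epsilon = e^t\alpha_\epsilon$, which holds precisely when $\alpha_\epsilon$ is independent of the symplectization coordinate $t$, i.e., descends to a smooth $1$-form on $M$. For the forward direction, if $\alpha$ is almost-convex then $\beta=\pi^{\ast}\tilde\beta$ is a pullback from $Z$ in a tubular neighborhood; combined with $u\in C^\infty(M)$ and $f_\epsilon$ depending only on $z$, this makes $\alpha_\epsilon$ a genuine smooth $1$-form on $M$, so the Liouville property is immediate.

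For the converse, assuming $\frac{\partial}{\partial t}$ is Liouville for $\omega_\epsilon$, the primitive $\iota_{\partial/\partial t}\omega_\epsilon$ produces an $e^t$-weighted smooth $1$-form on $M$; comparing with the formula $\omega_\epsilon = d(e^t\alpha_\epsilon)$ derived above forces the $\beta$-component in the decomposition of $\alpha$ to coincide with a pullback from $Z$ near the critical set, recovering almost-convexity.

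The main obstacle is the converse direction: I must rule out that a non-pullback contribution to $\beta$ could be absorbed into the $\mu$-component of the decomposition $\omega = \frac{dz}{z^m}\wedge\mu + \nu$ that the desingularization uses. The key technical step is to isolate the canonical decomposition pinned down by the requirement that $d\mu=0$ near $Z$ (which is needed for $\omega_\epsilon$ to be closed), and to verify that this canonical choice yields a smooth $\alpha_\epsilon$ on $M$ precisely when the almost-convex condition holds.
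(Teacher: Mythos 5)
Your computation reproduces exactly the formula in the paper's proof: the symplectization is $\omega=d(e^tu)\wedge\frac{dz}{z^m}+d(e^t\beta)$ and the desingularization is $\omega_\epsilon=d(e^tu)\wedge df_\epsilon+d(e^t\beta)=d\bigl(e^t(u\,df_\epsilon+\beta)\bigr)$, so the route is the same as the paper's. However, as written your forward direction proves too much: since $u\in C^\infty(M)$, $\beta\in\Omega^1(M)$ and $f_\epsilon$ depends only on $z$, the primitive $\alpha_\epsilon=u\,df_\epsilon+\beta$ is \emph{always} independent of $t$, so your criterion ``$\frac{\partial}{\partial t}$ is Liouville iff $\alpha_\epsilon$ descends to $M$'' is satisfied unconditionally and almost-convexity never actually enters the argument. (A smaller point: $\mathcal{L}_{\partial/\partial t}\omega_\epsilon=\omega_\epsilon$ is equivalent to $\iota_{\partial/\partial t}\omega_\epsilon-e^t\alpha_\epsilon$ being closed, not to equality.) This is a sign that the entire content of the lemma sits in the issue you raise in your last paragraph and then defer: Theorem \ref{symplecticdesingularization} defines $\omega_\epsilon$ with respect to a particular decomposition of the $b^m$-form near the critical hypersurface, with the singular coefficients pulled back from it, and it is only relative to that choice that the equivalence with almost-convexity is a nontrivial statement.

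Consequently the converse direction --- extracting $\beta\in\Omega^1(Z)$ from $\mathcal{L}_{\partial/\partial t}\omega_\epsilon=\omega_\epsilon$ --- is not proved in your proposal; you name the ``key technical step'' (pinning down the canonical decomposition and verifying that the Liouville identity fails for the non-pullback part of $\beta$) but do not carry it out. The paper's own proof is terse here, settling the converse in one sentence from the same formula, but it does assert both implications, whereas your text leaves the ``only if'' half as a program. To close the gap you would need to (i) make explicit which decomposition $\omega=\frac{dz}{z^m}\wedge\mu+\nu$ the desingularization uses for $d(e^t\alpha)$, and (ii) compute $\mathcal{L}_{\partial/\partial t}\omega_\epsilon$ for that choice and show the obstructing terms vanish precisely when $\beta$ is a pullback from $Z$ near the critical set; until then neither implication is established in a form that distinguishes almost-convex structures from general ones.
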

	
	\begin{proof}
		Let $(M, \alpha)$ be a almost-convex $b^m$-contact manifold. The symplectization is given by $(M\times \R, \omega= d(e^t\alpha))$. The desingularization technique of Theorem \ref{symplecticdesingularization} produces a family of symplectic forms $\omega_\epsilon=ue^t dt \wedge df_\epsilon+e^t dt\wedge \beta +e^t du \wedge df_\epsilon+e^t d\beta$. From almost-convexity it follows that $\frac{\partial}{\partial t}$ preserves $\omega_\epsilon$, so $\frac{\partial}{\partial t}$ is a Liouville vector field.
		
		To prove the converse, assume that $\frac{\partial}{\partial t}$ is a Liouville vector field in $(M,\omega_\epsilon)$. It follows from the fact that $\mathcal{L}_{\frac{\partial}{\partial t}}\omega_\epsilon=\omega_\epsilon$ that $\beta \in \Omega^1(Z)$.
	\end{proof}
	
	We will see that under almost-convexity, the $b^{2k}$-contact forms can be desingularized.
	
	\begin{theorem}\label{thm:desingularization}
		Let $(M^{2n+1}, \alpha)$ a $b^{2k}$-contact manifold with critical hypersurface $Z$. Assume that $\alpha$ is almost convex. Then there exists a family of contact forms $\alpha_\epsilon$ which coincides with the $b^{2k}$-contact form $\alpha$ outside of an $\epsilon$-neighbourhood of $Z$. The family of bi-vector fields $\Lambda_{\alpha_\epsilon}$  and the family of vector fields $R_{\alpha_\epsilon}$ associated to the Jacobi structure of the contact form $\alpha_\epsilon$ converges to the bivector field $\Lambda_{\alpha}$ and to the vector field $R_\alpha$ in the $C^{2k-1}$-topology as $\epsilon \to 0$.
	\end{theorem}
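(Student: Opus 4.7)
The plan is to imitate the $b^{2k}$-symplectic desingularization of Theorem \ref{symplecticdesingularization} directly at the level of the contact form, exploiting almost-convexity to keep the change in $\alpha$ one-dimensional in the transverse direction. First, I would use the lemma that precedes the statement, together with Theorem \ref{semi-local}, to isotope $\alpha$ near $Z$ so that on a tubular neighbourhood one has $\alpha=u\,\frac{dz}{z^{2k}}+\beta$ with $u\in C^{\infty}(M)$ and $\beta=\pi^{\ast}\tilde\beta$ for some $\tilde\beta\in\Omega^{1}(Z)$. Then, following \cite{gmw1}, I choose a smooth odd function $f_\epsilon\colon\R\to\R$ with $f_\epsilon(z)=-\tfrac{1}{(2k-1)z^{2k-1}}$ for $|z|\ge\epsilon$ and $f'_\epsilon(z)>0$ for all $z$, and define $\alpha_\epsilon:=u\,df_\epsilon+\beta$. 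By construction $\alpha_\epsilon$ is a smooth $1$-form on $M$ coinciding with $\alpha$ outside the $\epsilon$-neighbourhood of $Z$.

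To verify that $\alpha_\epsilon$ is contact, I would use $df_\epsilon\wedge df_\epsilon=0$ to expand
\[
\alpha_\epsilon\wedge(d\alpha_\epsilon)^{n}=df_\epsilon\wedge\bigl[u(d\beta)^{n}+n\,\beta\wedge du\wedge(d\beta)^{n-1}\bigr]+\beta\wedge(d\beta)^{n},
\]
and observe that $\beta\wedge(d\beta)^{n}=\pi^{\ast}\bigl(\tilde\beta\wedge(d\tilde\beta)^{n}\bigr)$ vanishes identically because $\dim Z=2n$. The bracketed $2n$-form is precisely the coefficient of $\frac{dz}{z^{2k}}$ in the analogous expansion of the $b^{2k}$-form $\alpha\wedge(d\alpha)^{n}$, so the $b^{2k}$-contact hypothesis on $\alpha$ forces it to be nowhere vanishing on $M$; since $df_\epsilon$ is also nowhere zero, $\alpha_\epsilon\wedge(d\alpha_\epsilon)^{n}$ is a genuine smooth volume form.

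For convergence, outside the $\epsilon$-tube $\alpha_\epsilon$ and $d\alpha_\epsilon$ agree with $\alpha$ and $d\alpha$ exactly, so the Jacobi data match there. Inside, $\Lambda_{\alpha_\epsilon}$ and $R_{\alpha_\epsilon}$ are determined by the algebraic recipes recalled in Appendix \ref{app:Jacobi} as smooth rational expressions in the components of $\alpha_\epsilon$ and $d\alpha_\epsilon$, whose only $\epsilon$-dependent ingredients are $f_\epsilon$ and $f'_\epsilon$. The same $C^{2k-1}$-estimate that \cite{gmw1} establishes for the symplectic desingularization, applied to the identical odd function $f_\epsilon$, then transfers verbatim and yields $\Lambda_{\alpha_\epsilon}\to\Lambda_{\alpha}$ and $R_{\alpha_\epsilon}\to R_{\alpha}$ in the $C^{2k-1}$-topology.

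The step I expect to be the main obstacle is the convergence bookkeeping, together with confirming that almost-convexity is genuinely necessary: without it, $z$-derivatives of $\beta$ would mix with $df_\epsilon$ and spoil both the cancellation $\beta\wedge(d\beta)^{n}=0$ and the $C^{2k-1}$-rate. A conceptually cleaner cross-check, which I would include, is to appeal to Lemma \ref{convexLiouville}: desingularize the symplectization $(M\times\R,d(e^{t}\alpha))$ as a $b^{2k}$-symplectic manifold via Theorem \ref{symplecticdesingularization}; almost-convexity ensures that $\partial_{t}$ remains a Liouville vector field for the resulting smooth symplectic form $\omega_\epsilon$; then $\iota_{\partial_{t}}\omega_\epsilon\bigr|_{M\times\{0\}}$ recovers $\alpha_\epsilon$ up to rescaling, inheriting its convergence from the symplectic result.
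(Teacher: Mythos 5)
Your proposal follows essentially the same route as the paper: decompose $\alpha=u\,\frac{dz}{z^{2k}}+\beta$ using almost-convexity, set $\alpha_\epsilon=u\,df_\epsilon+\beta$ with the odd function $f_\epsilon$ of \cite{gmw1}, verify the contact condition via the expansion $\alpha_\epsilon\wedge(d\alpha_\epsilon)^n=df_\epsilon\wedge\bigl(u(d\beta)^n+n\,\beta\wedge du\wedge(d\beta)^{n-1}\bigr)$, and obtain the $C^{2k-1}$-convergence of the Jacobi data from the fact that the desingularization only replaces the $\frac{\partial}{\partial z}$-coefficient $z^{2k}$ by $1/f'_\epsilon(z)$. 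One small correction: no odd function with $f'_\epsilon>0$ everywhere can equal $-\frac{1}{(2k-1)z^{2k-1}}$ exactly for all $|z|\ge\epsilon$, since that expression is large and positive at $z=-\epsilon$ but large and negative at $z=+\epsilon$; one must add the constants $\mp 2\epsilon^{-(2k-1)}$ on the two sides as in \cite{gmw1}, which is harmless because only $df_\epsilon$ enters the construction. Your preliminary isotopy to a convex form is unnecessary (almost-convexity already gives $\beta=\pi^*\tilde\beta$ near $Z$, which is all the computation uses), and your symplectization cross-check is precisely the alternative argument the paper sketches after Theorem \ref{thm:desingularizationcontactb2k+1}.
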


	We call $\alpha_\epsilon$ the $f_\epsilon$-desingularization of $\alpha$.
	
	A corollary of this is that almost-convex $b^m$-contact forms admit a family of contact structures if $m$ is even, and a family of folded-type contact structures is $m$ is odd.
	
	The proof of this theorem follows from the definition of convexity and makes use of the family of functions introduced in \cite{gmw1}.

	\begin{proof}
		By the decomposition lemma, $\alpha=u\frac{dz}{z^m}+\beta$. As $\alpha$ is almost convex, the contact condition writes down as follows:
		$$\alpha \wedge (d\alpha)^n= \frac{dz}{z^m}\wedge (u (d\beta)^n+n\beta\wedge du\wedge(d\beta)^{n-1})\neq0.$$
		In an $\epsilon$-neighbourhood, we replace $\frac{dz}{z^m}$ by a smooth form. The expression depends on the parity of $m$.
		
		Following \cite{gmw1} we consider an odd smooth function
			$f \in \mathcal{C}^\infty(\mathbb{R})$ satisfying $f'(x) > 0$ for all $x \in \left[-1,1\right] $ and satisfying outside that
			\begin{equation}
			f(x) = \begin{cases}
			\frac{-1}{(2k-1)x^{2k-1}}-2& \text{for} \quad x < -1,\\
			\frac{-1}{(2k-1)x^{2k-1}}+2& \text{for} \quad x > 1.\\
			\end{cases}
			\end{equation}
			Let $f_\epsilon(x)$ be defined as $\epsilon^{-(2k -1)}f(x/\epsilon)$.
		
		We obtain the family of globally defined $1$-forms given by $\alpha_\epsilon= udf_\epsilon+\beta$ that agrees with $\alpha$ outside of the $\epsilon$-neighbourhood. Let us check that $\alpha_\epsilon$ is contact inside this neighbourhood. Using the almost-convexity condition, the non-integrability condition on the $b^m$-form $\alpha$ writes down as follows:
		$$\alpha_\epsilon \wedge (d\alpha_\epsilon)^n = dz \wedge (f_\epsilon'(z)u d\beta+f_\epsilon'(z)\beta \wedge du -\beta \wedge\frac{\partial \beta}{\partial z} ).$$
		We see that $\alpha_\epsilon \wedge d\alpha_\epsilon= f_\epsilon'(z)z^m\alpha\wedge d\alpha$ and hence $\alpha_\epsilon$ is contact.
		
		We denote by $\Lambda_\alpha$ and $R_\alpha$ the bi-vector field and vector field of the $b$-contact form $\alpha$. Now let us check that the bi-vector field $\Lambda_{\alpha_\epsilon}$ and the vector field of $R_{\alpha_\epsilon}$ corresponding to the Jacobi structure of the desingularization converge to $\Lambda_\alpha$ and $R_\alpha$ respectively.
		
		Let us write $R_\alpha$ and $\Lambda_\alpha$ in a neighbourhood of a point $p\in Z$.
		
		$$R_\alpha=g z^{2k}\frac{\partial}{\partial z}+X, \quad \Lambda_\alpha= z^{2k}\frac{\partial}{\partial z} \wedge Y_1+Y_2 \wedge Y_3 $$
		where $g \in C^\infty(M)$ and $X,Y_i \in \mathfrak{X}(Z)$ for $i=1,2,3$. The Jacobi structure associated to the desingularization is given by
		
		$$R_{\alpha_\epsilon}=g \frac{1}{f'_\epsilon(z)} \frac{\partial}{\partial z}+X, \quad \Lambda_{\alpha_\epsilon}= \frac{1}{f'_\epsilon(z)}\frac{\partial}{\partial z} \wedge Y_1+Y_2 \wedge Y_3.$$
		The $C^{2k-1}$-convergence follows from this formulas.
	\end{proof}
	
	Similarily, we can prove a desingularization theorem for $b^{2k+1}$-contact forms.
	
	\begin{theorem}\label{thm:desingularizationcontactb2k+1}
		Let $(M^{2n+1}, \alpha)$ a $b^{2k+1}$-contact manifold with critical hypersurface $Z$. Assume that $\alpha$ is almost convex. Then there exists a family of folded contact forms $\alpha_\epsilon$ which coincides with the $b^{2k+1}$-contact form $\alpha$ outside of an $\epsilon$-neighbourhood of $Z$.
	\end{theorem}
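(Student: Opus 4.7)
The plan is to imitate the construction of Theorem \ref{thm:desingularization} but with an \emph{even} auxiliary function $f$, reflecting the fact that a primitive of $x^{-(2k+1)}$ is the even function $-\frac{1}{2kx^{2k}}$. By Lemma \ref{decomposition} I write $\alpha=u\,\frac{dz}{z^{2k+1}}+\beta$, and the almost convexity hypothesis (Definition \ref{def:bcontactconvex}) lets me take $\beta=\pi^{*}\tilde\beta$ with $\tilde\beta\in\Omega^{1}(Z)$, so that neither $\beta$ nor $d\beta$ contains $dz$. Since $\beta\wedge(d\beta)^{n}$ is a $(2n{+}1)$-form pulled back from the $2n$-dimensional $Z$, it vanishes identically, and a direct expansion gives
\[
\alpha\wedge(d\alpha)^{n} \;=\; \frac{dz}{z^{2k+1}}\wedge\Phi,\qquad \Phi \;:=\; u\,(d\beta)^{n} + n\,\beta\wedge d_{Z}u\wedge (d\beta)^{n-1},
\]
so the $b^{2k+1}$-contact hypothesis translates into $\Phi$ being a local volume form on $Z$.

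Next I would fix a smooth even function $f\in C^{\infty}(\R)$ agreeing with $-\frac{1}{2kx^{2k}}$ on $\{|x|\ge 1\}$, with $f''(0)\neq 0$ and no other zeros of $f'$ in $[-1,1]$. Setting $f_{\epsilon}(x):=\epsilon^{-2k}f(x/\epsilon)$, one has $df_{\epsilon}=\frac{dz}{z^{2k+1}}$ outside $\{|z|\le\epsilon\}$, and the smooth $1$-form
\[
\alpha_{\epsilon} \;:=\; u\,df_{\epsilon}+\beta
\]
coincides with $\alpha$ outside the $\epsilon$-tubular neighbourhood of $Z$. Replaying the previous computation with the closed smooth form $df_{\epsilon}$ in place of $\frac{dz}{z^{2k+1}}$, and using $df_{\epsilon}\wedge df_{\epsilon}=0$ together with $\beta\wedge(d\beta)^{n}=0$, yields the key identity
\[
\alpha_{\epsilon}\wedge(d\alpha_{\epsilon})^{n} \;=\; f'_{\epsilon}(z)\,dz\wedge\Phi.
\]

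Because $dz\wedge\Phi$ is a local volume form near $Z$, the zero set of $\alpha_{\epsilon}\wedge(d\alpha_{\epsilon})^{n}$ is exactly $\{f'_{\epsilon}=0\}=Z$, and the linear vanishing of $f'_{\epsilon}$ at $z=0$ (guaranteed by $f''(0)\neq 0$) gives transversality to the zero section of $\bigwedge^{2n+1}T^{*}M$ along $Z$. By Definition \ref{def:foldedcontact}, $\alpha_{\epsilon}$ is then a folded contact form with folding hypersurface $Z$. The only genuinely non-formal ingredient is the one-variable construction of the even profile $f$: smoothness, matching the prescribed behaviour at $|x|=1$ with all derivatives, the non-degeneracy $f''(0)\neq 0$, and the absence of other critical points in $[-1,1]$ must be arranged simultaneously. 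This is routine and strictly parallels the odd profile used for the even-$m$ desingularization in Theorem \ref{thm:desingularization}; once the profile is in hand, the rest of the argument is the direct computation above.
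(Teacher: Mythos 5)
Your argument is essentially the paper's own proof: decompose $\alpha=u\frac{dz}{z^{2k+1}}+\beta$ using almost convexity, replace $\frac{dz}{z^{2k+1}}$ by $df_\epsilon$ for an even profile $f_\epsilon$ whose derivative vanishes transversally at $0$, and read off $\alpha_\epsilon\wedge(d\alpha_\epsilon)^n=f_\epsilon'(z)\,dz\wedge\Phi$ to conclude the folded contact condition. The only point to adjust is that your profile $-\frac{1}{2kx^{2k}}$ is undefined for $k=0$ (the $b^{1}$ case, which is precisely the case invoked in Corollary \ref{cor:existencefoldedcontact}); there one must take $f(x)=\log|x|$ outside a compact set, as the paper does, and the rest of your computation goes through unchanged.
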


	\begin{proof}
		By the decomposition lemma, $\alpha=u\frac{dz}{z^{2k+1}}+\beta$. As $\alpha$ is almost convex, the contact condition writes down as follows:
		$$\alpha \wedge (d\alpha)^n= \frac{dz}{z^{2k+1}}\wedge (u (d\beta)^n+n\beta\wedge du\wedge(d\beta)^{n-1})\neq0.$$
		In an $\epsilon$-neighbourhood, we replace $\frac{dz}{z^{2k+1}}$ by a smooth form.
		
		Following \cite{gmw1} we consider an even smooth function given by $f_\epsilon(x):=\frac{1}{\epsilon^{2k}}f(\frac{x}{\epsilon})$ where $f\in C^\infty(\R)$ satisfies
	\begin{itemize}
		\item $f>0$ and $f(x)=f(-x)$,
		\item $f'(x)>0$ if $x<0$,
		\item $f(x)=-x^2+2$ if $x\in [-1,1]$,
		\item $f(x)=\log(|x|)$ if $k=0, x\in \R\setminus [-2,2]$.
		\item $f(x)=\frac{-1}{(2k+2)x^{2k+2}}$ if $k>0$, $x\in \R \setminus [-2,2]$.
	\end{itemize}
	
	We define $\alpha_\epsilon=udf_\epsilon +\beta$. We see that $\alpha_\epsilon \wedge d\alpha_\epsilon= f_\epsilon'(z)dz\wedge(u(d\beta)^n+n\beta \wedge du \wedge (d\beta)^{n-1})$ and hence $\alpha_\epsilon$ is folded contact: indeed $f_\epsilon'$ vanishes transversally at zero, and away from zero, this last expression is non-zero.
		
	\end{proof}
	
	An alternative proof of this theorem would be to use the symplectization as explained in Section \ref{symplectization} and to use immediately Theorem \ref{symplecticdesingularization} in the symplectization. The almost convex condition makes sure that the vector field in the direction of the symplectization is Liouville in the desingularization, see Lemma \ref{convexLiouville}. Hence the induced structure is contact. Without the almost-convexity, the induced structure of the desingularized symplectic form on the initial manifold is not necessarily contact. This is saying that almost-convexity is a sufficient condition, but not a necessary condition to apply the desingularization method.

	In the next section of the article, we will see that in presence of convex hypersurface in contact manifolds, a construction that transforms the convex hypersurface into the critical set of a $b^m$-contact manifold, holds.
	
	\section{Existence of singular contact structures on a prescribed submanifold}\label{sec:existence}

	Existence of contact structures on odd dimensional manifolds has been one of the leading questions in the field. The first result in this direction was proved for open odd-dimensional manifolds by Gromov \cite{Gromov}. The case for closed manifolds turned out to be much more subtle. The $3$-dimensional case was proved by Martinet--Lutz \cite{lutz,martinet}. The existence problem for higher dimensionswas solved  the celebrated article by Borman--Eliashberg--Murphy \cite{bem}.
	\begin{theorem}
		Any almost contact closed manifold $M^{2n+1}$ admits a contact structure.
	\end{theorem}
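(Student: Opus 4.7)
The plan is to reduce the existence problem to an $h$-principle, following the strategy of Borman--Eliashberg--Murphy. Since we are given an almost contact structure, the formal (algebraic-topological) data is already in place: a hyperplane field $\xi$ together with a complex structure on it (or equivalently, a reduction of the structure group to $U(n)\times 1$). The goal is to homotope this formal data through formal contact structures to a genuine contact structure. The conceptual core is that, while general contact structures are rigid and do not satisfy an $h$-principle, a distinguished class of \emph{overtwisted} contact structures does.

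First, I would introduce the higher-dimensional analogue of the overtwisted disk. In dimension $3$ the overtwisted disk is a local model due to Eliashberg; in higher dimensions, the correct notion (the \emph{plastikstufe} of Niederkr\"uger, later refined by Borman--Eliashberg--Murphy) is a piecewise-smooth model built by attaching a higher-dimensional family of $3$-dimensional overtwisted disks parametrized by a Legendrian stratified space in the standard contact $\mathbb{R}^{2n-1}$. One then defines a contact structure on $M^{2n+1}$ to be overtwisted if it contains an embedded copy of this model, and likewise a formal contact structure is formally overtwisted if it contains such a model in an appropriate homotopical sense.

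The central step is the $h$-principle: every formally overtwisted almost contact structure is homotopic, through formal contact structures, to a genuine overtwisted contact structure, and this homotopy can be taken to be $C^0$-small outside a neighbourhood of a prescribed overtwisted model. To deploy this, I would first upgrade the given almost contact structure to a formally overtwisted one by inserting a plastikstufe into a Darboux-like chart; this is cheap because the insertion can be performed on the formal level without affecting the homotopy class of the almost contact structure (away from that chart). Then I would appeal to the parametric $h$-principle to produce the genuine contact structure. In dimension $3$, the last step is replaced by the classical Martinet--Lutz twist construction on any open book decomposition provided by Alexander's theorem; in dimension $5$, by the open book constructions of Casals--Pancholi--Presas.

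The hard part is unquestionably the $h$-principle itself. Its proof proceeds by induction on a handle decomposition of $M$: over each handle one must extend a contact structure defined near the attaching sphere across the core, and the overtwistedness hypothesis is used precisely to overcome the monodromy obstruction that would otherwise obstruct this extension. The technical heart involves controlled deformations of Legendrian submanifolds inside a local overtwisted region and a delicate \emph{wrinkling} argument to straighten out singularities of the candidate contact structure against the overtwisted model. In our setting, once this theorem is accepted, it serves as the smooth input into the desingularization and convex-surface constructions of the preceding sections, yielding, through Theorem \ref{thm:desingularizationcontactb2k+1}, the promised existence of folded contact forms on any almost contact manifold.
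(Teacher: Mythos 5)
The paper does not prove this statement: it is quoted verbatim from Borman--Eliashberg--Murphy \cite{bem} (with the low-dimensional cases due to Martinet--Lutz and Casals--Pancholi--Presas), and your sketch is a faithful outline of exactly the strategy of that cited work --- overtwisted models, the plastikstufe, and the $h$-principle for formally overtwisted structures. Be aware, though, that what you have written is a roadmap rather than a proof: the entire difficulty lives in the $h$-principle you merely describe, so for the purposes of this paper the honest move is to cite \cite{bem} as the authors do rather than to claim an argument.
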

	
	We give in this section an answer to the question whether or not closed manifolds also admit $b^m$-contact structures. The result relies on convex hypersurface theory, which was introduced by Giroux \cite{Giroux}. \textcolor{black}{Throughout the next two sections, the hypersurfaces in consideration will always be embedded hypersurfaces.}

	\begin{definition}
		Let $(M,\ker \alpha)$ be a contact manifold. A vector field $X$ is contact if it preserves $\xi$, that is $\mathcal{L}_X \alpha= g\alpha$ for $g\in C^\infty(M)$. An embedded hypersurface $Z$ in $M$ is convex if there exists a contact vector field $X$ that is transverse to $Z$.
	\end{definition}

	It follows from this definition that the contact {structure} can be written under vertically invariant form in a neighbourhood of $Z$, that is $\alpha=g\cdot (u dt +\beta)$, where the contact vector field $X$ is given by $\frac{\partial}{\partial t}$,  $u \in C^\infty(Z)$, $\beta \in \Omega^1(Z)$ and $g$ is a function that is not necessarily vertically invariant. Note that the definition of convex $b^m$-contact forms, that is Definition \ref{def:bcontactconvex}, is the analogue of this definition in the $b$-setting. As was proved by Giroux \cite{Giroux}, in dimension $3$, there generically all closed surfaces are convex.
	
	\begin{theorem}[\cite{Giroux}]
		Any $C^\infty$-generic closed surface  in a $3$-dimensional contact manifold is a convex surface.
	\end{theorem}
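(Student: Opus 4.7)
The plan is to employ Giroux's original characterization of convex surfaces in terms of the characteristic foliation. Given a closed oriented surface $\Sigma$ embedded in a contact $3$-manifold $(M,\xi=\ker\alpha)$, the singular line field $\xi\cap T\Sigma$ integrates to a one-dimensional singular foliation $\Sigma_\xi$, whose singularities are the tangency points where $\xi_p=T_p\Sigma$. The strategy is to reduce convexity to an intrinsic property of $\Sigma_\xi$, then show that property is generic.

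First, I would establish \emph{Giroux's criterion}: $\Sigma$ is convex if and only if $\Sigma_\xi$ admits a \emph{dividing set}, namely a properly embedded multicurve $\Gamma\subset\Sigma$ transverse to $\Sigma_\xi$ together with a decomposition $\Sigma\setminus\Gamma=\Sigma_+\sqcup\Sigma_-$, an area form $\omega$ on $\Sigma$, and a directing vector field $Y$ of $\Sigma_\xi$ satisfying $\pm\mathcal{L}_Y\omega>0$ on $\Sigma_\pm$ with $Y$ pointing outward from $\Sigma_+$ along $\Gamma$. The forward direction follows by putting the contact form in the vertically invariant model $\alpha=u\,dt+\beta$ near $\Sigma$ (with $\partial_t$ the contact vector field), and setting $\Gamma=\{u=0\}$; the contact condition then translates into the required sign conditions on $\mathcal{L}_Y\omega$. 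The reverse direction reconstructs an invariant contact form by a partition-of-unity interpolation of local invariant models near the two sides of $\Gamma$, furnishing the transverse contact vector field $\partial_t$.

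Second, I would show that if $\Sigma_\xi$ is \emph{Morse--Smale}, i.e.\ all singularities are non-degenerate (elliptic or hyperbolic), all closed leaves are hyperbolic, and there are no saddle connections, then a dividing set exists. The construction uses a Lyapunov function for a directing vector field $Y$, obtained by combining local Morse models at singularities with the gradient-like behaviour along regular orbits between hyperbolic periodic orbits; a suitable regular level gives a multicurve $\Gamma$ transverse to $\Sigma_\xi$ that separates the basins of expansion and contraction.

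Third, I would prove that the Morse--Smale condition is $C^\infty$-generic among characteristic foliations arising from embeddings. For an embedding $\iota:\Sigma\hookrightarrow M$, consider normal perturbations $\iota_f(p)=\exp_p(f(p)\nu_p)$ and analyze the induced variation of $\alpha|_{\Sigma_f}$: its first order term in $f$ contains $d(f\cdot\alpha(\nu))+f\cdot\iota_\nu d\alpha$, and the contact condition ensures that this assignment $f\mapsto\alpha|_{\Sigma_f}$ is submersive onto an infinite-dimensional space of $1$-forms on $\Sigma$ modulo rescaling. Combined with Peixoto's theorem, which asserts that Morse--Smale vector fields form an open and dense subset of the space of vector fields on a closed orientable surface, a parametric transversality argument produces a $C^\infty$-residual set of perturbations $f$ for which $\Sigma_\xi^{(f)}$ is Morse--Smale, hence convex.

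The main obstacle is the third step: one must verify that normal perturbations of the embedding generate enough variation of the directing vector field of the characteristic foliation to invoke Peixoto's genericity. This requires a careful local calculation showing that the linear map from normal perturbations $f\in C^\infty(\Sigma)$ to perturbations of the directing vector field has dense image in the $C^\infty$ topology, which is where the non-integrability of $\xi$ is crucially used.
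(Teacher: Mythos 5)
The paper does not prove this statement: it is imported verbatim from Giroux's \emph{Convexity in contact topology} and used as a black box, so there is no ``paper's own proof'' to compare against. Measured against Giroux's actual argument, your plan is a faithful reconstruction of the standard route: (i) convexity is equivalent to the characteristic foliation $\Sigma_\xi$ being divided by a multicurve, (ii) Morse--Smale characteristic foliations are divided, (iii) Morse--Smale is generic by Peixoto's theorem combined with a realization lemma saying that normal perturbations of the embedding realize all small perturbations of the directing vector field. Your first-order variation $d(f\,\alpha(\nu))+f\,\iota_\nu d\alpha$ is the correct Lie-derivative computation, and you correctly identify the realization lemma as the crux where non-integrability of $\xi$ enters.

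Two soft spots keep this at the level of a plan rather than a proof. First, in the converse direction of the Giroux criterion you propose a ``partition-of-unity interpolation of local invariant models''; convex combinations of contact forms (or of pairs $(u,\beta)$ in the invariant model $u\,dt+\beta$) are not automatically contact, and the standard construction instead builds the invariant form explicitly from the dividing data, e.g.\ $\beta=\iota_Y\omega$ with $u$ a function cutting out $\Gamma$ whose sign and divergence conditions are exactly what $\pm\mathcal{L}_Y\omega>0$ provides; one also needs the uniqueness-of-germs statement (two contact structures printing the same divided foliation on $\Sigma$ agree near $\Sigma$ up to isotopy) to conclude that the reconstructed invariant structure is the original one. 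Second, Peixoto's $C^\infty$-density of Morse--Smale fields is a theorem for closed \emph{orientable} surfaces (and in fact Giroux works with a slightly weaker generic condition than full Morse--Smale); you should either restrict to orientable $\Sigma$ or invoke the weaker sufficient condition. Neither issue breaks the strategy, but both must be addressed for the argument to close.
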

	
	In higher dimension, this result does not hold for generic hypersurfaces, see \cite{mori}. %However, even though genericity does not hold, examples are given by boundaries of tubular neighbourhoods of Legendrian submanifolds.
    {However in higher dimensions, a $C^0$-generic hypersurface is convex, as is proved in Theorem 1.1 in \cite{ep} (see also \cite{hh}). }

	In the theory of convex hypersurfaces, a fundamental role is played by the set $\Sigma$ given by the points of the convex hypersurface where the transverse contact vector field belongs to the contact distribution. It is a consequence of the non-integrability condition that $\Sigma$ is a codimension $1$ submanifold in $Z$. When $M$ is of dimension $3$, a connected component of $\Sigma$ is called the dividing curve. Loosely speaking, the dividing curves determine the germ of the contact structure on a neighbourhood of the convex surface. For a precise statement, see \cite{Giroux,Gi}.
	
	We will prove that convex hypersurfaces can be realized as the critical set of $b^{2k}$-contact structures. A similar result holds for $b^{2k+1}$-contact structures. However, in this case the critical set has two connected components, which correspond to two convex hypersurfaces arbitrarily close to a connected component of the given convex hypersurface.
	
	\begin{theorem}\label{Existencetheorem}
		Let $(M,\xi)$ be a contact manifold and let $Z$ be a convex hypersurface in $M$. Then $M$ admits a $b^{2k}$-contact structure for all $k$ that has $Z$ as critical set. The codimension $2$ submanifold $\Sigma$ corresponds to the set where the rank of the distribution drops and the induced structure is contact.
	\end{theorem}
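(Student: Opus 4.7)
The plan is to exploit the normal form of the contact structure near a convex hypersurface and replace the smooth transverse $dt$ by a singular $b^{2k}$-form, gluing back to the original contact structure outside a small neighborhood of $Z$. The computation closely parallels the desingularization of Theorem \ref{thm:desingularization}, run in reverse.

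Since $Z$ is convex, there is a contact vector field $X$ transverse to $Z$. In a tubular neighborhood $\mathcal{N}(Z)\cong Z\times(-\delta,\delta)$ adapted to $X=\partial/\partial t$, the contact form can be chosen vertically invariant, $\alpha = u\,dt + \beta$, with $u\in C^\infty(Z)$ and $\beta\in\Omega^1(Z)$ pulled back via the projection $\mathcal{N}(Z)\to Z$. Because $\beta\wedge(d\beta)^n$ is a $(2n+1)$-form on the $2n$-dimensional $Z$ and hence vanishes, expanding $\alpha\wedge(d\alpha)^n$ shows the contact condition on $\mathcal{N}(Z)$ is equivalent to
$$\Omega := u\,(d\beta)^n + n\,\beta\wedge du\wedge (d\beta)^{n-1}$$
being a volume form on $Z$. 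In particular $du\ne 0$ on $\Sigma=\{u=0\}$, so $\Sigma$ is a codimension-$1$ submanifold of $Z$, and $\beta\wedge(d\beta)^{n-1}|_\Sigma\ne 0$.

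Next, pick a smooth cutoff $\rho:(-\delta,\delta)\to[0,1]$ with $\rho\equiv 1$ near $t=0$ and $\rho\equiv 0$ near $|t|=\delta$, and define the $b^{2k}$-form
$$\omega_t := \rho(t)\,\frac{dt}{t^{2k}} + \bigl(1-\rho(t)\bigr)\,dt,$$
which is a nowhere-vanishing section of ${^{b^{2k}}}T^*M$ on $\mathcal{N}(Z)$: at $t=0$ it equals $dt/t^{2k}$, while for $t\ne 0$ its coefficient $\rho/t^{2k}+(1-\rho)$ is strictly positive. Set $\alpha_b := u\,\omega_t + \beta$ on $\mathcal{N}(Z)$ and $\alpha_b := \alpha$ outside; these agree where $\rho=0$ and thus define a global $b^{2k}$-form on $M$. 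Since $dt/t^{2k}$ and $dt$ are both $b^{2k}$-closed, $\omega_t$ is closed, so $d\alpha_b = du\wedge \omega_t + d\beta$; expanding, and using $\omega_t\wedge\omega_t=0$ and again $\beta\wedge(d\beta)^n=0$, yields
$$\alpha_b\wedge(d\alpha_b)^n = \omega_t\wedge\Omega,$$
a nowhere-vanishing section of $\bigwedge^{2n+1}\,{^{b^{2k}}}T^*M$ on $\mathcal{N}(Z)$. Outside $\mathcal{N}(Z)$, $\alpha_b=\alpha$ is still contact, so $\alpha_b$ is a $b^{2k}$-contact form on $M$ with critical hypersurface exactly $Z$.

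Finally, at $p\in Z$ the $b^{2k}$-vector field $t^{2k}\partial_t$ vanishes as an ordinary vector field at $t=0$; thus the distribution $\xi_b=\ker\alpha_b$, of rank $2n$ inside ${^{b^{2k}}}TM$, projects to $T_pZ$ with full rank $2n$ unless $t^{2k}\partial_t\in\xi_b$, in which case the rank is $2n-1$. Since $\alpha_b(t^{2k}\partial_t)|_p = u(p)$, the rank drops precisely on $\Sigma$. Pulling $\alpha_b$ back to $T\Sigma$ gives $\beta|_\Sigma$, and from $\Omega|_\Sigma = n\,\beta\wedge du\wedge(d\beta)^{n-1}|_\Sigma\ne 0$ together with $du$ being transverse to $\Sigma$ inside $Z$, one reads off that $\beta\wedge(d\beta)^{n-1}|_\Sigma$ is a volume form on $\Sigma$, making $\beta|_\Sigma$ a contact form. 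The only delicate point is the global gluing between the singular and smooth regions, handled cleanly by the nonvanishing of $\omega_t$ throughout $\mathcal{N}(Z)$; the rest is a direct computation from the vertically invariant normal form.
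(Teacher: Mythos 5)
Your proof is correct and follows essentially the same route as the paper: write $\alpha=u\,dt+\beta$ in the vertically invariant normal form near the convex hypersurface, observe that the contact condition reduces to $u(d\beta)^n+n\beta\wedge du\wedge(d\beta)^{n-1}$ being a volume form on $Z$, and replace $dt$ by a closed, nowhere-vanishing singular $1$-form in the $t$-direction that agrees with $dt$ away from $Z$ (the paper uses $ds_\epsilon$ for an explicit primitive $s_\epsilon$ with $s_\epsilon'>0$, you use a cutoff interpolation $\rho\,dt/t^{2k}+(1-\rho)\,dt$, which is the same device). Your explicit verification of the statement about $\Sigma$ is a welcome addition, since the paper merely refers back to Section \ref{structurecriticalset} for that part.
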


 Before proving Theorem \ref{Existencetheorem}, we observe some corollaries.
	
	Using Giroux's genericity result in dimension $3$, we obtain the following corollary in dimension $3$:
	
	\begin{corollary}\label{cor:3dgeneric}
		Let $M$ be a $3$-dimensional manifold. Then for a $C^\infty$-generic surface $Z$, there exists a $b^{2k}$-contact structure on $M$ realising $Z$ as the critical set.
	\end{corollary}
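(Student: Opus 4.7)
The plan is to combine three results already available in the paper: the Martinet--Lutz existence theorem for contact structures in dimension three, Giroux's genericity theorem for convex surfaces (quoted just above the corollary), and the just-established Theorem \ref{Existencetheorem}. Each of these steps is short, and none of them requires new ideas beyond what has been developed; the content of the corollary is really the observation that they fit together cleanly.

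First, I would fix an auxiliary smooth contact structure $\xi$ on the closed orientable $3$-manifold $M$. Such a $\xi$ is provided by the Martinet--Lutz existence theorem cited in the discussion opening this section (in dimension three one does not need the full strength of \cite{bem}). Second, I would invoke Giroux's genericity theorem on $(M,\xi)$: a generic closed embedded surface $Z \subset M$, in the $C^\infty$-topology on embeddings, is convex with respect to $\xi$. Third, I would feed this convex surface into Theorem \ref{Existencetheorem}: for every integer $k \geq 1$ that theorem produces a $b^{2k}$-contact structure on $M$ whose critical hypersurface is exactly $Z$, with the rank drop happening along the dividing set $\Sigma \subset Z$.

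The only delicate point in the argument is the reading of the word \emph{generic} in the statement of the corollary: Giroux's theorem gives genericity relative to a fixed ambient contact structure, whereas the corollary speaks of a generic surface in $M$ with no contact structure yet named. This is resolved by the order of quantifiers above: we choose once and for all a contact structure $\xi$ on $M$ (which exists unconditionally by Martinet--Lutz), and from then on ``generic $Z$'' means generic with respect to this $\xi$. Since the corollary only asserts the existence of \emph{some} $b^{2k}$-contact structure realising $Z$ as critical set, this suffices. Accordingly, I expect no substantive obstacle, and would present the proof as a direct three-line concatenation of Martinet--Lutz, Giroux's convexity theorem, and Theorem \ref{Existencetheorem}.
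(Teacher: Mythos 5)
Your argument is essentially the paper's own proof: fix a contact structure on $M$ via Lutz--Martinet, apply Giroux's genericity theorem to get convexity of a generic surface, and conclude with Theorem \ref{Existencetheorem}. The only difference is that you restrict to closed (orientable) $M$, whereas the paper's statement allows arbitrary $3$-manifolds and accordingly also invokes Gromov's existence theorem to handle the open case.
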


		\begin{proof}[Proof of  {Corollary \ref{cor:3dgeneric}}]
		Using Gromov's result in the open case and Lutz--Martinet for $M$ closed, we can equip $M$ with a contact form.
		As is proved in \cite{Giroux}, a generic surface $Z$ is convex and the conclusion follows from Theorem \ref{Existencetheorem}.
	\end{proof}

    {In higher dimensions, the following holds.}

    \begin{corollary}\label{cor:highergeneric}
        {Let $M$ be an almost contact manifold of dimension $2n+1$, for $n>1$ and let $Z$ be a $C^0$-generic hypersurface in $M$. Then there exists a $b^{2k}$-contact structure on $M$ realising $Z$ as the critical set.}
    \end{corollary}

    \begin{proof}[Proof of Corollary \ref{cor:highergeneric}]
        {By the $h$-principle for contact structures in higher dimensions, as proved in \cite{bem}, the almost contact stucture can be homotoped to a contact structure. By Theorem 1.1 in \cite{ep}, a $C^0$-generic hypersurface is convex. We can thus apply Theorem \ref{Existencetheorem}.}
    \end{proof}

	\textcolor{black}{This corollary can be improved: instead of changing the given surface to a convex one but fixing the contact structure, we will fix the given surface and change the contact structure to realize the fixed surface as a convex \textcolor{black}{surface}. Theorem \ref{Existencetheorem} will then imply the existence of a $b^{2k}$-contact structure whose critical hypersurface is given by the given surface.}
	
	\begin{corollary}\label{cor:3dfixed}
	{Let $M$ be an almost contact manifold and $Z\subset M$ be a hypersurface. Then there exists a $b^{2k}$-contact structure on $M$ realising $Z$ as the critical set.}
	\end{corollary}

 			\begin{proof}[Proof of Corollary \ref{cor:3dfixed}]
		By \cite{bem}, we can equip $M$ with a contact structure $\xi=\ker \alpha$.
  		As is proved in \cite{Giroux} in the $3$-dimensional case (respectively \cite{ep} in the higher dimensional case), a $C^\infty$-small (respectively a $C^0$-small) perturbation of the hypersurfacesurface $Z$ is  convex and let us denote this perturbation by $Z_1$. Consider the isotopy $\varphi_t:M\to M$ such that $\varphi_1(Z)=Z_1$. This isotopy is supported in a tubular neighbourhood around the critical set $Z$. Consider the contact structure given by the kernel of $\varphi_1^*\alpha$. For this contact structure, the initial surface $Z$ is convex. The result now follows by applying Theorem	\ref{Existencetheorem} to {the} contact structure $(M,\ker \varphi^*\alpha)$ and the convex {hypersurface} $Z$.
	\end{proof}
	
%			\begin{proof}[Proof of the Corollary]
%		\textcolor{black}{Using Gromov's result in the open case and Lutz--Martinet for $M$ closed, we can equip $M$ with a contact structure $\xi=\ker \alpha$.
%		As is proved in \cite{Giroux}, a $C^\infty$-small perturbation of the surface $Z$ is  convex and let us denote this perturbation by $Z_1$. Consider the isotopy $\varphi_t:M\to M$ such that $\varphi_1(Z)=Z_1$. This isotopy is supported in a tubular neighbourhood around the critical set $Z$. Consider the contact structure given by the kernel of $\varphi_1^*\alpha$. For this contact structure, the initial surface $Z$ is convex. The result now follows by applying Theorem	\ref{Existencetheorem} to contact structure $(M,\ker \varphi^*\alpha)$ and the convex surface $Z$.}
%	\end{proof}

 We will now proceed to the proof of Theorem \ref{Existencetheorem}.

	\begin{proof}[Proof of Theorem \ref{Existencetheorem}]
		Using the transverse contact vector field, we find a tubular neighbourhood of $Z$ diffeomorphic to $Z \times \mathbb{R}$ such that the contact structure is defined by the contact form $\alpha= u dt + \beta$, where $t$ is the coordinate on $\mathbb{R}$, $u \in C^\infty(Z)$ and $\beta \in  \Omega^1(Z)$. Note that in general, $\alpha$ is multiplied by a non-vanishing function $g$ that is not vertically invariant. As $g$ is non-vanishing, the form divided by the function gives a contact form defining the same contact structure. The non-integrability condition then is equivalent of saying that {$u(d\beta)^n+n\beta\wedge du \wedge d\beta$} is a volume form on $Z$. We will change the contact form to a $b^{2k}$-contact form.
		
		Take $\epsilon>0$. Let us take a function $s_\epsilon$ (that is smooth outside of $x=0$) such that
		\begin{enumerate}
			\item $s_\epsilon(x)=x$ for $x\in \R\setminus [-2\epsilon,2\epsilon]$,
			\item$s_\epsilon(x)=-\frac{1}{x^{2k-1}}$ for $x\in  [-\epsilon,0[ \cup ]0,\epsilon]$,
			\item $s_\epsilon'(x)>0$ for all $x \in \R$.
		\end{enumerate}
		Now consider $\alpha_\epsilon=uds_\epsilon+\beta$. By construction, $\alpha_\epsilon$ is a $b^{2k}$-form that coincides with $\alpha$ outside of $Z \times (\R \setminus [-2\epsilon,2\epsilon])$. Furthermore, $\alpha_\epsilon$ satisfies the non-integrability condition on $Z \times ]-2\epsilon,2\epsilon[$ because $s'_\epsilon >0$.
		
		The rest of the statement follows from the discussion of Section \ref{structurecriticalset}.
	\end{proof}

	\begin{remark}
		Note that there are many different choices for the function $s_\epsilon$ yielding the same result: the function $s_\epsilon$ only needs to allow singularities of the right order and have positive derivative. Given a convex contact form $\alpha$, we call $(M,\alpha_\epsilon)$ the $s_\epsilon$-singularization.
	\end{remark}

	This proof only works for $b^m$-contact structures where $m$ is even because it is essential that $s_\epsilon'>0$.
	In the case where the complimentary set of the convex hypersurface is connected, the contact condition obstructs the existence of $b^{2k+1}$-contact structures on $M$ having $Z$ as critical set. This is because the contact condition induces an orientation on the manifold, whereas in the $b^{2k+1}$-contact case, the orientation changes when crossing the critical set. The same holds for symplectic surfaces: see for example \cite{MP} where this orientability issues were formulated using colorable graphs.
	
	\begin{lemma}\label{lem:orientability}
	Let $M$ be an orientable manifold with $Z$ a hypersurface such that $M \setminus Z$ is connected. Then there exist no  $b^{2k+1}$-contact form with critical set $Z$.
	\end{lemma}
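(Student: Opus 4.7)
The plan is to exploit the parity of $m=2k+1$: a $b^{2k+1}$-contact form $\alpha$ induces on $M\setminus Z$ a smooth top-degree form whose sign flips as one crosses $Z$, and this is incompatible with orientability together with connectedness of the complement.

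First I would observe that the $b^{2k+1}$-contact condition produces a nowhere-vanishing section of $\Lambda^{2n+1}({^{b^{2k+1}}}T^*M)$. Away from $Z$ this bundle canonically coincides with $\Lambda^{2n+1}(T^*M)$, so the restriction of $\alpha\wedge(d\alpha)^n$ to $M\setminus Z$ is a smooth nowhere-vanishing top form. Picking a smooth volume form $\Omega$ on $M$ (which exists by orientability), I can write $\alpha\wedge(d\alpha)^n=h\,\Omega$ on $M\setminus Z$ with $h\in C^\infty(M\setminus Z)$ nowhere zero; connectedness of $M\setminus Z$ then forces $h$ to have constant sign.

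Next I would carry out the local analysis near $Z$. In coordinates $(z,x_1,\ldots,x_{2n})$ around a point of $Z$ with $Z=\{z=0\}$, I would use Lemma \ref{decomposition} to write $\alpha=u\,\tfrac{dz}{z^{2k+1}}+\beta$ and expand $\alpha\wedge(d\alpha)^n$. The dominant term as $z\to 0$ has the form
\[
\frac{A(z,x)}{z^{2k+1}}\,dz\wedge dx_1\wedge\cdots\wedge dx_{2n},
\]
where $A(0,x)\neq 0$ is precisely the nondegeneracy enforced by the $b^{2k+1}$-contact condition on $Z$ (the remaining contributions are smooth, hence of strictly lower order in $z^{-1}$ and irrelevant to the sign near $Z$). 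Since $2k+1$ is odd, $z^{-(2k+1)}$ flips sign across $Z$; shrinking the chart so that both $A$ and the local expression of $\Omega$ retain constant sign, the function $h$ changes sign across $Z$ within this neighbourhood, contradicting the global constant sign.

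The main technical step is isolating the leading term and verifying that its coefficient does not vanish on $Z$; this is a direct algebraic computation using $(\tfrac{dz}{z^{2k+1}})^2=0$ together with the observation that any smooth $(2n+1)$-form, when expressed in the $b^m$-basis $\tfrac{dz}{z^{2k+1}}\wedge dx_1\wedge\cdots\wedge dx_{2n}$, picks up a factor $z^{2k+1}$ and hence vanishes on $Z$. I do not foresee any further obstacles, and the argument is insensitive to the precise form of $\beta$.
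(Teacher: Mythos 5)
Your proposal is correct and follows essentially the same route as the paper: both arguments reduce the contact condition near $Z$ to a leading term of the form $\frac{dz}{z^{2k+1}}\wedge\nu$ with nonvanishing coefficient, observe that the odd power forces $\alpha\wedge(d\alpha)^n$ to change sign across $Z$, and derive a contradiction with the constant sign imposed by connectedness of $M\setminus Z$ and a global volume form. Your write-up merely spells out more explicitly the step the paper leaves implicit, namely that smooth top forms contribute terms vanishing on $Z$ in the $b^{2k+1}$-basis, so the singular term genuinely dominates.
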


\begin{proof}
	Assume by contradiction that there is a $b^{2k+1}$-contact form. Let $z$ be a defining function for the critical set. The contact condition writes down as $\frac{dz}{z^{2k+1}} \nu$, where $\nu$ is volume form on $M$. This expression has opposite signs on either side of $Z$. As $M\setminus Z$ is connected, $\alpha \wedge (d\alpha)^n$ must vanish in $M \setminus Z$, which is in contradiction with the contact condition.
\end{proof}

	To overcome this orientability issue, we prove existence of $b^{2k+1}$-contact structures with two disjoint critical sets contained in a tubular neighbourhood of a given convex hypersurface.
	
	\begin{theorem}\label{Existencetheoremb2k+1}
		Let $(M,\xi)$ be a contact manifold and let $Z$ be a convex hypersurface in $M$. Then $M$ admits a $b^{2k+1}$-contact structure for all $k$ that has two diffeomorphic connected components $Z_1$ and $Z_2$ as critical set. The codimension $2$ submanifold $\Sigma$ corresponds to the set where the rank of the distribution drops and the induced structures is contact. Additionally, one of the hypersurfaces can be chosen to be $Z$.
	\end{theorem}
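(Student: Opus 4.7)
The strategy parallels that of Theorem \ref{Existencetheorem} but produces two critical hypersurfaces rather than one, in order to reconcile the orientation obstruction encoded in the preceding lemma. Using convexity of $Z$, fix a tubular neighbourhood $Z \times (-a, a) \subset M$ of $Z$ on which $\xi$ is defined by a contact form $\alpha = u\,dt + \beta$ (after dividing the vertically invariant contact form by its non-vanishing conformal factor), where $u \in C^\infty(Z)$, $\beta \in \Omega^1(Z)$, and $V := u(d\beta)^n + n\beta \wedge du \wedge (d\beta)^{n-1}$ is a volume form on $Z$. Fix $0 < \epsilon < a/3$ and a small $\eta > 0$. The goal is to build a $b^{2k+1}$-contact form $\alpha_\epsilon = u\,ds_\epsilon + \beta$ on $Z \times (-a,a)$ that agrees with $\alpha$ outside $Z \times [-\eta, \epsilon + \eta]$, with singular set $Z_1 \cup Z_2 := (Z \times \{0\}) \sqcup (Z \times \{\epsilon\})$, and then to extend by $\alpha$ on the rest of $M$.

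The central object is a function $s_\epsilon : \R \to \R$, smooth on $\R \setminus \{0, \epsilon\}$, built piecewise. On the middle region $(0, \epsilon)$ set $s_\epsilon(t) = c_0/t^{2k} - c_1/(t - \epsilon)^{2k}$ for positive constants $c_0, c_1$; a direct computation then gives
\[
s'_\epsilon(t) = -\frac{2kc_0}{t^{2k+1}} + \frac{2kc_1}{(t - \epsilon)^{2k+1}},
\]
which is strictly negative throughout $(0, \epsilon)$ and has a pole of exact order $2k+1$ at each endpoint. Outside $[-\eta, \epsilon + \eta]$ set $s_\epsilon(t) = t$. On the transition intervals $(-\eta, 0)$ and $(\epsilon, \epsilon + \eta)$ one smoothly interpolates between these regimes using cut-off functions, arranging that $s'_\epsilon$ remains strictly positive there while still inheriting the pole of order $2k+1$ from the middle formula at the corresponding endpoint. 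That this interpolation is compatible hinges on the odd order of the poles: the sign of $s'_\epsilon$ is necessarily reversed across each pole, so the signs on the outer intervals $(-\infty, 0)$ and $(\epsilon, \infty)$ are automatically consistent with both $s'_\epsilon = 1 > 0$ far away and $s'_\epsilon < 0$ in the middle.

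Setting $\alpha_\epsilon = u\,ds_\epsilon + \beta$, and using that $u$ and $\beta$ are pulled back from $Z$, one computes
\[
\alpha_\epsilon \wedge (d\alpha_\epsilon)^n = s'_\epsilon(t)\, dt \wedge V.
\]
Near each singular hypersurface $s'_\epsilon(t)\,dt$ equals a non-vanishing smooth function times $dz/z^{2k+1}$ in the defining coordinate $z$ and so is a nowhere vanishing $b^{2k+1}$-one-form; away from those hypersurfaces it is a smooth, nowhere vanishing form. Therefore $\alpha_\epsilon \wedge (d\alpha_\epsilon)^n$ is a non-vanishing $b^{2k+1}$-volume form, verifying the $b^{2k+1}$-contact condition on the tubular neighbourhood, while outside $\alpha_\epsilon = \alpha$ is smooth and contact. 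The two components $Z_1 = Z$ and $Z_2 = Z \times \{\epsilon\}$ are diffeomorphic by construction, and the claim about $\Sigma$ and the contact structure it inherits follows from the analysis of Section \ref{structurecriticalset}.

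The main obstacle is exactly the orientation issue recorded in the lemma above: for a $b^{2k+1}$-contact form, $\alpha \wedge (d\alpha)^n$ has opposite signs on the two sides of any singular hypersurface, which is inconsistent with a fixed global orientation on a connected complement. The construction circumvents this by inserting a second copy of $Z$ nearby so that the intermediate region $(0, \epsilon)$ carries a smooth contact structure of the reverse orientation, and the two sign flips across $\{0\}$ and $\{\epsilon\}$ compensate; choosing this middle region with $s'_\epsilon < 0$ is the essential new ingredient compared to Theorem \ref{Existencetheorem}.
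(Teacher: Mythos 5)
Your proof is correct and takes essentially the same route as the paper: both replace the vertically invariant contact form $u\,dt+\beta$ in a tubular neighbourhood of $Z$ by $u\,ds_\epsilon+\beta$, where $s_\epsilon$ has two poles of order $2k+1$ and nowhere-vanishing derivative, so that the two orientation reversals across the odd-order singularities cancel. Your version is in fact somewhat more explicit than the paper's (which uses an odd $s_\epsilon$ with poles at $t=\pm 3\epsilon/8$ and merely stipulates $s_\epsilon'\neq 0$): you give a closed formula on the middle region, verify the forced sign change of $s_\epsilon'$ there, and place one pole at $t=0$ so that $Z_1=Z$ on the nose rather than invoking flexibility in the choice of $s_\epsilon$.
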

	
	\begin{proof}
		The proof follows from the same considerations as before, but replacing the vertically invariant contact form $\alpha$ defining the contact $\xi$ by $\alpha_\epsilon= uds_\epsilon+\beta$, where $s_\epsilon: ]-\epsilon,\epsilon[ \to \R$ is given by
		\begin{itemize}
			\item $s_\epsilon(t)= |t|$ for $|t| \in [3\epsilon/4,\epsilon]$,
			\item $s_\epsilon(t)= \log|t-3\epsilon/8|$ for $|t| \in [\epsilon/4,\epsilon/2]$ if $m=1$,
			\item $s_\epsilon(t)= \frac{1}{2k(x-3\epsilon/8)^{2k}}$ for $|t| \in [\epsilon/4,\epsilon/2]$ if $m=2k+1 \neq 1$,
			\item $s_\epsilon$ is odd, i.e. $s_\epsilon(-t)= -s_\epsilon(t)$,
			\item $s_\epsilon'(t) \neq 0$.
		\end{itemize}
	As before, $s_\epsilon' \neq 0$ assures that $\alpha_\epsilon$ is a $b^{2k+1}$-contact form. As any other function with non-vanishing derivative and the right order of singularities gives rise to a $b^{2k+1}$-contact form, one of the two hypersurfaces can be chosen to be the initial convex hypersurface.
	\end{proof}

	\begin{remark}\label{rmk2criticalsets}
		Given a contact manifold with a convex hypersurface such that the complementary set of the hypersurface is not connected, it may, in some particular cases, also be possible to construct a $b^{2k+1}$-contact form admitting a unique connected component as critical set. This is related to extending a given contact form in a neighbourhood of a contact manifold with boundary to a globally defined contact form. More precisely, let $\alpha$ be the contact form. In a tubular neighbourhood around the convex hypersurface, we replace as before $\alpha=udt+\beta$ by $\alpha_\epsilon= uds_\epsilon+\beta$  where $s_\epsilon$ is given by
		\begin{itemize}
			\item $s_\epsilon(t)=t$ for $t>2\epsilon$,
			\item $s_\epsilon(t)=\log t$ for $0<t<\epsilon$,
			\item $s_\epsilon'(t)>0$ for $t>0$,
			\item $s_\epsilon$ is even, i.e. $s_\epsilon(-t)=s_\epsilon(t)$.
		\end{itemize}
		The form $\alpha_\epsilon$ is a $b^{2k+1}$-contact form that agrees with $\alpha$ for $t>2\epsilon$. However, it does not agree with $\alpha$ for $t<-2\epsilon$ and in fact, it may not always be possible to extend $\alpha_\epsilon$. \textcolor{black}{This is related to the following question: Given a $3$-manifold and a hypersurface $Z$ defined by a globally defined defining function $f$ (i.e. different from the on in Lemma \ref{lem:orientability}), is it always possible to construct a $b^{2k+1}$-contact structure realizing $Z$ as a critical set?}
	\end{remark}

		\begin{remark}\label{rmk:convexbcontact}
	Recall that a $b^m$-contact form $\alpha=u\frac{dz}{z^m}+\beta$ is convex if $u\in C^\infty(Z)$ and $\beta \in \Omega^1(Z)$, see Definition \ref{def:bcontactconvex}. Note that the $s_\epsilon$-singularization of is by construction a convex $b^m$-contact manifold.
	\end{remark}
	
	As a corollary of Theorem \ref{Existencetheoremb2k+1}, we will prove that any \textcolor{black}{coorientable} contact manifold is folded contact.

	\begin{corollary}\label{cor:existencefoldedcontact}
	Let $Z$ be a convex hypersurface in a coorientable contact manifold $(M,\ker \alpha)$. Then $M$ admits a folded-contact form that has two connected components $Z_1$ and $Z_2$ as folded hypersurface, diffeomorphic to $Z$. In particular in dimension $3$, a surface $Z\subset M$ can be realized as a connected component of the folded hypersurface of a folded contact form.
	\end{corollary}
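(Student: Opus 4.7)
The plan is to combine Theorem \ref{Existencetheoremb2k+1} with Theorem \ref{thm:desingularizationcontactb2k+1}. First, I would invoke Theorem \ref{Existencetheoremb2k+1} with $m=1$ to obtain a $b$-contact form $\alpha'$ on $M$ whose critical set consists of two connected components $Z_1, Z_2$, each diffeomorphic to $Z$, with $Z_1 = Z$ by the last clause of that theorem. Moreover, by the explicit $s_\epsilon$-singularization construction, $\alpha'$ agrees with the given contact form $\alpha$ outside a tubular neighborhood of $Z$, and inside that neighborhood is of the form $u\,ds_\epsilon + \beta$ with $u \in C^\infty(Z)$ and $\beta \in \Omega^1(Z)$. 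By Remark \ref{rmk:convexbcontact}, $\alpha'$ is therefore convex, and in particular almost convex in the sense of Definition \ref{def:bcontactconvex}.

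Next, since almost convexity is precisely the hypothesis of Theorem \ref{thm:desingularizationcontactb2k+1}, I would apply the $f_\epsilon$-desingularization to $\alpha'$ separately in a small enough neighborhood of each $Z_i$ (small enough that the two neighborhoods are disjoint). This replaces the local expression $u\,ds_\epsilon + \beta$ near $Z_i$ by $u\,df_\epsilon^{(i)} + \beta$, where $f_\epsilon^{(i)}$ is the even function from the proof of Theorem \ref{thm:desingularizationcontactb2k+1} and its derivative $(f_\epsilon^{(i)})'$ vanishes transversally at $Z_i$. The resulting smooth $1$-form $\alpha_\epsilon$ coincides with $\alpha$ outside the tubular neighborhood of $Z$, and inside each critical neighborhood satisfies
\[
\alpha_\epsilon\wedge (d\alpha_\epsilon)^n \;=\; (f_\epsilon^{(i)})'(z_i)\, dz_i \wedge \bigl(u(d\beta)^n + n\,\beta\wedge du\wedge (d\beta)^{n-1}\bigr),
\]
where the second factor is a volume form on $Z_i$, being equivalent to the original contact condition transported to the convex model. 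Since $(f_\epsilon^{(i)})'$ has a simple zero at $z_i=0$, the top form $\alpha_\epsilon \wedge (d\alpha_\epsilon)^n$ cuts the zero section of $\bigwedge^{2n+1} T^*M$ transversally precisely along $Z_1\sqcup Z_2$. Hence $\alpha_\epsilon$ is a folded contact form in the sense of Definition \ref{def:foldedcontact}, with folding hypersurface $Z_1 \sqcup Z_2$ having two connected components, each diffeomorphic to $Z$, one of which is $Z$ itself.

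For the three-dimensional part of the statement, a generic closed surface $Z$ in a contact $3$-manifold is convex by Giroux's genericity theorem recalled above, so the construction applies verbatim and realizes the generic $Z$ as a connected component of the folding hypersurface of the produced folded contact form.

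The only delicate point — really the only thing to check — is that the transversality of $(f_\epsilon^{(i)})'$ at the origin is inherited by the global top form $\alpha_\epsilon\wedge(d\alpha_\epsilon)^n$; this reduces to the explicit local computation displayed above together with non-vanishing of $u(d\beta)^n + n\beta\wedge du\wedge(d\beta)^{n-1}$ on $Z$, which is just the original contact condition for $\alpha=u\,dt+\beta$ in the convex neighborhood.
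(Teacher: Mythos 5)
Your proposal is correct and follows essentially the same route as the paper: apply Theorem \ref{Existencetheoremb2k+1} (for $m=1$) to realize $Z$ as one of two critical components of a $b$-contact form, note convexity via Remark \ref{rmk:convexbcontact}, and then desingularize with Theorem \ref{thm:desingularizationcontactb2k+1} to obtain the folded contact form, with the $3$-dimensional genericity following from Giroux as in Corollary \ref{cor:3dgeneric}. The extra local transversality computation you include is just an unpacking of the proof of Theorem \ref{thm:desingularizationcontactb2k+1} and is consistent with it.
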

	
	\begin{proof}
	The proof is a direct application of the existence theorem for $k=1$ and the desingularization theorem. First, by Theorem \ref{Existencetheoremb2k+1}, the hypersurface $Z$ can be realized as one of two connected components of the critical set of a $b$-contact structure. As the obtained $b$-contact form is convex, see Remark \ref{rmk:convexbcontact}, we then use the desingularization theorem (Theorem \ref{thm:desingularizationcontactb2k+1}) to obtain a folded contact structure.
	
	The statement in dimension $3$ follows from Corollary \ref{cor:3dfixed}.
	\end{proof}
	
	\begin{remark}
	It follows from \cite{bem} that a necessary and sufficient condition for a manifold to admit a contact structure is that it is almost contact. It would be interesting to ask whether the almost contact condition can be relaxed to prove the existence of $b^m$-contact structures on closed manifolds. For an example it is well known that $SU(3)/SO(3)$ does not admit a contact structure, see \cite{Geiges}. Another indication for this is given by examples of cooriented $b^m$-contact structures on non-orientable manifolds (see Example \ref{ex:nonorientable}).
	\end{remark}

\section{Open problems}

	We finish this article by mentioning open problems concerning $b^m$-contact manifolds that will be the content of an upcoming paper. In this article, we did not consider the study of the dynamical properties of the Reeb vector field associated to a $b^m$-contact form. A natural question that arises is whether or not its flow always admits periodic orbits. In the case of a positive answer, do those orbit always exist away from the critical set or on the critical set? Those are generalizations of the well-known Weinstein conjecture in contact geometry and is discussed in \cite{MO2}.
	
	This is particularly interesting bearing in mind applications to celestial mechanics. As was mentioned in Section \ref{sec:motivating}, it has been proved that $b^m$-symplectic structures appear naturally in the study of celestial mechanics as for example the restricted three body problem, see \cite{BDMOP,dkm,kms}. As $b^m$-contact manifolds appear as \textcolor{black}{particular cases} of level-sets of Hamiltonian functions and the Reeb flow is a reparametrization of the Hamiltonian flow on this level-set, results in the direction of the before-mentioned generalization of Weinstein conjecture are of great importance in the study of celestial mechanics. These lines of research are carried out in \cite{MO2} and may put some new light in studying the contact geometry of the restricted three body problem, in a similar vein as in \cite{contactmoon}. The dynamical properties of the $b$-Reeb field is narrowly related to the ones of Beltrami vector fields on manifolds with boundary as it is shown in \cite{MOP}.

	\appendix

	\section{Local model of Jacobi manifolds}\label{app:localJacobi}
	
	We recall local structure theorems of Jacobi manifolds, proved in \cite{DLM}. Let us first introduce some notation. Consider $\mathbb{R}^{2q+1}$ with coordinates $(x_0,\dots,x_{2q})$. Following \cite{DLM}, we put
	
	\begin{itemize}
		\item $\Lambda_{2q}= \sum_{i=1}^q \frac{\partial}{\partial x_{i+q}}\wedge \frac{\partial}{\partial x_i}$,
		\item $Z_{2q}= \sum_{i=1}^q x_{i+q}\frac{\partial}{\partial x_{i+q}}$,
		\item $R_{2q+1}= \frac{\partial}{\partial x_0}$,
		\item $\Lambda_{2q+1}= \sum_{i=1}^q(x_{i+q}\frac{\partial}{\partial x_0}-\frac{\partial}{\partial x_i})\wedge \frac{\partial}{\partial x_{i+q}}$.
	\end{itemize}
	\begin{theorem}[\cite{DLM}]
		Let $(M^m,\Lambda,R)$ be a Jacobi manifold, $x_0$ a point of $M$ and $S$ be the leaf of the characteristic foliation going through $x_0$.
		
		If $S$ is of dimension $2q$, then there exist a neighbourhood of $x_0$ that is diffeomorphic to $U_{2q} \times N$ where $U_{2q}$ is an open neighbourhood containing the origin of $\R^{2q}$ and $(N,\Lambda_N,R_N)$ is a Jacobi manifold of dimension $m-2q$. The diffeomorphism preserves the Jacobi structure, where the Jacobi structure on $U_{2q} \times N$ is given by
		$$R_{U_{2q} \times N}= \Lambda_N, \qquad R_{U_{2q} \times N}= \Lambda_{2q}+\Lambda_N-Z_{2q}\wedge R_N.$$
		
		If $S$ is of dimension $2q+1$, then there exist a neighbourhood of $x_0$ that is diffeomorphic to $U_{2q+1} \times N$ where $U_{2q+1}$ is an open neighbourhood containing the origin of $\R^{2q+1}$ and $(N,\Lambda_N,R_N)$ is a homogeneous Poisson manifold of dimension $m-2q-1$. The diffeomorphism preserves the Jacobi structure, where the Jacobi structure on $U_{2q} \times N$ is given by
		$$R_{U_{2q+1} \times N}= R_{2q+1}, \qquad \Lambda_{U_{2q+1} \times N}= \Lambda_{2q+1}+\Lambda_N+E_{2q+1}\wedge Z_N.$$
	\end{theorem}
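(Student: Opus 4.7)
My plan is to reduce this local normal form to Weinstein's splitting theorem for Poisson manifolds via the Poissonization construction. Given $(M,\Lambda,R)$, I form the Poisson manifold $(\tilde M = M\times \mathbb{R}_\tau,\, \Pi = e^{-\tau}(\Lambda + \partial_\tau \wedge R))$, which carries the homogeneity vector field $T = \partial_\tau$ satisfying $\mathcal{L}_T \Pi = -\Pi$. A direct rank computation shows that at $(x_0,0)$ the rank of $\Pi$ equals $2q$ when $R|_{x_0}\in \mathrm{Im}\,\Lambda^{\#}|_{x_0}$ (the even leaf case) and $2q+2$ when $R|_{x_0}\notin \mathrm{Im}\,\Lambda^{\#}|_{x_0}$ (the odd leaf case), so the two situations of the theorem correspond to the two possibilities for the symplectic leaf of $\Pi$ through $(x_0,0)$.

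Next I would apply an equivariant version of Weinstein's splitting theorem adapted to $T$. Concretely, I would first straighten $T$ using its flow so it becomes $\partial_\tau$ in a chart, then perform Weinstein's splitting while keeping both the symplectic factor and the transverse slice $T$-invariant. In the odd leaf case, $T$ ends up tangent to the symplectic factor, and rewriting the symplectic form in Liouville-type coordinates dictated by the relation $\mathcal{L}_T \Pi = -\Pi$ produces precisely the model $\Lambda_{2q+1}$ together with $R_{2q+1}=\partial_{x_0}$; the transverse Poisson factor remains $T$-invariant, and the resulting coupling term $E_{2q+1}\wedge Z_N$ comes from the cross term between $T$ and the homogeneous vector field $Z_N$ on the transverse slice $(N,\Lambda_N,Z_N)$. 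In the even leaf case, $T$ is transverse to the symplectic factor; factoring out the $T$-direction reduces the transverse Poisson structure to a lower-dimensional Jacobi structure on $N$, and the Liouville-type vector field $Z_{2q}$ appears from the rescaling forced by the $e^{-\tau}$ weight on the symplectic factor, producing the coupling $-Z_{2q}\wedge R_N$.

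The principal obstacle is arranging that Weinstein's splitting be compatible with the homogeneity $T$: Weinstein's theorem yields a splitting, but not automatically one whose factors are $T$-invariant. Overcoming this requires either an equivariant Moser-type deformation argument to correct the splitting coordinates by a $T$-preserving Poisson isotopy, or a careful construction of the transverse slice using $T$-invariant Casimir functions of $\Pi$ near $(x_0,0)$. Once the splitting is made $T$-equivariant, restricting to $\{\tau=0\}$ recovers $(M,\Lambda,R)$ and the explicit models stated in the theorem follow by direct identification of the split factors.
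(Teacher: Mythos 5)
First, a point of comparison: the paper itself gives no proof of this statement. It is quoted as background from Dazord--Lichnerowicz--Marle \cite{DLM} (it is used in the proof of Proposition \ref{bJacobicorrespondance}), so the only thing to measure your sketch against is the original argument, which does indeed pass through homogeneous Poisson manifolds. Your reduction is therefore the right one in spirit: form the Poissonization $\Pi=e^{-\tau}(\Lambda+\partial_\tau\wedge R)$ with homogeneity field $T=\partial_\tau$, and observe that the even/odd dichotomy for the Jacobi leaf through $x_0$ corresponds exactly to whether $T$ is transverse or tangent to the symplectic leaf of $\Pi$ through $(x_0,0)$; your rank count ($2q$ when $R_{x_0}\in\mathrm{Im}\,\Lambda^{\#}_{x_0}$, $2q+2$ otherwise) is correct.

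The genuine gap is the step you yourself flag and then leave unresolved: making the splitting compatible with $T$. Note that $T$ is \emph{not} an infinitesimal symmetry of $\Pi$; it satisfies $\mathcal{L}_T\Pi=-\Pi$, so no off-the-shelf ``equivariant Weinstein splitting'' (which presupposes invariance) applies, and an ``equivariant Moser-type deformation'' would have to intertwine a conformal rescaling rather than an invariance --- this is precisely where all the work lies. Your alternative suggestion, building the transverse slice out of $T$-invariant Casimir functions, does not work: near $(x_0,0)$ the transverse Poisson structure vanishes at the base point but is generically nontrivial nearby, so $\Pi$ need not admit any nonconstant local Casimirs, and in Weinstein's construction the slice is the common zero locus of the leafwise coordinates $p_i,q_i$ obtained from Hamiltonian flows, not of Casimirs. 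What is actually required is a splitting theorem for \emph{homogeneous} Poisson manifolds in which each conjugate pair of coordinates is chosen homogeneous of a prescribed degree with respect to $T$ (so that $T$ itself acquires the normal forms $Z_{2q}$, respectively $E_{2q+1}$ and $Z_N$, appearing in the statement); this is the main technical content of \cite{DLM} and is of essentially the same depth as the theorem being proved. Until that lemma is supplied --- e.g.\ by redoing Weinstein's inductive construction while choosing each Hamiltonian coordinate function homogeneous under $T$ --- your argument is a correct roadmap rather than a proof.
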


\end{document}